\def\XXint#1#2#3{{\setbox0=\hbox{$#1{#2#3}{\int}$ }
		\vcenter{\hbox{$#2#3$ }}\kern-.6\wd0}}
\newtheorem{theorem}{Theorem}[section]
\newtheorem{lemma}[theorem]{Lemma}
\newtheorem{definition}[theorem]{Definition}
\newtheorem{proposition}[theorem]{Proposition}
\numberwithin{equation}{section}
\newenvironment{proof}[1][Proof]{\noindent\textbf{#1.} }{\hfill $\Box$}
\makeatletter\setlength{\textwidth}{16cm}
\begin{document}
	
\title{Global well-posedness of the Navier--Stokes equations and the Keller--Segel system in variable Fourier--Besov spaces\footnote{E-mails: gvh@westlake.edu.cn, jihzhao@163.com.}}
\author{Gast\'on Vergara-Hermosilla$^{\text{1}}$, Jihong Zhao$^{\text{2}}$\\
[0.2cm] {\small $^{\text{1}}$ Institute for Theoretical Sciences, Westlake University, Hangzhou, Zhejiang 310024, China.}\\
[0.2cm] {\small $^{\text{2}}$School of Mathematics and Information Science, Baoji University of Arts and Sciences,}\\
[0.2cm] {\small  Baoji, Shaanxi 721013,  China}}

\date{\today}
\maketitle
\begin{abstract}
In this paper, we study the Cauchy problem of the classical incompressible Navier--Stokes equations and the parabolic-elliptic Keller--Segel system in the framework of the Fourier--Besov spaces with variable regularity and integrability indices. By fully using some basic properties of these variable function spaces, we establish the linear estimates in variable Fourier--Besov spaces for the heat equation. Such estimates are fundamental for solving certain PDE's of parabolic type. As an applications, we prove global well-posedness in variable Fourier--Besov spaces for the 3D classical incompressible Navier--Stokes equations and the 3D parabolic-elliptic Keller--Segel system.
\end{abstract}
\smallbreak

\textit{Keywords}: Variable Fourier--Besov spaces; Navier--Stokes equations; Keller--Segel system; global well-posedness
\smallskip

\textit{2020 AMS Subject Classification}:   35A01,  35Q30, 46E30,  92C17

\section{Introduction}

 \smallbreak

In this paper, we are concerned with the analysis of systems of partial differential equations of parabolic type, by considering as functional framework some
 Fourier--Besov spaces with variable regularity and integrability indices. More precisely, by applying some of the main properties of these functional spaces, we are able to establish global well-posedness theorems for the classical  3D incompressible Navier--Stokes equations, in the context fluid dynamics, and the 3D elliptic-parabolic Keller--Segel system, in the context of mathematical biology. In the following, we motivate and present these main results.

\subsection{The 3D incompressible Navier--Stokes equations}
The 3D incompressible Navier--Stokes equations read
\begin{equation} \label{eq1.1}
\left\{
\begin{aligned}
& \partial_t u -\Delta u + u\cdot\nabla u +\nabla \pi =f,  \\
& \nabla\cdot u=0,\\
& u(0,x) =u_0(x).
\end{aligned}
\right.
\end{equation}
Here $u$ is a vector field representing the velocity of the fluid,  $\pi$ is a scalar denoting the pressure, $u_{0}$ is a given initial velocity with compatibility condition $\nabla\cdot u_{0}=0$ and $f$ is a given external force.
\smallbreak

It is well-known that the Navier--Stokes equations \eqref{eq1.1} has a global smooth solution if the initial data is small enough in the scale invariant spaces. Let us recall that the scaling of the Navier--Stokes equations \eqref{eq1.1} is following: if $u$ solves \eqref{eq1.1} with initial data $u_{0}$ and $f$, so does for any $\lambda>0$ the vector field $u_{\lambda}:(t,x)\mapsto\lambda u(\lambda^{2}t,\lambda x)$ with initial data $u_{0, \lambda}$ and $f_{\lambda}$, where
\begin{equation*}
 u_{0, \lambda}: x\mapsto\lambda u_{0}(\lambda x) \ \ \ \text{and}\ \ \  f_{\lambda}:(t,x)\mapsto\lambda^{3} f(\lambda^{2}t,\lambda x).
\end{equation*}
Clearly,  the following function spaces are scale invariant for initial data:
$$
  \dot{H}^{\frac{1}{2}}(\mathbb{R}^{3})\subseteq L^{3}(\mathbb{R}^{3})\subseteq \dot{B}^{-1+\frac{3}{p}}_{p,q}(\mathbb{R}^{3})\subseteq BMO^{-1} \subseteq\dot{B}^{-1}_{\infty,\infty} (\mathbb{R}^{3}), \ \ 3\leq p\leq\infty, \ 1\leq q\leq 2.
$$
 The well-posed result in $\dot{H}^{\frac{1}{2}}(\mathbb{R}^{3})$ is due to  Fujita--Kato \cite{FK64} (see also \cite{L34}, where the smallness of $u_{0}$ is measured by $\|u_{0}\|_{L^2}\|\nabla u_{0}\|_{L^2}$). Since then, a number of works have been devoted to establishing similar well-posed results for larger class of initial data. Kato \cite{K84} proved the global well-posedness for small
data and local well-posedness for large data in the critical Lebesgue space $L^{3}(\mathbb{R}^{3})$. Later Cannone \cite{C97} and Planchon \cite{P96} proved the same results for initial data in the critical Besov spaces $\dot{B}^{-1+\frac{3}{p}}_{p,\infty}(\mathbb{R}^{3})$ (see also the different proof by Chemin \cite{C92}), and Koch--Tataru \cite{KT01} for initial data in $BMO^{-1}$. For the 3D largest scaling invariant space $\dot{B}^{-1}_{\infty,\infty}(\mathbb{R}^{3})$,  Bourgain and Pavlovi\'{c} \cite{BP08} proved ill-posedness for the Navier--Stokes equations \eqref{eq1.1} in $\dot{B}^{-1}_{\infty,\infty}(\mathbb{R}^{3})$. We refer to Yoneda \cite{Y10} and Wang \cite{W15} for more ill-posedness results.
\smallbreak

The first goal of this paper is to establish the global well-posedness of the Navier--Stokes equations \eqref{eq1.1} in the framework of Fourier--Besov spaces $F\dot{B}^{s(\cdot)}_{p(\cdot),1}(\mathbb{R}^{3})$ with variable regularity and integrability indices. Notice that the Fourier--Besov spaces $F\dot{B}^{s}_{p,r}(\mathbb{R}^{n})$ with constant regularity and integrability indices was initially introduced by Iwabuchi \cite{I11} to study the well-posed and ill-posed issues of the parabolic-elliptic Keller--Segel system. Later on,  Iwabuchi--Takada \cite{IT14} used the critical $F\dot{B}^{s}_{p,r}$-spaces to obtain a global well-posedness class (uniformly with respect to
the angular velocity $\Omega$) for the Navier--Stokes--Coriolis system. Konieczny--Yoneda \cite{KY11} showed global well--posedness and
asymptotic stability of small solutions for the 3D Navier--Stokes equations in the critical Fourier--Besov spaces. On the other hand, some progresses of well-posed issues of  the Navier--Stokes equations \eqref{eq1.1} in the variable function spaces have been obtained recently.
By overcoming the difficulties caused by the boundedness of the Riesz potential in a variable Lebesgue spaces, Chamorro and the first author of this paper \cite{CV24} established the global well-posedness of the Navier--Stokes equations \eqref{eq1.1} in the framework of variable Lebesgue spaces $L^{p(\cdot)}(\mathbb{R}^{3})$.
Ru \cite{R18} proved the global well-posedness of the  Navier--Stokes equations \eqref{eq1.1} with small initial data in the Fourier--Besov spaces $F\dot{B}^{s(\cdot)}_{p(\cdot),r}(\mathbb{R}^{3})$.
We refer to \cite{V241, V242} for more applications of variable function spaces in the field of partial differential equations.
\smallbreak

Now we state our main result for the 3D incompressible Navier--Stokes equations \eqref{eq1.1}.

\begin{theorem}\label{th1.1}
Let $p(\cdot)\in \mathcal{P}^{\log}_{0}(\mathbb{R}^{3})$ with $2\leq p^{-}\leq p(\cdot) \leq p^{+}\leq 6$, $1\leq \rho\leq+\infty$. For any $u_{0}\in F\dot{B}^{2-\frac{3}{p(\cdot)}}_{p(\cdot),1}(\mathbb{R}^{3})$ and  $f\in\mathcal{L}^{1}(0,\infty; F\dot{B}^{2-\frac{3}{p(\cdot)}}_{p(\cdot),1}(\mathbb{R}^{3}))\cap \mathcal{L}^{1}(0,\infty; F\dot{B}^{\frac{1}{2}}_{2,1}(\mathbb{R}^{3}))$, there exists a positive constant $\varepsilon$ such that if the initial data $u_{0}$ and $f$ satisfy
\begin{equation*}
\|u_{0}\|_{F\dot{B}^{2-\frac{3}{p(\cdot)}}_{p(\cdot),1}}+\|f\|_{\mathcal{L}^{1}_{t}(F\dot{B}^{2-\frac{3}{p(\cdot)}}_{p(\cdot),1})}+\|f\|_{\mathcal{L}^{1}_{t}(F\dot{B}^{\frac{1}{2}}_{2,1})}\leq \varepsilon,
\end{equation*}
then the Navier--Stokes equations \eqref{eq1.1} admits a unique global solution $u$ such that
\begin{equation*}
u\in \mathcal{L}^{\rho}(0,\infty; F\dot{B}^{2-\frac{3}{p(\cdot)}+\frac{2}{\rho}}_{p(\cdot),1}(\mathbb{R}^{3}))\cap \mathcal{L}^{1}(0,\infty; F\dot{B}^{\frac{5}{2}}_{2,1}(\mathbb{R}^{3}))\cap \mathcal{L}^{\infty}(0,\infty; F\dot{B}^{\frac{1}{2}}_{2,1}(\mathbb{R}^{3})).
\end{equation*}
\end{theorem}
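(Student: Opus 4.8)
The plan is to recast \eqref{eq1.1} as a fixed-point problem for its mild formulation and close a bilinear estimate in the solution space. Applying the Leray projection $\mathbb{P}=\mathrm{Id}-\nabla\Delta^{-1}(\nabla\cdot)$ onto divergence-free fields to remove the pressure and using Duhamel's formula together with $\nabla\cdot u=0$ to write $u\cdot\nabla u=\nabla\cdot(u\otimes u)$, a solution is a fixed point of
$$u=e^{t\Delta}u_{0}+\int_{0}^{t}e^{(t-\tau)\Delta}\mathbb{P}f(\tau)\,d\tau-\int_{0}^{t}e^{(t-\tau)\Delta}\mathbb{P}\,\nabla\cdot(u\otimes u)(\tau)\,d\tau=:u_{\mathrm{lin}}+B(u,u).$$
I would work in the Banach space
$$X:=\mathcal{L}^{\rho}(0,\infty;F\dot{B}^{2-\frac{3}{p(\cdot)}+\frac{2}{\rho}}_{p(\cdot),1})\cap\mathcal{L}^{1}(0,\infty;F\dot{B}^{\frac{5}{2}}_{2,1})\cap\mathcal{L}^{\infty}(0,\infty;F\dot{B}^{\frac{1}{2}}_{2,1}),$$
and reduce the theorem to the two standard ingredients of the contraction scheme: a linear bound $\|u_{\mathrm{lin}}\|_{X}\lesssim\varepsilon$ and a bilinear bound $\|B(u,v)\|_{X}\lesssim\|u\|_{X}\|v\|_{X}$.

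For the linear part I would invoke the heat-flow estimates in variable Fourier--Besov spaces established earlier in the paper. These yield $\|e^{t\Delta}u_{0}\|_{\mathcal{L}^{\rho}(F\dot{B}^{2-3/p(\cdot)+2/\rho}_{p(\cdot),1})}\lesssim\|u_{0}\|_{F\dot{B}^{2-3/p(\cdot)}_{p(\cdot),1}}$, the analogous maximal-regularity bound for the Duhamel term $\int_{0}^{t}e^{(t-\tau)\Delta}\mathbb{P}f\,d\tau$, and the constant-exponent counterparts controlling the two $L^{2}$-based components of $X$ by $\|f\|_{\mathcal{L}^{1}_{t}(F\dot{B}^{1/2}_{2,1})}$. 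Summing the three contributions gives $\|u_{\mathrm{lin}}\|_{X}\lesssim\varepsilon$ under the smallness hypothesis. The choice of the two auxiliary spaces is scaling-consistent: the pairs $(\rho,p)=(1,2)$ and $(\rho,p)=(\infty,2)$ sit at the endpoints $F\dot{B}^{5/2}_{2,1}$ and $F\dot{B}^{1/2}_{2,1}$ of the critical curve $s=2-\tfrac{3}{p}+\tfrac{2}{\rho}$.

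The heart of the matter is the bilinear estimate, where the failure of Young's convolution inequality in $L^{p(\cdot)}$ must be confronted. On the Fourier side, localizing with a cutoff $\varphi_{j}$ supported in $\{|\xi|\sim2^{j}\}$ gives $|\varphi_{j}\,\widehat{\nabla\cdot(u\otimes v)}|\lesssim 2^{j}\varphi_{j}\,|\hat u*\hat v|$, and the Duhamel integral against $e^{(t-\tau)\Delta}$ supplies the parabolic smoothing producing the required regularity and time-integrability gain. To estimate $\varphi_{j}(\hat u*\hat v)$ in the variable norm I would avoid convolution-as-translation (which is unbounded on $L^{p(\cdot)}$) and instead apply Hölder's inequality with the \emph{constant} exponent $2$ inside the convolution integral, after a Bony frequency decomposition $\hat u*\hat v=\sum(\text{paraproducts}+\text{remainder})$. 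For each dyadic interaction this bounds $|\varphi_{j}(\hat u*\hat v)(\xi)|$ by an $\xi$-independent product of $L^{2}$-type factors, so that $\|\varphi_{j}(\hat u*\hat v)\|_{L^{p(\cdot)}}$ is controlled by those factors times $\|\mathbf{1}_{\{|\xi|\sim2^{j}\}}\|_{L^{p(\cdot)}}$. The indicator norm is comparable to $2^{3j/\overline{p}_{j}}$ by the hypothesis $p(\cdot)\in\mathcal{P}^{\log}_{0}$, which is precisely what tames the variable exponent, and the $L^{2}$ factors are exactly the constant-exponent Fourier--Besov norms tracked by the auxiliary components of $X$; the window $2\le p(\cdot)\le6$ (with $6$ the $3$D Sobolev exponent) keeps all resulting exponents and the attendant Sobolev-type balances admissible. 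Summing in $\ell^{1}(j)$ and applying Hölder in time then yields $\|B(u,v)\|_{X}\lesssim\|u\|_{X}\|v\|_{X}$, with the $L^{2}$-based and variable-exponent components feeding one another.

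With both bounds in hand, the standard fixed-point lemma (if $\|u_{\mathrm{lin}}\|_{X}\le\varepsilon$ and $\|B(u,v)\|_{X}\le C\|u\|_{X}\|v\|_{X}$ with $4C\varepsilon<1$) produces a unique $u\in X$ solving the integral equation, which is the asserted global solution, and uniqueness in $X$ is inherited from the contraction. I expect the genuinely delicate step to be the bilinear estimate in the variable-exponent component: the main obstacle is organizing the Bony decomposition so that at every interaction the convolution is resolved by constant-exponent ($L^{2}$-based) Hölder estimates and the variable exponent enters only through the annulus-indicator norm, thereby circumventing the failure of Young's inequality in $L^{p(\cdot)}$ while keeping all exponents inside the admissible window $[2,6]$.
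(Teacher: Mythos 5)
Your overall architecture coincides with the paper's: mild formulation after applying the Leray projector, the same three-component solution space $\mathcal{X}_{t}=\mathcal{L}^{\rho}_{t}(F\dot{B}^{2-\frac{3}{p(\cdot)}+\frac{2}{\rho}}_{p(\cdot),1})\cap\mathcal{L}^{1}_{t}(F\dot{B}^{\frac{5}{2}}_{2,1})\cap\mathcal{L}^{\infty}_{t}(F\dot{B}^{\frac{1}{2}}_{2,1})$, the linear bounds from the variable-exponent heat estimate (Theorem \ref{th3.1}), the reduction of the variable-exponent component of the bilinear estimate to constant-exponent Fourier--Besov norms by factoring out the variable Lebesgue norm of the frequency annulus (which is $\lesssim 1$ after rescaling, by log-H\"older continuity), and the abstract fixed-point lemma. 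So the strategy is the right one.

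There is, however, one concrete step in your sketch that would fail as written: resolving the convolution $\widehat{u}*\widehat{v}$ on each Bony block ``with the constant exponent $2$,'' i.e.\ by Cauchy--Schwarz, so that $|\varphi_{j}(\widehat{u}*\widehat{v})|$ is bounded by a product of two $L^{2}$ factors. For the paraproduct interactions this produces, after inserting the weight $2^{j(3-\frac{3}{p(\cdot)})}$ and the indicator norm $\|\varphi_{j}\|_{L^{p(\cdot)}}\approx 2^{3j/p}$, a term of the form $2^{3k}\|\widehat{S_{k-1}u}\|_{L^{2}}\|\widehat{\Delta_{k}v}\|_{L^{2}}$ with $|k-j|\le 4$; splitting $2^{3k}=2^{k/2}\cdot 2^{5k/2}$ and expanding $\|\widehat{S_{k-1}u}\|_{L^{2}}\le\sum_{k'\le k-2}\|\widehat{\Delta_{k'}u}\|_{L^{2}}$ forces you to sum $\sum_{k'\le k-2}2^{(k-k')/2}\,2^{k'/2}\|\widehat{\Delta_{k'}u}\|_{L^{2}}$, whose kernel $2^{(k-k')/2}\ge 1$ is not summable in the relevant direction, so the bound cannot be closed against $\|u\|_{\mathcal{L}^{\infty}_{t}(F\dot{B}^{1/2}_{2,1})}$ and $\|v\|_{\mathcal{L}^{1}_{t}(F\dot{B}^{5/2}_{2,1})}$ (equivalently, a symmetric $L^{2}\times L^{2}$ pairing lands the product in $F\dot{B}^{3}_{\infty,1}$ and would require the uncontrolled norm $\|u\|_{F\dot{B}^{3}_{2,1}}$). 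The paper avoids this by choosing an \emph{asymmetric} constant-exponent pairing: the H\"older factorization is run with intermediate exponent $6$, $\frac{1}{p(\cdot)}=\frac{6-p(\cdot)}{6p(\cdot)}+\frac{1}{6}$ (this is where the hypothesis $2\le p(\cdot)\le 6$ enters), reducing matters to $\|u\otimes u\|_{\mathcal{L}^{1}_{t}(F\dot{B}^{5/2}_{6,1})}$, and then the product law of Lemma \ref{le2.9} is applied with $(p,p_{1},p_{2})=(6,2,\tfrac{3}{2})$ so that the positive regularity $s=\tfrac52$ always sits on the low-frequency factor (making the $\sum_{k'\le k-2}2^{k's}$ sum converge) while the other factor is measured in $F\dot{B}^{0}_{3/2,1}$, reached from $F\dot{B}^{1/2}_{2,1}$ by the embedding of Lemma \ref{le2.8}. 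Your sketch needs this (or an equivalent) choice of exponents spelled out; with it, and with the separate $L^{2}$-based estimate of Lemma \ref{le2.10} for the two constant-exponent components, the contraction closes exactly as you describe.
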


Let us remark that, the techniques we used to prove Theorem \ref{th1.1} is different from \cite{R18}, we shall employ the Fourier--Besov spaces with constant exponents, rather than Besov type spaces used in \cite{R18}, to be the auxiliary spaces to find global solutions of the equations \eqref{eq1.1}. In our view, choosing the Fourier--Besov spaces with constant exponents as auxiliary spaces seems more natural way to find solutions of PDEs in the framework of variable Fourier--Besov spaces.

\subsection{The elliptic-parabolic Keller--Segel system}

The 3D parabolic-elliptic Keller--Segel system write
\begin{equation}\label{eq1.2}
\begin{cases}
  \partial_{t}u-\Delta u+\nabla\cdot(u\nabla v)=f,\\
    -\Delta v=u,\\
  u(0,x)=u_0(x).
\end{cases}
\end{equation}
Here $u$ and $v$ stand for the cell density and the concentration of the chemical attractant, respectively, and both are unknown functions of the time variable $t\in\mathbb{R}^{+}$  and  of the space variable $x\in \mathbb{R}^{3}$,  $u_{0}$ is a given initial cell density and $f$ is a given external force.
\smallbreak

The above system \eqref{eq1.2} is a simplified version of the mathematical model of chemotaxis, which was first introduced by Keller--Segel \cite{KS70} to describe the collective movement of bacteria possibly leading to cell aggregation by chemotactic
effect.  The Keller--Segel system \eqref{eq1.2} also satisfies the following scale invariant property: if $u$ solves \eqref{eq1.2} with initial data $u_{0}$ and $f$, so does for any $\lambda>0$ the cell density $u_{\lambda}:(t,x)\mapsto\lambda^{2} u(\lambda^{2}t,\lambda x)$ with initial data $u_{0, \lambda}$ and $f_{\lambda}$, where
\begin{equation*}
 u_{0, \lambda}: x\mapsto\lambda^{2} u_{0}(\lambda x) \ \ \ \text{and}\ \ \  f_{\lambda}:(t,x)\mapsto\lambda^{4} f(\lambda^{2}t,\lambda x).
\end{equation*}
It is clear that the following function spaces are scaling invariant for initial data:
$$
  \dot{H}^{-\frac{1}{2}}(\mathbb{R}^{3})\subseteq L^{\frac{3}{2}}(\mathbb{R}^{3})\subseteq \dot{B}^{-2+\frac{3}{p}}_{p,q}(\mathbb{R}^{3})\subseteq BMO^{-2} \subseteq\dot{B}^{-2}_{\infty,\infty} (\mathbb{R}^{3}), \ \ 3\leq p\leq\infty, \ 1\leq q\leq 2.
$$
Similar to the Navier--Stokes equations \eqref{eq1.1}, a number of works have been devoted to establishing global well-posedness of the Keller--Segel system \eqref{eq1.2} with initial data in the above scale invariant spaces.  Corrias--Perthame--Zaag \cite{CPZ04} studied global solutions for initial data $ u_{0}\in L^{1} (\mathbb{R}^{3})\cap L^{\frac{3}{2}}(\mathbb{R}^{3})$ with
$\| u_{0}\|_{L^{\frac{3}{2}}}$ being sufficiently small, see also Kozono--Sugiyama \cite{KS08} for 2D case.
 The well-posed result in Besov spaces $\dot{B}^{-2+\frac{3}{p}}_{p,q}(\mathbb{R}^{3})$ with $1\leq p<\infty$ and $1\leq q\leq\infty$ is due to Iwabuchi \cite{I11}, where the ill-posedness is also established in critical Besov spaces $\dot{\mathcal{B}}^{-2}_{\infty,q}(\mathbb{R}^{3})$ with $2<q\leq\infty$.  Iwabuchi--Nakamura \cite{IN13} finally showed global existence of solutions for small initial data in $BMO^{-2}$ through the Triebel--Lizorkin spaces, which combining the well-posed results in \cite{I11},  we know that, the simplified Keller--Segel system \eqref{eq1.2} is well-posed in $\dot{B}^{-2+\frac{3}{p}}_{p,\infty}(\mathbb{R}^{3})$ and $\dot{F}^{-2}_{\infty,2}(\mathbb{R}^{3})$ with $\max\{1, \frac{3}{2}\}<p<\infty$, but ill-posed in $\dot{\mathcal{B}}^{-2}_{\infty,q}(\mathbb{R}^{3})$ with $2<q\leq\infty$. We refer the readers to \cite{LW23, LYZ23, NY20, NY22, XF22} for more ill-posedness results.
 \smallbreak

Notice that, in\cite{VZ24}, the authors of this paper have established the global well-posedness of the Keller--Segel system \eqref{eq1.2} in the framework of variable Lebesgue spaces. Indeed,  based on carefully examining the algebraical structure of the Keller--Segel system \eqref{eq1.2}, we can reduce system \eqref{eq1.2} into the generalized nonlinear heat equation to overcome the difficulties caused by the boundedness of the Riesz potential in a variable Lebesgue spaces, then by mixing some structural properties of the variable Lebesgue spaces with the optimal decay estimates of the heat kernel, we obtain global existence of solutions of the Keller--Segel system \eqref{eq1.2} in the variable Lebesgue spaces.
 \smallbreak

The second  goal of this paper is to prove the global well-posedness of the 3D Keller--Segel system \eqref{eq1.2} in the framework of  variable Fourier--Besov spaces. The main result is as follows.
\begin{theorem}\label{th1.2}
Let $p(\cdot)\in \mathcal{P}^{\log}_{0}(\mathbb{R}^{3})$ with $2\leq p^{-}\leq p(\cdot) \leq p^{+}\leq 6$, $1\leq \rho\leq+\infty$. For any $u_{0}\in F\dot{B}^{1-\frac{3}{p(\cdot)}}_{p(\cdot),1}(\mathbb{R}^{3})$ and $f\in\mathcal{L}^{1}(0,\infty; F\dot{B}^{1-\frac{3}{p(\cdot)}}_{p(\cdot),1}(\mathbb{R}^{3}))\cap \mathcal{L}^{1}(0,\infty; F\dot{B}^{-\frac{1}{2}}_{2,1}(\mathbb{R}^{3}))$, there exists a positive constant $\varepsilon$ such that if the initial data $u_{0}$ and $f$ satisfy
\begin{equation*}
\|u_{0}\|_{F\dot{B}^{1-\frac{3}{p(\cdot)}}_{p(\cdot),1}}+\|f\|_{\mathcal{L}^{1}_{t}(F\dot{B}^{1-\frac{3}{p(\cdot)}}_{p(\cdot),1})}+\|f\|_{\mathcal{L}^{1}_{t}(F\dot{B}^{-\frac{1}{2}}_{2,1})}\leq \varepsilon,
\end{equation*}
then the Keller--Segel system \eqref{eq1.2} admits a unique global solution $u$ such that
\begin{equation*}
u\in \mathcal{L}^{\rho}(0,\infty; F\dot{B}^{1-\frac{3}{p(\cdot)}+\frac{2}{\rho}}_{p(\cdot),1}(\mathbb{R}^{3}))\cap \mathcal{L}^{1}(0,\infty; F\dot{B}^{\frac{3}{2}}_{2,1}(\mathbb{R}^{3}))\cap \mathcal{L}^{\infty}(0,\infty; F\dot{B}^{-\frac{1}{2}}_{2,1}(\mathbb{R}^{3})).
\end{equation*}
\end{theorem}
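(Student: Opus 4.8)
The plan is to recast the system as a single nonlinear heat equation and to solve it by a contraction argument in the space $X$ appearing in the statement. The elliptic equation $-\Delta v=u$ gives $\nabla v=\nabla(-\Delta)^{-1}u$, so \eqref{eq1.2} becomes
\[
\partial_t u-\Delta u+\nabla\cdot\bigl(u\,\nabla(-\Delta)^{-1}u\bigr)=f,\qquad u(0)=u_0,
\]
and, by Duhamel's formula, is equivalent to the integral equation
\[
u(t)=e^{t\Delta}u_0+\int_0^t e^{(t-\tau)\Delta}f(\tau)\,d\tau+B(u,u)(t),\qquad
B(u,w)(t):=-\int_0^t e^{(t-\tau)\Delta}\nabla\cdot\bigl(u\,\nabla(-\Delta)^{-1}w\bigr)(\tau)\,d\tau.
\]
I would then invoke the standard abstract fixed point lemma: if the free term lies in $X$ with small norm and the bilinear map $B$ is bounded from $X\times X$ to $X$, then there is a unique small solution in $X$, from which the pair $(u,v)$ is recovered via $v=(-\Delta)^{-1}u$.

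First I would dispatch the linear part. Using the linear heat estimates in variable Fourier--Besov spaces established earlier in the paper, together with their constant-exponent counterparts, the parabolic smoothing provides the two-derivative gain and the time-integrability gain encoded by the $\mathcal{L}^{\rho}_t$ scale, yielding
\[
\|e^{t\Delta}u_0\|_{X}+\Bigl\|\int_0^t e^{(t-\tau)\Delta}f\,d\tau\Bigr\|_{X}
\lesssim \|u_0\|_{F\dot B^{1-\frac{3}{p(\cdot)}}_{p(\cdot),1}}
+\|f\|_{\mathcal{L}^1_t(F\dot B^{1-\frac{3}{p(\cdot)}}_{p(\cdot),1})}
+\|f\|_{\mathcal{L}^1_t(F\dot B^{-\frac12}_{2,1})}.
\]
Hence the smallness hypothesis on $u_0$ and $f$ places the free term inside the contraction ball of $X$. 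The time integral in each $\mathcal{L}^\rho_t$ component is handled by a Young inequality in the time variable against the frequency-localized decay $e^{-c(t-\tau)|\xi|^2}$.

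The heart of the matter is the bilinear estimate $\|B(u,w)\|_{X}\lesssim\|u\|_{X}\|w\|_{X}$. On the Fourier side the nonlinearity reads
\[
\widehat{\nabla\cdot\bigl(u\,\nabla(-\Delta)^{-1}w\bigr)}(\xi)=-\xi\cdot\int_{\mathbb{R}^3}\widehat{u}(\xi-\eta)\,\frac{\eta}{|\eta|^{2}}\,\widehat{w}(\eta)\,d\eta,
\]
so after a Bony paraproduct decomposition and dyadic localization the whole estimate reduces to controlling convolutions of Fourier transforms. For the two constant-exponent ($p=2$) components $\mathcal{L}^1(F\dot B^{3/2}_{2,1})$ and $\mathcal{L}^\infty(F\dot B^{-1/2}_{2,1})$ this is routine, since Young's convolution inequality on $L^2$ together with the heat decay closes the estimate much as in the Konieczny--Yoneda scheme. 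The genuine obstacle is the variable-exponent component $\mathcal{L}^\rho(F\dot B^{1-\frac{3}{p(\cdot)}+\frac{2}{\rho}}_{p(\cdot),1})$: Young's convolution inequality is unavailable for $L^{p(\cdot)}$, so one cannot split the frequency convolution into two variable-exponent factors. This is precisely why the constant-exponent spaces are carried along in $X$. The resolution I would pursue is to estimate each frequency convolution by pairing one variable-exponent factor against a constant ($p=2$) factor through a mixed Hölder/Young inequality in which the variable exponent appears only once, the $p=2$ factor being absorbed by the auxiliary $F\dot B^{3/2}_{2,1}$ or $F\dot B^{-1/2}_{2,1}$ norm. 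The conditions $p(\cdot)\in\mathcal{P}^{\log}_{0}(\mathbb{R}^{3})$ and $2\le p^-\le p(\cdot)\le p^+\le 6$ are exactly what validate this mixed inequality and the accompanying embeddings; verifying that the regularity and integrability indices balance under this pairing---accounting for the two derivatives and the order $-2$ smoothing of $(-\Delta)^{-1}$ in the nonlinearity---is the delicate computation on which the whole argument rests.

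Once the linear and bilinear bounds are secured, choosing $\varepsilon$ sufficiently small makes the map $u\mapsto e^{t\Delta}u_0+\int_0^t e^{(t-\tau)\Delta}f\,d\tau+B(u,u)$ a contraction on a small ball of $X$; its fixed point is the desired solution and uniqueness follows from the contraction estimate. Since all bounds are taken over $(0,\infty)$, the solution is global in time.
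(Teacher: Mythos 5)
Your overall architecture is the same as the paper's: Duhamel reformulation with the nonlinearity $\nabla\cdot(u\nabla(-\Delta)^{-1}u)$, a Picard fixed-point argument in the triple-norm space, linear heat estimates from Section 3, and constant-exponent Fourier--Besov norms carried along as auxiliary spaces. You also correctly locate the difficulty in the variable-exponent component of the bilinear estimate. However, that component --- which you yourself call ``the delicate computation on which the whole argument rests'' --- is exactly the step you do not carry out, and the mechanism you sketch for it would not work. You propose to bound the frequency convolution ``by pairing one variable-exponent factor against a constant $(p=2)$ factor through a mixed H\"older/Young inequality in which the variable exponent appears only once.'' Any convolution inequality whose output is measured in $L^{p(\cdot)}$ is precisely what fails here: variable Lebesgue spaces are not translation invariant and Young's inequality is unavailable (the paper states this explicitly in Section 2.1), so there is no estimate of the form $\|f*g\|_{L^{p(\cdot)}}\lesssim\|f\|_{L^{p(\cdot)}}\|g\|_{L^{1}}$ or any ``mixed'' variant with a variable exponent on one side of the convolution. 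As written, the central estimate of your proof rests on a tool that does not exist.

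The paper's actual device is different and avoids convolutions in $L^{p(\cdot)}$ altogether. Since $\varphi_{j}\mathcal{F}\bigl(u\nabla(-\Delta)^{-1}u\bigr)$ is supported in the annulus $|\xi|\sim 2^{j}$, one applies the variable-exponent H\"older inequality (Lemma \ref{le2.2}) to the \emph{pointwise} product in $\xi$, splitting $\frac{1}{p(\cdot)}=\frac{6-p(\cdot)}{6p(\cdot)}+\frac{1}{6}$ and checking by the scaling computation of \eqref{eq4.7} that
\begin{equation*}
\bigl\|2^{-3j\frac{6-p(\cdot)}{6p(\cdot)}}\varphi_{j}\bigr\|_{L^{\frac{6p(\cdot)}{6-p(\cdot)}}_{\xi}}\lesssim 1\quad\text{uniformly in }j .
\end{equation*}
This converts $\|u\nabla(-\Delta)^{-1}u\|_{\mathcal{L}^{1}_{t}(F\dot{B}^{2-\frac{3}{p(\cdot)}}_{p(\cdot),1})}$ into the purely constant-exponent quantity $\|u\nabla(-\Delta)^{-1}u\|_{\mathcal{L}^{1}_{t}(F\dot{B}^{3/2}_{6,1})}$ \emph{before} any bilinear analysis takes place; this reduction is where the hypotheses $2\le p^{-}$ and $p^{+}\le 6$ are actually used. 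Only then does the paper exploit the symmetric divergence structure $u\nabla(-\Delta)^{-1}u=-\nabla\cdot\bigl(\nabla(-\Delta)^{-1}u\otimes\nabla(-\Delta)^{-1}u-\frac{1}{2}|\nabla(-\Delta)^{-1}u|^{2}I\bigr)$ together with the constant-exponent product law (Lemma \ref{le2.9} with $p=6$, $p_{1}=2$, $p_{2}=\frac{3}{2}$) and the embedding $F\dot{B}^{1/2}_{2,1}\hookrightarrow F\dot{B}^{0}_{3/2,1}$ to close the estimate with $\|u\|_{\mathcal{L}^{1}_{t}(F\dot{B}^{3/2}_{2,1})}\|u\|_{\mathcal{L}^{\infty}_{t}(F\dot{B}^{-1/2}_{2,1})}$; the $p=2$ components are handled by Lemma \ref{le2.10}. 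The same frequency-side H\"older trick is also what lets one control the constant-exponent norms of $e^{t\Delta}u_{0}$ by $\|u_{0}\|_{F\dot{B}^{1-\frac{3}{p(\cdot)}}_{p(\cdot),1}}$ in the linear step, another point your outline glosses over. To repair your proof you should replace the proposed convolution inequality by this support-localized H\"older reduction to constant exponents.
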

 \smallbreak

The structure of this paper is arranged as follows. In Section 2, we introduce some conventions and notations, and state some basic results of the (variable) Fourier--Besov spaces. In Section 3, we establish the linear estimates of heat equation in the framework of the Fourier--Besov spaces with variable regularity and integrability indices. In Section 4, we present the proof of Theorem \ref{th1.1}, while the proof of Theorem \ref{th1.2} will be given in Section 5 .

\section{Preliminaries}
In this section, we introduce some conventions and notations, and state some basic results of variable function spaces.  Let $\mathcal{S}(\mathbb{R}^n)$ be the Schwartz class of rapidly decreasing functions on $\mathbb{R}^n$, and $\mathcal{S}'(\mathbb{R}^n)$ the space of tempered
distributions.  Given  $f\in\mathcal{S}(\mathbb{R}^{n})$, the Fourier transform $\mathcal{F}(f)$ (or $\widehat{f}$) is defined by
$$
  \mathcal{F}(f)(\xi)=\widehat{f}(\xi):=\frac{1}{(2\pi)^{\frac{n}{2}}}\int_{\mathbb{R}^{n}}f(x)e^{-ix\cdot\xi}dx.
$$
More generally, the Fourier transform of  a tempered distribution $f\in\mathcal{S}'(\mathbb{R}^{n})$ is defined by the dual argument in the standard way. Throughout this paper,  the letters $C$ and $C_{i}$ ($i=1,2, \cdots$) stand for the generic harmless constants, whose meaning is clear from the context. For brevity, we shall use the notation
$u\lesssim v$ instead of $u\leq Cv$, and $u\approx v$ means that $u\lesssim v$
and $v\lesssim u$.   We denote $\{d_{j}\}_{j\in\mathbb{Z}}$ a generic element of
$l^{1}(\mathbb{Z})$ so that $d_{j}\ge0$ and
$\|d_{j}\|_{l^{1}(\mathbb{Z})}=1$.

 \subsection{Variable Lebesgue spaces}
Spaces of variable integrability, also known as variable Lebesgue spaces, are generalization of the classical Lebesgue spaces, replacing the constant $p$ with a variable exponent function $p(\cdot)$.  The origin of the variable Lebesgue spaces $L^{p(\cdot)}(\mathbb{R}^{n})$ can be traced back to Orlicz \cite{O31} in 1931, and then studied two decades later by Nakano \cite{N50, N51}, but the modern development started with the foundational paper of Kov\'{a}\v{c}ik--R\'{a}kosn\'{i}k \cite{KR91}, as well as the papers of Cruz-Uribe \cite{C03} and Diening \cite{D04}.  We begin with a fundamental definition.  Let $\mathcal{P}_{0}(\mathbb{R}^{n})$ be the set of all Lebesgue measurable functions $p(\cdot):\mathbb{R}^{n}\rightarrow[1,+\infty)$ such that
\begin{equation*}
1< p^{-}:=\operatorname{essinf}_{x\in\mathbb{R}^{n}}p(x), \ \  p^{+}:=\operatorname{esssup}_{x\in\mathbb{R}^{n}}p(x)<+\infty.
\end{equation*}
For any $p(\cdot)\in \mathcal{P}_{0}(\mathbb{R}^{n})$,  a measurable function $f$, we denote
\begin{equation}\label{eq2.1}
   \|f\|_{L^{p(\cdot)}}:=\inf\left\{\lambda>0:  \rho_{p(\cdot)}\left(\frac{f}{\lambda}\right)\leq 1\right\},
\end{equation}
where the modular function $\rho_{p(\cdot)}$ associated with $p(\cdot)$ is given by
\begin{equation*}
   \rho_{p(\cdot)}(f):=\int_{\mathbb{R}^{n}}|f(x)|^{p(x)}dx.
\end{equation*}
If the set on the right-hand side of  \eqref{eq2.1} is empty then we denote $\|f\|_{L^{p(\cdot)}}=\infty$. Now let us recall the definition of the Lebesgue spaces $L^{p(\cdot)}(\mathbb{R}^{n})$ with variable exponent.

\begin{definition}\label{de2.1}
Given $p(\cdot)\in \mathcal{P}_{0}(\mathbb{R}^{n})$, we define the variable Lebesgue spaces $L^{p(\cdot)}(\mathbb{R}^{n})$ to be the set of Lebesgue measurable functions $f$ such that   $\|f\|_{L^{p(\cdot)}}<+\infty$.
\end{definition}

The variable Lebesgue spaces $L^{p(\cdot)}(\mathbb{R}^{n})$ retain some good properties of the usual Lebesgue spaces, such as $L^{p(\cdot)}(\mathbb{R}^{n})$ is a Banach space associated with the norm $\|\cdot\|_{L^{p(\cdot)}}$, and we have the following  H\"{o}lder's inequality (cf. \cite{CF13}, Corollary 2.28; \cite{DHHR11}, Lemma 3.2.20). For a complete  presentation of the theory of a  variable Lebesgue spaces, we refer to the readers to see books \cite{CF13, DHHR11}.
\smallbreak

\begin{lemma}\label{le2.2}
Given two exponent functions $p_{1}(\cdot), p_{2}(\cdot)\in \mathcal{P}_{0}(\Omega)$, define $p(\cdot)\in \mathcal{P}_{0}(\Omega)$ by $\frac{1}{p(x)}=\frac{1}{p_{1}(x)}+\frac{1}{p_{2}(x)}$. Then there exists a constant $C$ such that for all $f\in L^{p_{1}(\cdot)}(\Omega)$ and $g\in L^{p_{2}(\cdot)}(\Omega)$, we have $fg\in L^{p(\cdot)}(\Omega)$ and
\begin{equation}\label{eq2.2}
   \|fg\|_{L^{p(\cdot)}}\leq C \|f\|_{L^{p_{1}(\cdot)}}\|g\|_{L^{p_{2}(\cdot)}}.
\end{equation}
\end{lemma}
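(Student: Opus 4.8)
The plan is to reduce the statement to a pointwise Young's inequality at the level of the modular $\rho_{p(\cdot)}$ and then convert the resulting modular estimate into the desired norm estimate, using the homogeneity of the Luxemburg norm \eqref{eq2.1} together with the elementary correspondence between $\rho_{p(\cdot)}$ and $\|\cdot\|_{L^{p(\cdot)}}$ on the unit ball. First I would dispose of the trivial cases: if $\|f\|_{L^{p_1(\cdot)}}=0$ or $\|g\|_{L^{p_2(\cdot)}}=0$, then $f$ or $g$ vanishes almost everywhere and both sides vanish. Otherwise, since the norm \eqref{eq2.1} is absolutely homogeneous, I may replace $f$ by $f/\|f\|_{L^{p_1(\cdot)}}$ and $g$ by $g/\|g\|_{L^{p_2(\cdot)}}$, so that it suffices to establish the bound under the normalization $\|f\|_{L^{p_1(\cdot)}}=\|g\|_{L^{p_2(\cdot)}}=1$. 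From the definition \eqref{eq2.1}, monotone convergence as $\lambda\downarrow 1$ shows that a unit norm forces $\rho_{p_1(\cdot)}(f)\le 1$ and $\rho_{p_2(\cdot)}(g)\le 1$.

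Next I would introduce the pointwise conjugate pair $r_i(x):=p_i(x)/p(x)$ for $i=1,2$. The defining relation $\frac{1}{p(x)}=\frac{1}{p_1(x)}+\frac{1}{p_2(x)}$ gives $\frac{1}{r_1(x)}+\frac{1}{r_2(x)}=1$ at every $x$, and since $\frac{1}{p_1(x)},\frac{1}{p_2(x)}>0$ one has $p(x)<p_i(x)$, hence $r_1(x),r_2(x)\in(1,\infty)$ pointwise (the bounds $p^-,p^+\in(1,\infty)$ from $p(\cdot)\in\mathcal{P}_0$ keep these exponents admissible). Applying the scalar Young inequality $ab\le \frac{a^{r_1(x)}}{r_1(x)}+\frac{b^{r_2(x)}}{r_2(x)}$ with $a=|f(x)|^{p(x)}$ and $b=|g(x)|^{p(x)}$, and using the identities $\bigl(|f(x)|^{p(x)}\bigr)^{r_1(x)}=|f(x)|^{p_1(x)}$ and $\bigl(|g(x)|^{p(x)}\bigr)^{r_2(x)}=|g(x)|^{p_2(x)}$ together with $r_i(x)\ge 1$, yields the pointwise bound
\[
|f(x)g(x)|^{p(x)}\le |f(x)|^{p_1(x)}+|g(x)|^{p_2(x)}.
\]
Integrating over $\Omega$ then gives $\rho_{p(\cdot)}(fg)\le \rho_{p_1(\cdot)}(f)+\rho_{p_2(\cdot)}(g)\le 2$ in the normalized case.

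Finally I would convert this modular bound back into a norm bound. Since $p^->1$, for every $\lambda\ge 1$ one has $\lambda^{p(x)}\ge \lambda^{p^-}$, whence $\rho_{p(\cdot)}(fg/\lambda)\le \lambda^{-p^-}\rho_{p(\cdot)}(fg)\le 2\lambda^{-p^-}$; choosing $\lambda=2^{1/p^-}$ makes the right-hand side at most $1$, so directly from the infimum in \eqref{eq2.1} we obtain $\|fg\|_{L^{p(\cdot)}}\le 2^{1/p^-}$. Undoing the normalization yields the claim with the explicit constant $C=2^{1/p^-}\le 2$. The only genuinely delicate points are the two modular--norm correspondences used above, namely that a unit norm controls the modular and that a modular bounded by $2$ controls the norm; both rely on $p(\cdot)$ staying bounded away from $1$ and $\infty$, which is exactly the hypothesis $p(\cdot)\in\mathcal{P}_0(\Omega)$. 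Everything else is pointwise and elementary, so I expect the conversion step to be the main place where the structural assumptions on the exponent are actually consumed.
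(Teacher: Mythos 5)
Your argument is correct: the normalization, the pointwise Young inequality with the conjugate pair $r_i(x)=p_i(x)/p(x)$, and the two modular--norm conversions (unit norm implies modular at most $1$ via monotone convergence, and modular at most $2$ implies norm at most $2^{1/p^-}$) are all valid under the standing hypothesis $p(\cdot)\in\mathcal{P}_0$. The paper itself does not prove this lemma but cites it from the literature (Corollary 2.28 in Cruz-Uribe--Fiorenza and Lemma 3.2.20 in Diening et al.), and your proof is essentially the standard argument given there.
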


However,  not all properties of the usual Lebesgue spaces $L^{p}(\mathbb{R}^{n})$ can be generalized to the variable Lebesgue spaces $L^{p(\cdot)}(\mathbb{R}^{n})$. For example, the variable Lebesgue spaces are not translation invariant, thus the convolution of two functions $f$ and $g$ is not well-adapted, and the Young's inequality are not valid anymore (cf. \cite{CF13}, Section 5.3). In consequence, new ideas and techniques are needed to tackle with the boundedness of many classical operators appeared in the mathematical analysis of PDEs. A classical approach to study these difficulties is to consider some constraints on the variable exponent, and the most common one is given by the so-called \textit{log-H\"{o}lder continuity condition}.
\begin{definition}\label{de2.3}
Let $p(\cdot)\in \mathcal{P}_{0}(\mathbb{R}^{n})$ such that  there exists a limit $\frac{1}{p_{\infty}}=\lim_{|x|\rightarrow\infty}\frac{1}{p(x)}$.
\begin{itemize}
\item We say that $p(\cdot)$ is locally log-H\"{o}lder continuous if for all $x,y\in \mathbb{R}^{n}$,  there exists a constant $C$ such that $\big{|}\frac{1}{p(x)}-\frac{1}{p(y)}\big{|}\leq \frac{C}{\log(e+\frac{1}{|x-y|})}$;
\item We say that $p(\cdot)$ satisfies the log-H\"{o}lder decay condition if for all $x\in \mathbb{R}^{n}$,  there exists a constant $C$ such that $\big{|}\frac{1}{p(x)}-\frac{1}{p_{\infty}}\big{|}\leq \frac{C}{\log(e+|x|)}$;
\item We say that  $p(\cdot)$ is globally log-H\"{o}lder continuous in $\mathbb{R}^n$ if it is locally \textit{log-H\"{o}lder continuous} and satisfies the \textit{log-H\"{o}lder decay condition};
\item We define the class of variable exponents $\mathcal{P}_{0}^{\log}(\mathbb{R}^{n})$ as
\begin{equation*}
 \mathcal{P}_{0}^{\log}(\mathbb{R}^{n}):=\left\{p(\cdot)\in\mathcal{P}(\mathbb{R}^{n}):\ \  p(\cdot)\ \text{is globally log-H\"{o}lder continuous in}\  \mathbb{R}^n\right\}.
\end{equation*}
\end{itemize}
\end{definition}

For any $p(\cdot)\in \mathcal{P}_{0}^{\log}(\mathbb{R}^{n})$, we have the following results in terms of the Hardy--Littlewood maximal function and  Riesz transforms (cf. \cite{CF13}, Theorem 3.16 and Theorem 5.42;\cite{DHHR11}, Theorem 4.3.8 and Corollary 6.3.10).

\begin{lemma}\label{le2.4}
Let $p(\cdot)\in \mathcal{P}^{\log}_{0}(\mathbb{R}^{n})$ with $1<p^{-}\leq p^{+}<+\infty$. Then  for any $f\in L^{p(\cdot)}(\mathbb{R}^{n})$, there exists a positive constant $C$ such that
\begin{equation}\label{eq2.3}
   \|\mathcal{M} (f)\|_{L^{p(\cdot)}}\leq C\|f\|_{L^{p(\cdot)}},
\end{equation}
where $\mathcal{M}$ is the Hardy--Littlewood maximal function defined by
\begin{equation*}
  \mathcal{M}(f)(x):=\sup_{x\in B}\frac{1}{|B|}\int_{B}|f(y)|dy,
\end{equation*}
and $B\subset \mathbb{R}^{n}$ is an open ball with center $x$. Furthermore,
\begin{equation}\label{eq2.4}
   \|\mathcal{R}_{j}(f)\|_{L^{p(\cdot)}}\leq C\|f\|_{L^{p(\cdot)}} \ \ \text{for any}\ \ 1\leq j\leq n,
\end{equation}
where $\mathcal{R}_{j}$ ($1\leq j\leq n$) are the usual Riesz transforms, i.e. $\mathcal{F}\left(\mathcal{R}_{j}f\right)(\xi)=-\frac{i\xi_{j}}{|\xi|}\mathcal{F}(f)(\xi)$.
\end{lemma}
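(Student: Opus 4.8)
The plan is to handle the two assertions in a definite order: the maximal inequality \eqref{eq2.3} is the substantive one, and the Riesz transform bound \eqref{eq2.4} will follow from it by extrapolation. Throughout I would exploit the standard equivalence between the Luxemburg norm \eqref{eq2.1} and the modular $\rho_{p(\cdot)}$: since $\|\cdot\|_{L^{p(\cdot)}}$ is homogeneous, to prove \eqref{eq2.3} it suffices, after normalizing $\|f\|_{L^{p(\cdot)}}\le 1$ (equivalently $\rho_{p(\cdot)}(f)\le 1$), to establish the modular bound $\rho_{p(\cdot)}(\mathcal{M}f)\lesssim 1$ with a constant depending only on $p^{-}$, $p^{+}$ and the log-H\"older constants.

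For the maximal inequality itself, the core is a pointwise \emph{key estimate} in which the exponent $p(x)$ is transferred onto the integrand. Fix a constant $p_0\in(1,p^{-}]$ and set $g:=|f|^{p(\cdot)/p_0}$, so that $\|g\|_{L^{p_0}}^{p_0}=\rho_{p(\cdot)}(f)\le 1$. The aim is to prove, for every $x$,
\begin{equation*}
\mathcal{M}f(x) \le C\big(\mathcal{M}g(x)\big)^{p_0/p(x)} + C\,G(x)^{1/p(x)},
\end{equation*}
where $G\in L^{1}(\mathbb{R}^{n})$ collects the errors arising from replacing $p(y)$ by $p(x)$ inside the averages: the local log-H\"older continuity controls this discrepancy on comparable scales, while the decay condition toward $\frac{1}{p_\infty}$ handles the contribution of $y$ far from $x$, producing an integrable majorant of the form $(e+|\cdot|)^{-m}$ with $m>n$. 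Raising to the power $p(x)$ and integrating then gives
\begin{equation*}
\rho_{p(\cdot)}(\mathcal{M}f) \le C\int_{\mathbb{R}^{n}}\big(\mathcal{M}g(x)\big)^{p_0}\,dx + C\int_{\mathbb{R}^{n}}G(x)\,dx \le C\|g\|_{L^{p_0}}^{p_0} + C \le C,
\end{equation*}
where the classical strong-$(p_0,p_0)$ boundedness of $\mathcal{M}$, valid precisely because $p_0>1$, is what makes the first integral converge. Converting back through the norm--modular equivalence yields \eqref{eq2.3}. I emphasize that passing through a fixed Lebesgue space $L^{p_0}$ (rather than $L^1$) is essential, since $\mathcal{M}$ does not map $L^1$ into $L^1$.

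The Riesz transform bound \eqref{eq2.4} I would then obtain by Rubio de Francia extrapolation. First note that $p(\cdot)\in\mathcal{P}^{\log}_{0}(\mathbb{R}^{n})$ with $1<p^{-}\le p^{+}<+\infty$ forces the dual exponent $p'(\cdot)$, defined by $\frac{1}{p(x)}+\frac{1}{p'(x)}=1$, to lie in $\mathcal{P}^{\log}_{0}(\mathbb{R}^{n})$ as well; hence by the first part $\mathcal{M}$ is bounded on both $L^{p(\cdot)}$ and $L^{p'(\cdot)}$. Since each $\mathcal{R}_{j}$ is a convolution Calder\'on--Zygmund operator, it satisfies the classical weighted estimate $\int|\mathcal{R}_{j}f|^{p_0}w\lesssim\int|f|^{p_0}w$ for every Muckenhoupt weight $w\in A_{p_0}$ and some fixed $1<p_0<\infty$. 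The extrapolation theorem in its variable-exponent form then transfers these weighted $A_{p_0}$ bounds to the scale $L^{p(\cdot)}$, exactly because $\mathcal{M}$ is bounded on $L^{p(\cdot)}$ and on $L^{p'(\cdot)}$, which gives \eqref{eq2.4}.

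The step I expect to be the main obstacle is the pointwise key estimate in the maximal inequality: the full strength of the log-H\"older hypotheses is consumed there to absorb the oscillation of $p(\cdot)$, both locally and at infinity, into a genuinely \emph{integrable} error $G$ rather than a merely bounded one, which is what legitimizes the subsequent integration. Everything downstream is either bookkeeping with the modular or a black-box appeal to weighted estimates and extrapolation.
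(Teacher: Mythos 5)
This lemma is not proved in the paper at all: it is quoted as a known result, with pointers to \cite{CF13} (Theorem 3.16 and Theorem 5.42) and \cite{DHHR11} (Theorem 4.3.8 and Corollary 6.3.10). Your outline is essentially a faithful reconstruction of the proofs in those references: the Diening/Cruz-Uribe ``key estimate'' reducing the modular of $\mathcal{M}f$ to the strong $(p_0,p_0)$ bound for $\mathcal{M}$ applied to $g=|f|^{p(\cdot)/p_0}$, with the local log-H\"older condition absorbing the oscillation of $p(\cdot)$ at comparable scales and the decay condition producing the integrable error $G$, followed by Rubio de Francia extrapolation (in its variable-exponent form, which requires exactly the boundedness of $\mathcal{M}$ on $L^{p(\cdot)}$ and $L^{p'(\cdot)}$ that the first part supplies) to transfer the weighted $A_{p_0}$ estimates for the Calder\'on--Zygmund operators $\mathcal{R}_j$ to $L^{p(\cdot)}$. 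The sketch is sound; the only place where real work remains is the pointwise key estimate itself, which you correctly identify as the crux --- a complete argument there requires splitting $f$ according to $|f|\le 1$ and $|f|>1$ and verifying that the far-field error is dominated by a fixed integrable majorant of the form $(e+|\cdot|)^{-m}$, $m>n$, but this is precisely what the cited references carry out. So there is no gap in the approach, only in the level of detail, and for the purposes of this paper the statement is legitimately used as a black box.
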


\subsection{Littlewood--Paley decomposition and Fourier--Besov spaces}
Let us briefly recall some basic facts on Littlewood--Paley dyadic decomposition theory.  More details may be found in Chap. 2 and Chap. 3 in the book \cite{BCD11}. Choose a smooth radial non-increasing function $\chi$  with $\operatorname{Supp}\chi\subset B(0,\frac{4}{3})$ and $\chi\equiv1$ on $B(0,\frac{3}{4})$. Set $\varphi(\xi)=\chi(\frac{\xi}{2})-\chi(\xi)$. It is not difficult to check that
 $\varphi$ is supported in the shell $\{\xi\in\mathbb{R}^{n},\ \frac{3}{4}\leq
|\xi|\leq \frac{8}{3}\}$, and
\begin{align*}
   \sum_{j\in\mathbb{Z}}\varphi(2^{-j}\xi)=1, \ \ \  \xi\in\mathbb{R}^{n}\backslash\{0\}.
\end{align*}
 For any $f\in\mathcal{S}'(\mathbb{R}^{n})$, the homogeneous dyadic blocks $\Delta_{j}$ ($j\in\mathbb {Z}$) and the low-frequency cutoff operator  $ S_{j}$ are defined for all $j\in\mathbb{Z}$ by
\begin{align*}
 \Delta_{j}f: =\mathcal{F}^{-1}(\varphi(2^{-j}\cdot)\mathcal{F}f), \ \ \ S_{j}f(x): =\mathcal{F}^{-1}(\chi(2^{-j}\cdot)\mathcal{F}f).
\end{align*}
Let $\mathcal{S}'_{h}(\mathbb{R}^{n})$ be the space of tempered distribution $f\in\mathcal{S}'(\mathbb{R}^{n})$ such that
$$
\lim_{j\rightarrow -\infty} S_{j}f(x)=0.
$$
Then one has the unit decomposition for any tempered distribution $f\in\mathcal{S}'_{h}(\mathbb{R}^{n})$:
\begin{align}\label{eq2.5}
 f=\sum_{j\in\mathbb{Z}}\Delta_{j}f.
\end{align}
The above homogeneous dyadic block $\Delta_{j}$ and
the partial summation operator $S_{j}$  satisfy the following quasi-orthogonal properties: for any $f, g\in\mathcal{S}'(\mathbb{R}^{n})$, one has
\begin{align}\label{eq2.6}
  \Delta_{i}\Delta_{j}f\equiv0\ \ \ \text{if}\ \ \ |i-j|\geq 2\ \ \ \text{and}\ \ \
  \Delta_{i}(S_{j-1}f\Delta_{j}g)\equiv0 \ \ \ \text{if}\ \ \ |i-j|\geq 5.
\end{align}
\smallbreak

Applying the above decomposition, the homogeneous Fourier--Besov space $F\dot{B}_{p,r}^s(\mathbb{R}^{n})$ and the Chemin--Lerner type space $ \tilde{L}^{\lambda}(0,T; F\dot{B}^{s}_{p,r}(\mathbb{R}^{n}))$ can be defined as follows:

\begin{definition}\label{def2.5}
Let $s\in \mathbb{R}$ and $1\leq p,r\leq\infty$, the homogeneous Fourier--Besov space $F\dot{B}^{s}_{p,r}(\mathbb{R}^{n})$ is defined to be the set of all tempered distributions $f\in \mathcal{S}'_{h}(\mathbb{R}^{n})$ such that $\hat{f} \in L_{loc}^1(\mathbb{R}^n)$ and the following norm is finite:
\begin{equation*}
  \|f\|_{F\dot{B}^{s}_{p,r}}:= \begin{cases} \left(\sum_{j\in\mathbb{Z}}2^{jsr}\|\widehat{\Delta_{j}f}\|_{L^{p}}^{r}\right)^{\frac{1}{r}}
  \ \ &\text{if}\ \ 1\leq r<\infty,\\
  \sup_{j\in\mathbb{Z}}2^{js}\|\widehat{\Delta_{j}f}\|_{L^{p}}\ \
  &\text{if}\ \
  r=\infty.
 \end{cases}
\end{equation*}
\end{definition}

\begin{definition}\label{def2.6} For $0<T\leq\infty$, $s\in \mathbb{R}$ and $1\leq p, r, \lambda\leq\infty$, we set  {\rm{(}}with the usual convention if
$r=\infty${\rm{)}}:
$$
  \|f\|_{\mathcal{L}^{\lambda}_{T}(F\dot{B}^{s}_{p,r})}:=\big(\sum_{j\in\mathbb{Z}}2^{jsr}\|\widehat{\Delta_{j}f}\|_{L^{\lambda}_{T}(L^{p})}^{r}\big)^{\frac{1}{r}}.
$$
Then we define the space $\mathcal{L}^{\lambda}(0,T; F\dot{B}^{s}_{p,r}(\mathbb{R}^{n}))$  as the set of temperate distributions $f$ over $(0,T)\times \mathbb{R}^{n}$ such that $\lim_{j\rightarrow -\infty}S_{j}f=0$ in $\mathcal{S}'((0,T)\times\mathbb{R}^{n})$ and $\|f\|_{\mathcal{L}^{\lambda}_{T}(F\dot{B}^{s}_{p,r})}<\infty$.
\end{definition}

Based on the properties of Fourier transform, we have the following Bernstein type lemma (cf. Lemma 2.3 in \cite{ZZ19}).
\begin{lemma}\label{le2.7}
Let $\mathcal{B}$ be a ball and $\mathcal{C}$ a ring of $\mathbb{R}^n$. A constant C exists so that for any positive real number $\lambda$, any nonnegative integer $k$, any homogeneous function $\sigma$ of degree $m$ smooth outside of zero and any couple of real numbers $(p,q)$ with $1\leq q \leq p$,  there hold
\begin{itemize}

  \item[(\romannumeral1)] $\operatorname{supp} \widehat{u} \subset \lambda \mathcal{B} \Rightarrow \sup_{|\alpha|=k}\|\widehat{\partial^{\alpha}u}\|_{L^q} \leq C^{k+1} \lambda^{k+n(\frac{1}{q}-\frac{1}{p})} \|\widehat{u}\|_{L^p}$;

  \item[(\romannumeral2)] $\operatorname{supp}  \widehat{u} \subset \lambda \mathcal{C} \Rightarrow C^{-k-1} \lambda^{k} \|\widehat{u}\|_{L^p} \leq \sup_{|\alpha|=k}\|\widehat{\partial^{\alpha}u}\|_{L^p} \leq C^{k+1} \lambda^{k} \|\widehat{u}\|_{L^p}$;

  \item[(\romannumeral3)] $\operatorname{supp} \widehat{u} \subset \lambda \mathcal{C} \Rightarrow \|\widehat{\sigma(D) u}\|_{L^q} \leq C(\sigma,m) \lambda^{m+n(\frac{1}{q}-\frac{1}{p})} \|\widehat{u}\|_{L^p}.$
\end{itemize}
\end{lemma}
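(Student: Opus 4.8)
The plan is to reduce all three statements to elementary pointwise bounds for the Fourier multipliers $\xi^{\alpha}$ and $\sigma(\xi)$ over the dilated ball or ring, combined with H\"older's inequality on a set of finite measure to pass from the exponent $p$ down to $q$. Writing $f:=\widehat{u}$ and recalling that $\widehat{\partial^{\alpha}u}(\xi)=(i\xi)^{\alpha}f(\xi)$ and $\widehat{\sigma(D)u}(\xi)=\sigma(\xi)f(\xi)$, each inequality becomes a statement about multiplying the compactly supported function $f$ by a symbol and comparing its $L^{q}$ and $L^{p}$ norms. The single nontrivial point will be the lower bound in (ii); everything else is a one-line pointwise estimate followed by H\"older.

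For (i) and the upper bound in (ii) I would argue as follows. If $\operatorname{supp} f\subset\lambda\mathcal{B}$, then on the support $|\xi|\le R\lambda$ for the radius $R$ of $\mathcal{B}$, hence $|\xi^{\alpha}|\le|\xi|^{k}\le(R\lambda)^{k}$ whenever $|\alpha|=k$, which yields $\|\xi^{\alpha}f\|_{L^{p}}\le(R\lambda)^{k}\|f\|_{L^{p}}$. To lower the integrability exponent I would use that $\lambda\mathcal{B}$ has finite measure $|\lambda\mathcal{B}|=\lambda^{n}|\mathcal{B}|$ and that for $1\le q\le p$ one has $\|g\|_{L^{q}(E)}\le|E|^{\frac1q-\frac1p}\|g\|_{L^{p}(E)}$ by H\"older with exponents $p/q$ and its conjugate. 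Applying this with $g=\xi^{\alpha}f$ and $E=\lambda\mathcal{B}$ produces the factor $\lambda^{n(\frac1q-\frac1p)}$ and hence the claimed bound $C^{k+1}\lambda^{k+n(\frac1q-\frac1p)}\|f\|_{L^{p}}$, the powers of $R$ and $|\mathcal{B}|^{\frac1q-\frac1p}$ being absorbed into $C^{k+1}$. The upper inequality in (ii) is identical, using the outer radius of the ring $\mathcal{C}$ and taking $q=p$.

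The main obstacle is the lower bound in (ii), where one must reconstruct $f$ from the family $\{\xi^{\alpha}f\}_{|\alpha|=k}$. Here I would exploit that on a ring one has $|\xi|\gtrsim\lambda$ away from the origin. Choosing a smooth cutoff $\theta$ with $\theta\equiv1$ on $\mathcal{C}$ and supported in a slightly larger ring bounded away from $0$, one has $f=\theta(\cdot/\lambda)f$. Expanding $|\xi|^{2k}=(\xi_{1}^{2}+\cdots+\xi_{n}^{2})^{k}=\sum_{|\beta|=k}\binom{k}{\beta}\xi^{2\beta}$ and writing $\xi^{2\beta}f=\xi^{\beta}(\xi^{\beta}f)$, I would decompose
\begin{equation*}
f(\xi)=\sum_{|\beta|=k}\binom{k}{\beta}\,\frac{\theta(\xi/\lambda)\,\xi^{\beta}}{|\xi|^{2k}}\,\big(\xi^{\beta}f\big)(\xi).
\end{equation*}
Each multiplier $m_{\beta}(\xi)=\theta(\xi/\lambda)\xi^{\beta}|\xi|^{-2k}$ is supported where $|\xi|\gtrsim\lambda$, so $|m_{\beta}(\xi)|\le C\lambda^{-k}$ pointwise; taking $L^{p}$ norms and using $\sum_{|\beta|=k}\binom{k}{\beta}=n^{k}$ gives $\|f\|_{L^{p}}\le C^{k+1}\lambda^{-k}\sup_{|\alpha|=k}\|\xi^{\alpha}f\|_{L^{p}}$, which is exactly the desired lower bound.

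Finally, for (iii) I would write $\sigma(\xi)=|\xi|^{m}\sigma(\xi/|\xi|)$; since $\sigma$ is smooth outside the origin it is bounded on the unit sphere, and since $\operatorname{supp} f\subset\lambda\mathcal{C}$ forces $|\xi|\approx\lambda$, we get $|\sigma(\xi)|\le C(\sigma,m)\lambda^{m}$ on the support. Hence $\|\sigma f\|_{L^{p}}\le C(\sigma,m)\lambda^{m}\|f\|_{L^{p}}$, and one last application of H\"older on the finite-measure set $\lambda\mathcal{C}$ supplies the factor $\lambda^{n(\frac1q-\frac1p)}$, completing the estimate. Thus all three parts follow from the same two ingredients, namely a pointwise bound on the symbol over the dilated ball or ring together with H\"older's inequality on a finite-measure set, with the cutoff-plus-multinomial device of the previous paragraph being the only genuinely new step.
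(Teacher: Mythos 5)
Your proof is correct. Note first that the paper itself does not prove this lemma; it is quoted with a citation to Lemma 2.3 of \cite{ZZ19}, so there is no in-paper argument to compare against. Your argument is the standard (and essentially the only natural) one for the Fourier--Besov version of Bernstein's inequalities: because the norms are taken on $\widehat{u}$ rather than on $u$, every operator in sight acts as a pointwise multiplier $\xi^{\alpha}$ or $\sigma(\xi)$ on a function supported in $\lambda\mathcal{B}$ or $\lambda\mathcal{C}$, so the sup of the symbol over the support plus H\"older's inequality $\|g\|_{L^{q}(E)}\leq |E|^{\frac1q-\frac1p}\|g\|_{L^{p}(E)}$ on the finite-measure set $E$ does all the work; this is genuinely simpler than the physical-space Bernstein lemma, where one must control $\|\mathcal{F}^{-1}(\theta(\cdot)\xi^{\beta}|\xi|^{-2k})\|_{L^{1}}$ and invoke Young's inequality. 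Two cosmetic remarks: in the lower bound of (ii) the cutoff $\theta$ is superfluous, since the multinomial identity $f=\sum_{|\beta|=k}\binom{k}{\beta}\xi^{2\beta}|\xi|^{-2k}f$ already holds pointwise on $\operatorname{supp} f\subset\lambda\mathcal{C}$ where $|\xi|\geq r_{1}\lambda>0$; and in (iii) one should observe that $|\xi|^{m}\leq C\lambda^{m}$ on the ring for \emph{either} sign of $m$ (using the outer radius if $m\geq 0$ and the inner radius if $m<0$), which your remark that $|\xi|\approx\lambda$ implicitly covers. With the constants $R^{k}$, $(n/r_{1})^{k}$ and $|\mathcal{B}|^{\frac1q-\frac1p}$, $|\mathcal{C}|^{\frac1q-\frac1p}$ absorbed into $C^{k+1}$ and $C(\sigma,m)$, all three parts are established.
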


Let us state some properties of the homogeneous Fourier--Besov spaces (cf. Lemma 2.5 in \cite{ZZ19}).
\begin{lemma}\label{le2.8}
The following properties hold in the homogeneous Fourier--Besov spaces:
\begin{itemize}
  \item[(\romannumeral1)] There exists a universal constant $C$ such that
    \begin{align*}
     C^{-1}\|u\|_{F\dot{B}^{s}_{p,r}} \leq  \|\nabla u\|_{F\dot{B}^{s-1}_{p,r}} \leq C\|u\|_{F\dot{B}^{s}_{p,r}}.
    \end{align*}
  \item[(\romannumeral2)] For $1\leq p_1 \leq p_2\leq\infty$ and $1\leq r_1 \leq r_2\leq\infty$, we have the continuous imbedding $F\dot{B}^{s}_{p_2,r_2}(\mathbb{R}^n) \hookrightarrow F\dot{B}^{s-n(\frac{1}{p_1}-\frac{1}{p_2})}_{p_1,r_1}(\mathbb{R}^n)$.
  \item[(\romannumeral3)] For $s_1,s_2 \in \mathbb{R}$ such that $s_1<s_2$ and $\theta\in (0,1)$, there exists a constant $C$ such that
    \begin{align*}
     \|u\|_{F\dot{B}^{s_1\theta+s_2(1-\theta)}_{p,r}} \leq  C\|u\|^{\theta}_{F\dot{B}^{s_1}_{p,r}} \|u\|^{1-\theta}_{F\dot{B}^{s_2}_{p,r}}.
    \end{align*}
\end{itemize}
\end{lemma}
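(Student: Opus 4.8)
The plan is to read all three assertions directly off the defining quantity $\|f\|_{F\dot B^{s}_{p,r}}=\big\|\big(2^{js}\|\widehat{\Delta_{j}f}\|_{L^{p}}\big)_{j\in\mathbb Z}\big\|_{\ell^{r}(\mathbb Z)}$, so in each case I would first control the individual dyadic pieces $\widehat{\Delta_{j}u}$ and then perform an elementary manipulation at the level of the scalar sequence $\big(2^{js}\|\widehat{\Delta_{j}u}\|_{L^{p}}\big)_{j}$. The only genuine analytic input is the frequency localization $\operatorname{supp}\widehat{\Delta_{j}u}\subset 2^{j}\mathcal C$ with $\mathcal C=\{\xi:\tfrac34\le|\xi|\le\tfrac83\}$, together with the Bernstein-type estimates of Lemma \ref{le2.7}; everything beyond that is bookkeeping in $\ell^{r}$.

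For (i) I would use that $\widehat{\Delta_{j}\nabla u}(\xi)=i\xi\,\widehat{\Delta_{j}u}(\xi)$ is again supported in the annulus $2^{j}\mathcal C$, on which $\tfrac34\,2^{j}\le|\xi|\le\tfrac83\,2^{j}$. Hence the multiplier $|\xi|$ is comparable to $2^{j}$ from above and below on this support, which gives the two-sided blockwise bound $\|\widehat{\Delta_{j}\nabla u}\|_{L^{p}}\approx 2^{j}\|\widehat{\Delta_{j}u}\|_{L^{p}}$ (this is exactly Lemma \ref{le2.7}(ii) with $k=1$ and $\lambda=2^{j}$). Multiplying by $2^{j(s-1)}$ and taking the $\ell^{r}$ norm turns this into $\|\nabla u\|_{F\dot B^{s-1}_{p,r}}\approx\|u\|_{F\dot B^{s}_{p,r}}$, which is the claim.

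For (ii) the two indices are handled separately. The change of integrability is blockwise: applying Lemma \ref{le2.7}(i) with $k=0$, exponents $1\le p_{1}\le p_{2}$ and $\lambda=2^{j}$ to the annulus-supported piece $\widehat{\Delta_{j}u}$ yields $\|\widehat{\Delta_{j}u}\|_{L^{p_{1}}}\lesssim 2^{jn(1/p_{1}-1/p_{2})}\|\widehat{\Delta_{j}u}\|_{L^{p_{2}}}$. The gain $2^{jn(1/p_{1}-1/p_{2})}$ is \emph{exactly} compensated by the loss of regularity $s\mapsto s-n(1/p_{1}-1/p_{2})$, so that $2^{j(s-n(1/p_{1}-1/p_{2}))}\|\widehat{\Delta_{j}u}\|_{L^{p_{1}}}\lesssim 2^{js}\|\widehat{\Delta_{j}u}\|_{L^{p_{2}}}$ for every $j$. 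The change of the summation index is then the monotone inclusion of the sequence spaces $\ell^{r}(\mathbb Z)$ applied to the sequence $\big(2^{js}\|\widehat{\Delta_{j}u}\|_{L^{p_{2}}}\big)_{j}$, and combining the two steps gives the embedding. I expect (ii) to be the step requiring the most care, precisely because one must check both that the Bernstein factor matches the prescribed shift of the regularity index and that the sequence-space inclusion is invoked in the direction consistent with the prescribed ordering of $r_{1}$ and $r_{2}$.

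Finally, (iii) is pure Hölder at the level of frequencies. Writing $s=s_{1}\theta+s_{2}(1-\theta)$ and factoring each term as $2^{js}\|\widehat{\Delta_{j}u}\|_{L^{p}}=\big(2^{js_{1}}\|\widehat{\Delta_{j}u}\|_{L^{p}}\big)^{\theta}\big(2^{js_{2}}\|\widehat{\Delta_{j}u}\|_{L^{p}}\big)^{1-\theta}$, I would apply Hölder's inequality on $\mathbb Z$ with conjugate exponents $1/\theta$ and $1/(1-\theta)$ to the two sequences $\big(2^{js_{1}}\|\widehat{\Delta_{j}u}\|_{L^{p}}\big)_{j}$ and $\big(2^{js_{2}}\|\widehat{\Delta_{j}u}\|_{L^{p}}\big)_{j}$ in $\ell^{r}$; this produces the product $\|u\|_{F\dot B^{s_{1}}_{p,r}}^{\theta}\|u\|_{F\dot B^{s_{2}}_{p,r}}^{1-\theta}$ with constant $C=1$ (the case $r=\infty$ being immediate), as claimed.
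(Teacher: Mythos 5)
The paper does not prove Lemma \ref{le2.8} at all --- it simply cites Lemma 2.5 of \cite{ZZ19} --- so there is no internal argument to compare against; your blockwise strategy is exactly the standard one. Parts (i) and (iii) are correct as you describe them: for (i) the two-sided comparability $\|\widehat{\Delta_{j}\nabla u}\|_{L^{p}}\approx 2^{j}\|\widehat{\Delta_{j}u}\|_{L^{p}}$ is precisely Lemma \ref{le2.7}(ii) with $k=1$, $\lambda=2^{j}$, and for (iii) the convexity inequality $\sum_{j}a_{j}^{\theta r}b_{j}^{(1-\theta)r}\leq\bigl(\sum_{j}a_{j}^{r}\bigr)^{\theta}\bigl(\sum_{j}b_{j}^{r}\bigr)^{1-\theta}$ gives the interpolation with constant $1$, the case $r=\infty$ being trivial.

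In (ii), however, the step you yourself flag as delicate does not close. The Bernstein half is fine: Lemma \ref{le2.7}(i) with $k=0$ and $p_{1}\leq p_{2}$ gives $2^{j(s-n(1/p_{1}-1/p_{2}))}\|\widehat{\Delta_{j}u}\|_{L^{p_{1}}}\lesssim 2^{js}\|\widehat{\Delta_{j}u}\|_{L^{p_{2}}}$ for each $j$. But the monotone inclusion of sequence spaces is $\ell^{r_{1}}(\mathbb{Z})\subset\ell^{r_{2}}(\mathbb{Z})$ for $r_{1}\leq r_{2}$, i.e.\ $\|a\|_{\ell^{r_{2}}}\leq\|a\|_{\ell^{r_{1}}}$, so combining the two steps proves $F\dot{B}^{s}_{p_{2},r_{1}}\hookrightarrow F\dot{B}^{s-n(1/p_{1}-1/p_{2})}_{p_{1},r_{2}}$, with the \emph{smaller} summability index on the source space. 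The statement as printed puts $r_{2}$ on the source and $r_{1}$ on the target, which would require the reverse inequality $\|a\|_{\ell^{r_{1}}}\lesssim\|a\|_{\ell^{r_{2}}}$; this is false whenever $r_{1}<r_{2}$ (take $p_{1}=p_{2}$, $r_{1}=1$, $r_{2}=\infty$ and a sequence $2^{js}\|\widehat{\Delta_{j}u}\|_{L^{p}}$ that is bounded but not summable). So the lemma as stated has the roles of $r_{1}$ and $r_{2}$ interchanged; your argument proves the corrected version, and you should say so explicitly rather than asserting that "combining the two steps gives the embedding." Nothing downstream is affected, since the paper only ever invokes (ii) with $r_{1}=r_{2}=1$ (e.g.\ $F\dot{B}^{1/2}_{2,1}\hookrightarrow F\dot{B}^{0}_{3/2,1}$ in the proofs of Lemmas \ref{le4.3} and \ref{le5.3}).
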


We also need to establish the following generalized product estimates between two
distributions in the homogeneous Fourier--Besov spaces.

\begin{lemma}\label{le2.9}
Let $s>0$ and $1 \leq p, p_1, p_2 \leq \infty$ such that
\begin{equation*}
  1+\frac{1}{p}=\frac{1}{p_{1}}+\frac{1}{p_{2}}.
\end{equation*}
Then there exists a constant $C>0$ such that
\begin{equation}\label{eq2.7}
\|uv\|_{F\dot{B}_{p,1}^{s}} \leq C\left( \|u\|_{F\dot{B}_{p_1,1}^{s}} \|v\|_{F\dot{B}_{p_2,1}^{0}}+\|u\|_{F\dot{B}_{p_2,1}^{0}} \|v\|_{F\dot{B}_{p_1,1}^{s}}\right).
\end{equation}
\end{lemma}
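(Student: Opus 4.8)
The plan is to transplant Bony's paraproduct decomposition to the Fourier side. Since $\widehat{uv}$ equals, up to a fixed constant coming from the normalization of $\mathcal{F}$, the convolution $\widehat{u}\ast\widehat{v}$, and since the hypothesis $1+\frac1p=\frac1{p_1}+\frac1{p_2}$ is precisely the exponent relation in Young's convolution inequality $\|f\ast g\|_{L^p}\le\|f\|_{L^{p_1}}\|g\|_{L^{p_2}}$, the whole estimate reduces to a careful bookkeeping of frequency supports. First I would write
\[
uv=T_uv+T_vu+R(u,v),\quad T_uv=\sum_{j}S_{j-1}u\,\Delta_j v,\quad R(u,v)=\sum_{|j-k|\le1}\Delta_j u\,\Delta_k v,
\]
and estimate the $F\dot{B}^{s}_{p,1}$-norm of each of the three pieces separately.

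For the paraproduct $T_uv$, the quasi-orthogonality property \eqref{eq2.6} shows that $\widehat{S_{j-1}u\,\Delta_j v}$ is supported in an annulus $\{|\xi|\approx 2^{j}\}$, so $\widehat{\Delta_k(T_uv)}$ is nonzero only for $|k-j|\le N_0$ with $N_0$ fixed. Applying Young's inequality on the Fourier side gives $\|\widehat{\Delta_k(S_{j-1}u\,\Delta_j v)}\|_{L^p}\lesssim\|\widehat{S_{j-1}u}\|_{L^{p_2}}\|\widehat{\Delta_j v}\|_{L^{p_1}}$; bounding $\|\widehat{S_{j-1}u}\|_{L^{p_2}}\le\sum_{j'}\|\widehat{\Delta_{j'}u}\|_{L^{p_2}}=\|u\|_{F\dot{B}^{0}_{p_2,1}}$, using $2^{ks}\approx 2^{js}$ on the support, and summing in $k$, I expect $\|T_uv\|_{F\dot{B}^{s}_{p,1}}\lesssim\|u\|_{F\dot{B}^{0}_{p_2,1}}\|v\|_{F\dot{B}^{s}_{p_1,1}}$, and by symmetry $\|T_vu\|_{F\dot{B}^{s}_{p,1}}\lesssim\|v\|_{F\dot{B}^{0}_{p_2,1}}\|u\|_{F\dot{B}^{s}_{p_1,1}}$. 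These two bounds hold for every $s\in\mathbb{R}$.

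The remainder is where the hypothesis $s>0$ becomes essential, and I expect this to be the main obstacle. Here the two frequencies are comparable, so $\widehat{\Delta_j u\,\Delta_k v}$ with $|j-k|\le1$ is supported in a ball $\{|\xi|\lesssim 2^{j}\}$ rather than an annulus; consequently $\widehat{\Delta_q R(u,v)}$ collects every block with $j\ge q-N_1$. Writing $a_j=\|\widehat{\Delta_j u}\|_{L^{p_1}}$ and $b_j=\|\widehat{\Delta_j v}\|_{L^{p_2}}$, Young's inequality and an interchange of the order of summation yield
\[
\sum_{q}2^{qs}\|\widehat{\Delta_q R(u,v)}\|_{L^p}\lesssim\sum_{j}a_j\,b_j\!\!\sum_{q\le j+N_1}\!\!2^{qs}\lesssim\sum_{j}2^{js}a_j\,b_j,
\]
where the inner geometric series $\sum_{q\le j+N_1}2^{qs}\approx 2^{js}$ converges precisely because $s>0$. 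Estimating $\sup_j b_j\le\|v\|_{F\dot{B}^{0}_{p_2,1}}$ then gives $\|R(u,v)\|_{F\dot{B}^{s}_{p,1}}\lesssim\|u\|_{F\dot{B}^{s}_{p_1,1}}\|v\|_{F\dot{B}^{0}_{p_2,1}}$. Adding the three contributions produces exactly the two terms on the right-hand side of \eqref{eq2.7}, up to the generic constant. The only genuinely delicate point is the remainder, where positivity of $s$ is used to sum the low-frequency tail; the paraproducts require only Young's inequality and the support localization recorded in \eqref{eq2.6}.
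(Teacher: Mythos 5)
Your proof is correct and follows the same route as the paper: Bony's decomposition, Young's convolution inequality on the Fourier side (which is exactly where the relation $1+\frac{1}{p}=\frac{1}{p_1}+\frac{1}{p_2}$ enters), and positivity of $s$ to sum the low-frequency tail in the remainder. The one place where you genuinely diverge is the paraproduct $T_uv$: you put the low-frequency factor $\widehat{S_{j-1}u}$ in $L^{p_2}$ and the high-frequency factor $\widehat{\Delta_j v}$ in $L^{p_1}$, so the weight $2^{js}$ lands on the factor that carries it for free and you obtain $\|u\|_{F\dot{B}^{0}_{p_2,1}}\|v\|_{F\dot{B}^{s}_{p_1,1}}$, which is the second term of \eqref{eq2.7}. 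The paper instead measures $\widehat{\Delta_{k'}u}$ in $L^{p_1}$ and then tries to transfer the weight onto the low-frequency blocks of $u$ via $\sum_{k'\leq k-2}\|\widehat{\Delta_{k'}u}\|_{L^{p_1}}\lesssim 2^{-ks}\sum_{k'\leq k-2}2^{k's}\|\widehat{\Delta_{k'}u}\|_{L^{p_1}}$; for $s>0$ this inequality goes the wrong way (the factor $2^{(k'-k)s}$ tends to $0$ as $k'\to-\infty$), and indeed the block estimate \eqref{eq2.8} as stated fails when $u$ is concentrated at frequencies far below $2^{j}$. Your assignment of exponents is the one that actually closes the argument and still produces the right-hand side of \eqref{eq2.7}; keep it.
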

\begin{proof}
Thanks to Bony's paraproduct decomposition (see \cite{B81}), we have
\begin{equation*}
  uv=T_u v+T_v u+R(u,v),
\end{equation*}
where
\begin{equation*}
\begin{cases}
  T_{u}v:=\sum_{j\in\mathbb{Z}}\sum_{k\leq j-2}\Delta_{k}u\Delta_{j}v=\sum_{j\in\mathbb{Z}}S_{j-1}u\Delta_{j}v,\\
  R(u,v):=\sum_{j\in\mathbb{Z}}\sum_{|j-j'| \leq 2}\Delta_{j}u\Delta_{j'}v.
\end{cases}
\end{equation*}
For $T_{u}v$,  using the condition $s>0$, one can derive from \eqref{eq2.6} that
\begin{align}\label{eq2.8}
    \|\mathcal{F}[\Delta_j{T_{u}v}]\|_{L^{p}}&\lesssim \|\sum_{|k-j|\leq 4} \mathcal{F}[\Delta_j(S_{k-1}u \Delta_k v)]\|_{L^{p}} \nonumber \\
    &\lesssim \sum_{|k-j|\leq 4}\sum_{k' \leq k-2}\|\mathcal{F}[\Delta_{k'}u]\|_{L^{p_{1}}} \|\mathcal{F}[\Delta_k v]\|_{L^{p_2}} \nonumber \\
     &\lesssim \sum_{|k-j|\leq 4}2^{-ks}\sum_{k' \leq k-2}2^{k's}\|\mathcal{F}[\Delta_{k'}u]\|_{L^{p_{1}}} \|\mathcal{F}[\Delta_k v]\|_{L^{p_2}} \nonumber \\
    &\lesssim 2^{-js}\sum_{|k-j|\leq 4}  \|\mathcal{F}[\Delta_k' v]\|_{L^{p_2}}\|u\|_{F\dot{B}_{p_1,1}^{s}}\nonumber \\
     &\lesssim 2^{-js}d_{j}\|u\|_{F\dot{B}_{p_1,1}^{s}} \|v\|_{F\dot{B}_{p_2,1}^{0}}.
\end{align}
Similarly, for $T_{v} u$, we have
\begin{align}\label{eq2.9}
    \|\mathcal{F}[\Delta_j{T_{v}u}]\|_{L^{p}}\lesssim 2^{-js}d_{j}\|u\|_{F\dot{B}_{p_2,1}^{0}} \|v\|_{F\dot{B}_{p_1,1}^{s}}.
\end{align}
For the remaining term $R(u,v)$,  we can directly bound it as
\begin{align}\label{eq2.10}
    \|\mathcal{F}[\Delta_j{R(u,v)}]\|_{L^{p}}&\lesssim \|\sum_{k \geq j-4}\sum_{|k-k'| \leq 2} \mathcal{F}[\Delta_j(\Delta_k u \Delta_{k'} v)]\|_{L^{p}}\nonumber \\
    &\lesssim \sum_{k \geq j-4}\sum_{|k-k'| \leq 2} \|\mathcal{F}[\Delta_k u]\|_{L^{p_1}} \|\mathcal{F}[\Delta_{k'} v]\|_{L^{p_2}} \nonumber \\
    &\lesssim 2^{-js}\sum_{k \geq j-4}2^{-(k-j)s}2^{ks}\|\mathcal{F}[\Delta_k u]\|_{L^{p_1}}\|v\|_{F\dot{B}_{p_2,1}^{0}}\nonumber \\
    &\lesssim 2^{-js}d_{j}\|u\|_{F\dot{B}_{p_1,1}^{s}} \|v\|_{F\dot{B}_{p_2,1}^{0}}.
\end{align}
Combining the above estimates \eqref{eq2.8}--\eqref{eq2.10} together, we get \eqref{eq2.7}. We have completed the proof of Lemma \ref{le2.9}.
\end{proof}

\begin{lemma}\label{le2.10}
Let $s_1, s_2 \in \mathbb{R}$ and $1 \leq p_1,p_2 \leq \infty$.  Assume that
\begin{equation*}
   s_1 \leq n\min\{1-\frac{1}{p_1}, 1-\frac{1}{p_2}\}, \ \ s_2\leq n(1-\frac{1}{p_2})
\end{equation*}
and
\begin{equation*}
  s_1+s_2>\max\{0, n(1-\frac{1}{p_1}-\frac{1}{p_2})\}.
\end{equation*}
Then there exists a constant $C>0$ such that
\begin{equation}\label{eq2.11}
\|uv\|_{F\dot{B}_{p_1,1}^{s_1+s_2-n(1-\frac{1}{p_2})}} \leq C \|u\|_{F\dot{B}_{p_1,1}^{s_1}} \|v\|_{F\dot{B}_{p_2,1}^{s_2}}.
\end{equation}
\end{lemma}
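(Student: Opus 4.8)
The plan is to follow the same route as in Lemma \ref{le2.9}: decompose the product by Bony's paraproduct,
$$uv = T_uv + T_vu + R(u,v),$$
and estimate the three pieces separately in $F\dot B^{\sigma}_{p_1,1}$ with $\sigma := s_1+s_2-n(1-\tfrac1{p_2})$. Throughout I would normalise by writing $\|\widehat{\Delta_k u}\|_{L^{p_1}} = 2^{-ks_1}a_k\|u\|_{F\dot B^{s_1}_{p_1,1}}$ and $\|\widehat{\Delta_k v}\|_{L^{p_2}} = 2^{-ks_2}b_k\|v\|_{F\dot B^{s_2}_{p_2,1}}$ with $\{a_k\},\{b_k\}$ nonnegative and $\ell^1(\mathbb{Z})$-summable. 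The structural fact driving everything is that on the Fourier side the product is a convolution, so Young's inequality gives $\|\widehat{\Delta_ku}\ast\widehat{\Delta_{k'}v}\|_{L^{p_1}}\le\|\widehat{\Delta_ku}\|_{L^a}\|\widehat{\Delta_{k'}v}\|_{L^b}$ whenever $1+\tfrac1{p_1}=\tfrac1a+\tfrac1b$; combined with Lemma \ref{le2.7}(i), converting the factors back to $L^{p_1}$ and $L^{p_2}$ (which forces $a\le p_1$, $b\le p_2$) always costs a total frequency factor of exponent $n(\tfrac1a+\tfrac1b-\tfrac1{p_1}-\tfrac1{p_2})=n(1-\tfrac1{p_2})$. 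This is precisely the source of the regularity loss in the target index $\sigma$.

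For $T_uv=\sum_kS_{k-1}u\,\Delta_kv$, only $|k-m|\le4$ contribute to $\Delta_m$, so the $v$-factor sits at the output frequency while $u$ is summed over low frequencies. Estimating $\|\widehat{S_{k-1}u}\|_{L^a}\le\sum_{k'\le k-2}\|\widehat{\Delta_{k'}u}\|_{L^a}$ by Bernstein, the low-frequency geometric series converges exactly when $s_1<n(\tfrac1a-\tfrac1{p_1})$; optimising the admissible split $(a,b)$ maximises $n(\tfrac1a-\tfrac1{p_1})$ at $n\min\{1-\tfrac1{p_1},1-\tfrac1{p_2}\}$, the first hypothesis, and after collecting powers of $2^k$ the bound reduces to $\|\widehat{\Delta_mT_uv}\|_{L^{p_1}}\lesssim\|u\|_{F\dot B^{s_1}_{p_1,1}}\|v\|_{F\dot B^{s_2}_{p_2,1}}\sum_{|k-m|\le4}2^{-k\sigma}d_k$. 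The term $T_vu$ is the mirror image: here $u$ stays in $L^{p_1}$ at the output frequency and $v$ is converted fully to $L^1$ (Bernstein with the full budget $n(1-\tfrac1{p_2})$), so the low-frequency summation of $v$ converges precisely under $s_2<n(1-\tfrac1{p_2})$, the second hypothesis. In both cases, multiplying by $2^{m\sigma}$ and using $|k-m|\le4$ yields a generic $\ell^1$ sequence $d_m$; the borderline equalities $s_1=n\min\{\cdots\}$ and $s_2=n(1-\tfrac1{p_2})$ are absorbed by the $\ell^1$-summability of the opposite factor.

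The remainder $R(u,v)=\sum_k\sum_{|k-k'|\le2}\Delta_ku\,\Delta_{k'}v$ is the main obstacle, since both factors now live at the same high frequency $2^k\gtrsim2^m$ and the naive estimate above would only give $s_1+s_2>n(1-\tfrac1{p_2})$, which is stronger than assumed. The resolution is to spend the Bernstein budget on the \emph{output} frequency $2^m$ rather than on the inputs: because $\Delta_ku\,\Delta_{k'}v$ is spectrally localised in a ball of radius $\sim2^k$ and $\Delta_m$ re-localises it at $2^m\le2^k$, I would first estimate the convolution in the largest admissible integrability and then raise it to $L^{p_1}$ by Bernstein on the output. Concretely, when $\tfrac1{p_1}+\tfrac1{p_2}\ge1$ I use Young into $L^{p}$ with $1+\tfrac1p=\tfrac1{p_1}+\tfrac1{p_2}$ (no input loss) followed by $L^p\hookrightarrow L^{p_1}$ at frequency $2^m$; when $\tfrac1{p_1}+\tfrac1{p_2}<1$ I instead use Hölder into $L^\infty$ followed by $L^\infty\hookrightarrow L^{p_1}$ at frequency $2^m$. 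Either way the input exponent is reduced to $\gamma=\max\{0,n(1-\tfrac1{p_1}-\tfrac1{p_2})\}$, and after multiplying by $2^{m\sigma}$ one is left with $\sum_{k\ge m-4}2^{(m-k)(s_1+s_2-\gamma)}a_kb_k$, whose summation over $k$ and then $m$ converges exactly under the strict inequality $s_1+s_2>\max\{0,n(1-\tfrac1{p_1}-\tfrac1{p_2})\}$, the third hypothesis. Adding the three contributions and taking the $\ell^1(\mathbb{Z})$-norm in $m$ of $2^{m\sigma}\|\widehat{\Delta_m(uv)}\|_{L^{p_1}}$ with $\|d_m\|_{\ell^1}=1$ then yields \eqref{eq2.11}; the delicate point is entirely in the remainder, where charging the Bernstein loss to the output rather than the input frequency is what produces the sharp condition.
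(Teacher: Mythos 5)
Your proposal is correct and follows essentially the same route as the paper: Bony decomposition, with $T_vu$ controlled by pushing the low-frequency factor into $L^1$ (giving the condition on $s_2$), $T_uv$ by the Young/Bernstein split whose optimization reproduces the paper's two cases $p_1\le p_2$ and $p_1>p_2$ (giving $s_1\le n\min\{1-\frac{1}{p_1},1-\frac{1}{p_2}\}$), and the remainder by spending the Bernstein loss on the output frequency in the two regimes $\frac{1}{p_1}+\frac{1}{p_2}\ge 1$ and $<1$ (giving the strict condition on $s_1+s_2$). The only difference is presentational: you phrase the paraproduct case analysis as an optimization over admissible Young exponents, whereas the paper writes out the two cases explicitly.
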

\begin{proof}
For $T_v u$, applying Lemma \ref{le2.7} gives us to
\begin{align}\label{eq2.12}
    \|\mathcal{F}[\Delta_j&{T_v u}]\|_{L^{p_1}}\lesssim \|\sum_{|k-j|\leq 4} \mathcal{F}[\Delta_j(S_{k-1}v \Delta_k u)]\|_{L^{p_1}} \nonumber \\
    &\lesssim \sum_{|k-j|\leq 4}\sum_{k' < k-1}\|\mathcal{F}[\Delta_{k'}v]\|_{L^{1}} \|\mathcal{F}[\Delta_k u]\|_{L^{p_1}} \nonumber \\
    &\lesssim \sum_{|k-j|\leq 4}  \sum_{k' \leq k-2} 2^{k'n (1-\frac{1}{p_2})}\|\mathcal{F}[\Delta_{k'}v]\|_{L^{p_2}} \|\mathcal{F}[\Delta_k u]\|_{L^{p_1}} \nonumber \\
    &\lesssim \sum_{|k-j|\leq 4}  \sum_{k' \leq k-2} 2^{k'(n(1-\frac{1}{p_2})-s_2)}  2^{-ks_1}2^{k' s_2}\|\mathcal{F}[\Delta_k' v]\|_{L^{p_2}}2^{k s_1}\|\mathcal{F}[\Delta_{k}u]\|_{L^{p_1}}  \nonumber \\
    &\lesssim d_j 2^{-j (s_1+s_2-n(1-\frac{1}{p_2}))} \|u\|_{F\dot{B}_{p_1,1}^{s_1}} \|v\|_{F\dot{B}_{p_2,1}^{s_2}}.
\end{align}
For $T_u v$, we split it into the following two cases: in the case of $p_1 \leq p_2$,  one gets
\begin{align}\label{eq2.13}
     \|\mathcal{F}[\Delta_j&{T_u v}]\|_{L^{p_1}}\lesssim 2^{jn(\frac{1}{p_1}-\frac{1}{p_2})}\|\sum_{|k-j|\leq 4} \mathcal{F}[\Delta_j(S_{k-1}u \Delta_k v)]\|_{L^{p_2}} \nonumber \\
     &\lesssim 2^{jn(\frac{1}{p_1}-\frac{1}{p_2})}\sum_{|k-j|\leq 4} \sum_{k' \leq k-2}  \|\mathcal{F}[\Delta_{k'}u]\|_{L^{1}} \|\mathcal{F}[\Delta_k v]\|_{L^{p_2}} \nonumber \\
     &\lesssim 2^{jn(\frac{1}{p_1}-\frac{1}{p_2})}\sum_{|k-j|\leq 4}  \sum_{k'\leq k-2} 2^{k'n (1-\frac{1}{p_1})}\|\mathcal{F}[\Delta_{k'}u]\|_{L^{p_1}} \|\mathcal{F}[\Delta_k v]\|_{L^{p_2}} \nonumber \\
     &\lesssim 2^{jn(\frac{1}{p_1}-\frac{1}{p_2})}\sum_{|k-j|\leq 4}  \sum_{k' \leq k-2}  2^{k'(n(1-\frac{1}{p_1})-s_1)}  2^{-ks_2}2^{k' s_1}\|\mathcal{F}[\Delta_{k'}u]\|_{L^{p_1}} 2^{k s_2}\|\mathcal{F}[\Delta_k v]\|_{L^{p_2}} \nonumber \\
     &\lesssim d_j 2^{-j (s_1+s_2-n(1-\frac{1}{p_2}))} \|u\|_{F\dot{B}_{p_1,1}^{s_1}} \|v\|_{F\dot{B}_{p_2,1}^{s_2}};
\end{align}
while in the case of $p_1 > p_2$, using the condition $ s_1\leq n\min\{1-\frac{1}{p_1}, 1-\frac{1}{p_2}\}$, one can deduce that
\begin{align}\label{eq2.14}
    \|&\mathcal{F}[\Delta_j{T_u v}]\|_{L^{p_1}}\lesssim \|\sum_{|k-j|\leq 4} \mathcal{F}[\Delta_j(S_{k-1}u \Delta_k v)]\|_{L^{p_1}} \nonumber \\
     &\lesssim \sum_{|k-j|\leq 4} \sum_{k' \leq k-2}  \|\mathcal{F}[\Delta_{k'}u]\|_{L^{\tilde{p}}} \|\mathcal{F}[\Delta_k v]\|_{L^{p_2}}  \ \  \Big( \frac{1}{p_1}\leq\frac{1}{p_1}+1-\frac{1}{p_2}=\frac{1}{\tilde{p}} \Big)\nonumber \\
     &\lesssim \sum_{|k-j|\leq 4}  \sum_{k' \leq k-2} 2^{k'n {(\frac{1}{\tilde{p}}-\frac{1}{p_1})}}\|\mathcal{F}[\Delta_{k'}u]\|_{L^{p_1}} \|\mathcal{F}[\Delta_k v]\|_{L^{p_2}} \nonumber \\
     &\lesssim \sum_{|k-j|\leq 4}  \sum_{k' \leq k-2}  2^{k'(n(1-\frac{1}{p_2})-s_1)}  2^{-ks_2}2^{k' s_1}\|\mathcal{F}[\Delta_{k'}u]\|_{L^{p_1}} 2^{k s_2}\|\mathcal{F}[\Delta_k v]\|_{L^{p_2}} \nonumber \\
     &\lesssim d_j 2^{-j (s_1+s_2-n(1-\frac{1}{p_2}))}  \|u\|_{F\dot{B}_{p_1,1}^{s_1}} \|v\|_{F\dot{B}_{p_2,1}^{s_2}}.
\end{align}
For the remaining term $R(u,v)$, we also need to consider the following two cases: in the case of  $\frac{1}{p_1}+\frac{1}{p_2}\geq 1$, we choose $\bar{p}$ such that $1+\frac{1}{\bar{p}}=\frac{1}{p_1}+\frac{1}{p_2}$, then using the condition $s_1+s_2>0$ to obtain that
\begin{align}\label{eq2.15}
    \|\mathcal{F}[\Delta_j&{R(u,v)}]\|_{L^{p_1}}\lesssim 2^{jn(1-\frac{1}{p_2})} \|\sum_{k \geq j-4}\sum_{|k-k'| \leq 2} \mathcal{F}[\Delta_j(\Delta_k u \Delta_{k'} v)]\|_{L^{\bar{p}}}  \ \  \Big( p_1 \leq\bar{p} \Big)\nonumber \\
    &\lesssim 2^{jn(1-\frac{1}{p_2})} \sum_{k \geq j-4}\sum_{|k-k'| \leq 2} \|\mathcal{F}[\Delta_k u]\|_{L^{p_1}} \|\mathcal{F}[\Delta_{k'} v]\|_{L^{p_2}} \nonumber \\
    &\lesssim 2^{jn(1-\frac{1}{p_2})} \sum_{k \geq j-4}\sum_{|k-k'| \leq 2}  2^{-k (s_1+s_2)}2^{k s_1} \|\mathcal{F}[\Delta_k u]\|_{L^{p_1}} 2^{k' s_2}\|\mathcal{F}[\Delta_{k'} v]\|_{L^{p_2}} \nonumber \\
    &\lesssim d_j 2^{-j (s_1+s_2-n(1-\frac{1}{p_2}))} \|u\|_{F\dot{B}_{p_1,1}^{s_1}} \|v\|_{F\dot{B}_{p_2,1}^{s_2}};
\end{align}
while in the case of $\frac{1}{p_1}+\frac{1}{p_2}<1$, using the condition $s_1+s_2>n(1-\frac{1}{p_1}-\frac{1}{p_2})$, we obtain
\begin{align}\label{eq2.16}
    \|\mathcal{F}[\Delta_j&{R(u,v)}]\|_{L^{p_1}}\lesssim 2^{j\frac{n}{p_1}} \|\sum_{k \geq j-4}\sum_{|k-k'| \leq 2} \mathcal{F}[\Delta_j(\Delta_k u \Delta_{k'} v)]\|_{L^\infty} \nonumber \\
    &\lesssim 2^{j\frac{n}{p_1}} \sum_{k \geq j-4}\sum_{|k-k'| \leq 2} \|\mathcal{F}[\Delta_k u]\|_{L^{p'_2}} \|\mathcal{F}[\Delta_{k'} v]\|_{L^{p_2}} \ \  \Big( \frac{1}{p_2}+\frac{1}{p'_2}=1 \Big) \nonumber \\
    &\lesssim 2^{j\frac{n}{p_1}} \sum_{k \geq j-4}\sum_{|k-k'| \leq 2} 2^{kn(1-\frac{1}{p_1}-\frac{1}{p_2})}\|\mathcal{F}[\Delta_k u]\|_{L^{p_1}} \|\mathcal{F}[\Delta_{k'} v]\|_{L^{p_2}}  \ \ (p_1>p'_2) \nonumber \\
    &\lesssim 2^{j\frac{n}{p_1}} \sum_{k \geq j-4}\sum_{|k-k'| \leq 2} 2^{k(n(1-\frac{1}{p_1}-\frac{1}{p_2})-s_{1}-s_{2})}
      2^{k s_1}\|\mathcal{F}[\Delta_k u]\|_{L^{p_1}} 2^{k' s_2}\|\mathcal{F}[\Delta_{k'} v]\|_{L^{p_2}} \nonumber \\
    &\lesssim d_j 2^{-j (s_1+s_2-n(1-\frac{1}{p_2}))}  \|u\|_{F\dot{B}_{p_1,1}^{s_1}} \|v\|_{F\dot{B}_{p_2,1}^{s_2}}.
\end{align}
Putting the above estimates \eqref{eq2.12}--\eqref{eq2.16} together, we get \eqref{eq2.11}. We have completed the proof of Lemma \ref{le2.10}.

\end{proof}

\subsection{Variable Fourier--Besov spaces}
 The Besov spaces $\dot{B}^{s(\cdot)}_{p(\cdot),r}(\mathbb{R}^{n})$ with variable regularity and integrability were introduced  by Vyb\'{r}al \cite{V09},  Kempka \cite{K09} and Almeida--H\"{a}st\"{o} \cite{AH10}. Inspired by the ideas in \cite{AH10},  Ru \cite{R18} introduced the variable Fourier--Besov spaces to establish the well-posedness of the 3D Navier--Stokes equations in such spaces.  Let us recall its  definition. Let $p(\cdot),r(\cdot)\in \mathcal{P}_{0}(\mathbb{R}^{n})$. The mixed Lebesgue-sequence space $\ell^{r(\cdot)}(L^{p(\cdot)})$ is defined on sequences of $L^{p(\cdot)}$-functions by the modular
\begin{equation}\label{eq2.17}
 \varrho_{\ell^{r(\cdot)}(L^{p(\cdot)})}\left(\{f_{j}\}_{j\in\mathbb{Z}}\right):=\sum_{j\in \mathbb{Z}}\inf\left\{\lambda_{j}>0\Big{|} \varrho_{p(\cdot)}\left(\frac{f_{j}}{\lambda_{j}^{\frac{1}{r(\cdot)}}}\right) \leq 1 \right\}.
\end{equation}
The norm is defined from this as usual:
\begin{equation*}
 \left\|\{f_{j}\}_{j\in\mathbb{Z}}\right\|_{\ell^{r(\cdot)}(L^{p(\cdot)})}:=\inf\left\{\mu>0\Big{|} \varrho_{\ell^{r(\cdot)}(L^{p(\cdot)})}\left(\{\frac{f_{j}}{\mu}\}_{j\in\mathbb{Z}}\right) \leq 1 \right\}.
\end{equation*}
Since we assume that $r^{+}<\infty$,  it holds that
\begin{equation*}
 \varrho_{\ell^{r(\cdot)}(L^{p(\cdot)})}\left(\{f_{j}\}_{j\in\mathbb{Z}}\right)=\sum_{j\in\mathbb{Z}}\left\||f_{j}|^{r(\cdot)}\right\|_{\frac{p(\cdot)}{r(\cdot)}}.
\end{equation*}

Let us recall the definition of the  Fourier--Besov space $F\dot{B}^{s(\cdot)}_{p(\cdot),r(\cdot)}(\mathbb{R}^{n})$ with variable exponents.

\begin{definition}\label{de2.11}
Let $s(\cdot), p(\cdot),r(\cdot)\in \mathcal{P}_{0}^{\log}(\mathbb{R}^{n})$. We define the variable exponent Fourier--Besov space $F\dot{B}^{s(\cdot)}_{p(\cdot),r(\cdot)}(\mathbb{R}^{n})$  as the collection of all distributions $f\in \mathcal{S}'_{h}(\mathbb{R}^{n})$ such that
\begin{equation}\label{eq2.18}
\left\|f\right\|_{F\dot{B}^{s(\cdot)}_{p(\cdot),r(\cdot)}}:=\left\|\{2^{js(\cdot)}\varphi_{j}\widehat{f}\}_{j\in\mathbb{Z}}\right\|_{\ell^{r(\cdot)}(L^{p(\cdot)})}<\infty.
\end{equation}
\end{definition}

We also need the following Chemin--Lerner type mixed time-space spaces in terms of the Fourier--Besov space with variable exponent.

\begin{definition}\label{de2.12}
Let $s(\cdot), p(\cdot)\in \mathcal{P}_{0}^{\log}(\mathbb{R}^{n})$ and  $1\leq \rho, r\leq\infty$. We define the Chemin--Lerner mixed time-space $\mathcal{L}^{\rho}(0,T; \dot{B}^{s(\cdot)}_{p(\cdot),r}(\mathbb{R}^{n}))$ as the completion of $\mathcal{C}([0,T]; \mathcal{S}(\mathbb{R}^{n}))$ by the norm
\begin{equation}\label{eq2.19}
\left\|f\right\|_{\mathcal{L}^{\rho}_{T}(F\dot{B}^{s(\cdot)}_{p(\cdot),r})}
:=\left(\sum_{j\in\mathbb{Z}}\left\|2^{js(\cdot)}\varphi_{j}\widehat{f}\right\|^{r}_{L^{\rho}_{T}(L^{p(\cdot)})}\right)^{\frac{1}{r}}<\infty
\end{equation}
with the usual change if $\rho=\infty$
or $r=\infty$. If $T=\infty$, we use $\left\|f\right\|_{\mathcal{L}^{\rho}_{t}(F\dot{B}^{s(\cdot)}_{p(\cdot),r})}$ instead of $\left\|f\right\|_{\mathcal{L}^{\rho}_{\infty}(F\dot{B}^{s(\cdot)}_{p(\cdot),r})}$.
\end{definition}

Finally, let us recall the following existence and uniqueness result for an abstract operator equation in a generic Banach space (cf. \cite{L02}, Theorem 13.2).
\begin{proposition}\label{pro2.13}
Let $\mathcal{X}$ be a Banach space and
$\mathcal{B}:\mathcal{X}\times\mathcal{X}\rightarrow\mathcal{X}$  a
bilinear bounded operator. Assume that for any $u,v\in
\mathcal{X}$, we have
$$
  \|\mathcal{B}(u,v)\|_{\mathcal{X}}\leq
  C\|u\|_{\mathcal{X}}\|v\|_{\mathcal{X}}.
$$
Then for any $y\in \mathcal{X}$ such that $\|y\|_{\mathcal{X}}\leq
\eta<\frac{1}{4C}$, the equation $u=y+\mathcal{B}(u,u)$ has a
solution $u$ in $\mathcal{X}$. Moreover, this solution is the only
one such that $\|u\|_{\mathcal{X}}\leq 2\eta$, and depends
continuously on $y$ in the following sense: if
$\|\widetilde{y}\|_{\mathcal{X}}\leq \eta$,
$\widetilde{u}=\widetilde{y}+\mathcal{B}(\widetilde{u},\widetilde{u})$
and $\|\widetilde{u}\|_{\mathcal{X}}\leq 2\eta$, then
$$
  \|u-\widetilde{u}\|_{\mathcal{X}}\leq \frac{1}{1-4\eta
  C}\|y-\widetilde{y}\|_{\mathcal{X}}.
$$
\end{proposition}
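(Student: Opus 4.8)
The plan is to solve $u = y + \mathcal{B}(u,u)$ by the classical Picard iteration scheme, using the bilinear bound on $\mathcal{B}$ to drive a contraction argument. First I would define the sequence $\{u_n\}_{n\geq 0}\subset\mathcal{X}$ by $u_0 := y$ and $u_{n+1} := y + \mathcal{B}(u_n,u_n)$. The initial step is to trap this sequence in the closed ball of radius $2\eta$. Arguing by induction, if $\|u_n\|_{\mathcal{X}}\leq 2\eta$, then the bilinear estimate gives
\[
\|u_{n+1}\|_{\mathcal{X}} \leq \|y\|_{\mathcal{X}} + C\|u_n\|_{\mathcal{X}}^2 \leq \eta + 4C\eta^2 = \eta(1+4C\eta) < 2\eta,
\]
where the last inequality uses the hypothesis $\eta < \frac{1}{4C}$. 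Hence $\|u_n\|_{\mathcal{X}}\leq 2\eta$ for all $n$.

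The second step is to show that $\{u_n\}$ is Cauchy. Using bilinearity I would write
\[
u_{n+1}-u_n = \mathcal{B}(u_n,u_n)-\mathcal{B}(u_{n-1},u_{n-1}) = \mathcal{B}(u_n,u_n-u_{n-1}) + \mathcal{B}(u_n-u_{n-1},u_{n-1}),
\]
so that, invoking the bound on $\mathcal{B}$ together with the uniform estimate $\|u_n\|_{\mathcal{X}},\|u_{n-1}\|_{\mathcal{X}}\leq 2\eta$,
\[
\|u_{n+1}-u_n\|_{\mathcal{X}} \leq C\big(\|u_n\|_{\mathcal{X}}+\|u_{n-1}\|_{\mathcal{X}}\big)\|u_n-u_{n-1}\|_{\mathcal{X}} \leq 4C\eta\,\|u_n-u_{n-1}\|_{\mathcal{X}}.
\]
Since $4C\eta<1$, the successive differences decay geometrically, so $\{u_n\}$ is Cauchy; by completeness of $\mathcal{X}$ it converges to a limit $u$ with $\|u\|_{\mathcal{X}}\leq 2\eta$. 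Passing to the limit in the recursion, which is legitimate because the boundedness of $\mathcal{B}$ makes it continuous on $\mathcal{X}\times\mathcal{X}$, yields $u = y+\mathcal{B}(u,u)$, establishing existence.

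For uniqueness within the ball of radius $2\eta$ and for continuous dependence, the same one-line contraction estimate does all the work. If $u$ and $\widetilde{u}$ solve the equation with data $y$ and $\widetilde{y}$ and both satisfy $\|\cdot\|_{\mathcal{X}}\leq 2\eta$, then
\[
u-\widetilde{u} = (y-\widetilde{y}) + \mathcal{B}(u,u-\widetilde{u}) + \mathcal{B}(u-\widetilde{u},\widetilde{u}),
\]
whence $\|u-\widetilde{u}\|_{\mathcal{X}}\leq \|y-\widetilde{y}\|_{\mathcal{X}} + 4C\eta\,\|u-\widetilde{u}\|_{\mathcal{X}}$, and rearranging (using $4C\eta<1$) gives the stated stability bound $\|u-\widetilde{u}\|_{\mathcal{X}}\leq \frac{1}{1-4\eta C}\|y-\widetilde{y}\|_{\mathcal{X}}$; taking $\widetilde{y}=y$ forces $u=\widetilde{u}$. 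I do not anticipate a genuine obstacle here: the entire argument rests on the single inequality $4C\eta<1$, and the only points requiring a little care are the strict-versus-nonstrict bookkeeping in the trapping step and the justification of the passage to the limit via continuity of the bounded bilinear map.
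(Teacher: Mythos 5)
Your proof is correct, and it is essentially the standard argument: the paper itself does not prove Proposition \ref{pro2.13} but cites it from Lemari\'e-Rieusset's book, where the proof is precisely this Picard iteration trapped in the ball of radius $2\eta$, with the contraction factor $4C\eta<1$ giving both convergence and the stability estimate. All steps (the inductive trapping, the bilinear telescoping $\mathcal{B}(u,u)-\mathcal{B}(\widetilde{u},\widetilde{u})=\mathcal{B}(u,u-\widetilde{u})+\mathcal{B}(u-\widetilde{u},\widetilde{u})$, and the passage to the limit via continuity of the bounded bilinear map) check out.
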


\section{Solvability of the heat equation in variable Fourier--Besov spaces}

The basic heat equation reads
\begin{equation}\label{eq3.1}
\begin{cases}
\partial_{t}u-\Delta u=f(t,x), \ \ \ t\in \mathbb{R}^{+}, x\in \mathbb{R}^{n},\\
u(0,x)=u_{0}(x), \ \ \  x\in \mathbb{R}^{n}.
\end{cases}
\end{equation}
We get the following linear estimates of solutions to the heat equation in the framework of Fourier--Besov space $F\dot{B}^{s(\cdot)}_{p(\cdot),r}(\mathbb{R}^{n})$ with variable regularity and integrability indices.
\begin{theorem}\label{th3.1}
Let $0<T\leq \infty$, $1\leq \rho,r\leq\infty$ and $s(\cdot), p(\cdot)\in \mathcal{P}_{0}^{\log}(\mathbb{R}^{n})$. Assume that $u_{0}\in F\dot{B}^{s(\cdot)}_{p(\cdot),r}(\mathbb{R}^{n})$, $f\in\mathcal{L}^{\rho}(0,T; F\dot{B}^{s(\cdot)+\frac{2}{\rho}-2}_{p(\cdot),r}(\mathbb{R}^{n}))$. Then the heat equation \eqref{eq3.1} has a unique solution
$$
  u\in \mathcal{L}^{\infty}(0,T; F\dot{B}^{s(\cdot)}_{p(\cdot),r}(\mathbb{R}^{n}))\cap \mathcal{L}^{\rho}(0,T; F\dot{B}^{s(\cdot)+\frac{2}{\rho}}_{p(\cdot),r}(\mathbb{R}^{n})).
$$
Moreover, for any $\rho_{1}\in[\rho,\infty]$, there exists a positive constant $C$ depending only on $n$ such that
\begin{equation}\label{eq3.2}
\|u\|_{\mathcal{L}^{\rho_{1}}_{T}(F\dot{B}^{s(\cdot)+\frac{2}{\rho_{1}}}_{p(\cdot),r})}\leq C\left(\|u_{0}\|_{ F\dot{B}^{s(\cdot)}_{p(\cdot),r}}+\|f\|_{\mathcal{L}^{\rho}_{T}(F\dot{B}^{s(\cdot)+\frac{2}{\rho}-2}_{p(\cdot),r})}\right).
\end{equation}
\end{theorem}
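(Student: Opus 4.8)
The plan is to write the solution by Duhamel's formula and then estimate frequency block by block, using that under the Fourier transform the heat semigroup acts as multiplication by $e^{-t|\xi|^2}$. From
\[
u(t)=e^{t\Delta}u_{0}+\int_{0}^{t}e^{(t-\tau)\Delta}f(\tau)\,d\tau
\]
one gets, after localizing in frequency,
\[
\varphi_{j}\widehat{u}(t)=e^{-t|\xi|^{2}}\varphi_{j}\widehat{u_{0}}+\int_{0}^{t}e^{-(t-\tau)|\xi|^{2}}\varphi_{j}\widehat{f}(\tau)\,d\tau .
\]
The first key step is the single-block semigroup bound. On $\operatorname{supp}\varphi(2^{-j}\cdot)$ we have $|\xi|\geq\frac{3}{4}2^{j}$, hence $e^{-t|\xi|^{2}}\leq e^{-ct2^{2j}}$ with $c=\tfrac{9}{16}$. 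Since this is a \emph{pointwise} bound in $\xi$, and the variable Lebesgue norm is monotone with respect to pointwise domination in absolute value while the constant $e^{-ct2^{2j}}$ factors out by homogeneity, I obtain for any $g$ with $\operatorname{supp}\widehat{g}\subset 2^{j}\mathcal{C}$ that
\[
\big\|2^{j(s(\cdot)+\frac{2}{\rho_{1}})}e^{-t|\xi|^{2}}\varphi_{j}\widehat{g}\big\|_{L^{p(\cdot)}}\leq e^{-ct2^{2j}}\big\|2^{j(s(\cdot)+\frac{2}{\rho_{1}})}\varphi_{j}\widehat{g}\big\|_{L^{p(\cdot)}} .
\]
This is precisely the point where the variable structure could fail, since there is no general Fourier multiplier theorem on $L^{p(\cdot)}$; the radial, frequency-localized decay is what lets monotonicity and homogeneity do all the work.

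Next I reduce everything to scalar sequences in $j$. Setting $A_{j}(t):=\|2^{j(s(\cdot)+\frac{2}{\rho_{1}})}\varphi_{j}\widehat{u}(t)\|_{L^{p(\cdot)}}$, $B_{j}:=\|2^{js(\cdot)}\varphi_{j}\widehat{u_{0}}\|_{L^{p(\cdot)}}$ and $G_{j}(t):=\|2^{j(s(\cdot)+\frac{2}{\rho}-2)}\varphi_{j}\widehat{f}(t)\|_{L^{p(\cdot)}}$, the block bound together with the generalized Minkowski inequality for the Duhamel integral gives
\[
A_{j}(t)\leq 2^{\frac{2j}{\rho_{1}}}e^{-ct2^{2j}}B_{j}+2^{j(2+\frac{2}{\rho_{1}}-\frac{2}{\rho})}\int_{0}^{t}e^{-c(t-\tau)2^{2j}}G_{j}(\tau)\,d\tau .
\]
I then take the $L^{\rho_{1}}(0,T)$ norm in time. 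For the first term, $\|e^{-ct2^{2j}}\|_{L^{\rho_{1}}(0,\infty)}=(c\rho_{1})^{-1/\rho_{1}}2^{-2j/\rho_{1}}$ cancels the prefactor $2^{2j/\rho_{1}}$ and leaves $CB_{j}$. For the second term, which is the convolution $k_{j}\ast G_{j}$ with $k_{j}(t)=e^{-ct2^{2j}}\mathbf{1}_{\{t>0\}}$, Young's inequality with $\tfrac{1}{\sigma}=1+\tfrac{1}{\rho_{1}}-\tfrac{1}{\rho}$ (admissible since $\rho_{1}\geq\rho$ forces $\sigma\geq1$) gives $\|k_{j}\|_{L^{\sigma}}=C2^{-2j/\sigma}$, and since $\tfrac{2}{\sigma}=2+\tfrac{2}{\rho_{1}}-\tfrac{2}{\rho}$ the prefactor is cancelled exactly, leaving $C\|G_{j}\|_{L^{\rho}(0,T)}$. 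Thus $\|A_{j}\|_{L^{\rho_{1}}_{T}}\leq C(B_{j}+\|G_{j}\|_{L^{\rho}_{T}})$, and raising to the power $r$, summing over $j\in\mathbb{Z}$ and taking the $r$-th root yields exactly \eqref{eq3.2}; the two membership statements are the cases $\rho_{1}=\infty$ and $\rho_{1}=\rho$.

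Finally, existence follows by verifying that the Duhamel formula defines an element of the asserted Chemin--Lerner space, which is guaranteed by the convergence of the right-hand side in that norm (the a priori bound just established). Uniqueness is immediate from linearity: the difference of two solutions solves the homogeneous heat equation with zero data, and \eqref{eq3.2} forces it to vanish. The only genuinely delicate point is the block semigroup bound; once one observes that the frequency-localized heat multiplier is dominated on each dyadic shell by the single constant $e^{-ct2^{2j}}$, the problem collapses to scalar sequences $B_{j}$ and $G_{j}(t)$, after which only the classical Young and Minkowski inequalities in the time variable are needed, and these are completely insensitive to the variability of the indices $s(\cdot)$ and $p(\cdot)$.
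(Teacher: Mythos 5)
Your proposal is correct and follows essentially the same route as the paper: Duhamel's formula in Fourier variables, the pointwise domination $e^{-t|\xi|^{2}}\leq e^{-ct2^{2j}}$ on each dyadic shell (which reduces everything to scalar sequences since the $L^{p(\cdot)}$ norm is monotone under pointwise domination), Young's inequality in time with the exponent $\sigma$ playing the role of the paper's $\rho_{2}$, and finally $\ell^{r}$ summation. Your explicit remark on why the variable-exponent structure is harmless at the multiplier step is a point the paper leaves implicit, but it is the same argument.
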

\begin{proof}
Since $u_{0}$ and $f$ are temperate distributions, the heat equation \eqref{eq3.1} admits a unique solution $u$ in $\mathcal{S}'((0,T)\times \mathbb{R}^{n})$, which satisfies
\begin{equation}\label{eq3.3}
\widehat{u}(t,\xi)=e^{-t|\xi|^{2}}\widehat{u}_{0}(\xi)+\int_{0}^{t}e^{-(t-\tau)|\xi|^{2}}\widehat{f}(\tau,\xi)d\tau.
\end{equation}
Notice that $\phi_{j}$ is supported in the ring $\{\frac{3}{4}2^{j}\leq |\xi|\leq \frac{8}{3}2^{j}\}$, thus there exists a constant $\kappa>0$ such that
\begin{align}\label{eq3.4}
  \|2^{js(\cdot)}\phi_{j}\widehat{u}(t,\xi)\|_{L^{\rho_{1}}_{T}(L^{p(\cdot)}_{\xi})}&\leq \|e^{-\kappa t2^{2j}}2^{js(\cdot)}\phi_{j}\widehat{u}_{0}(\xi)\|_{L^{\rho_{1}}_{T}(L^{p(\cdot)}_{\xi})}\nonumber\\
  &+\left\|\int_{0}^{t}e^{-\kappa(t-\tau)2^{2j}}2^{js(\cdot)}\phi_{j}\widehat{f}(\tau,\xi)d\tau\right\|_{L^{\rho_{1}}_{T}(L^{p(\cdot)}_{\xi})}.
\end{align}
By Young's inequality with respect to the time variable, we get
\begin{align}\label{eq3.5}
  \|2^{js(\cdot)}\phi_{j}\widehat{u}(t,\xi)\|_{L^{\rho_{1}}_{T}(L^{p(\cdot)}_{\xi})}&\leq \left(\frac{1-e^{-\kappa 2^{2j}\rho_{1}T}}{\kappa2^{2j}\rho_{1}}\right)^{\frac{1}{\rho_{1}}}\|2^{js(\cdot)}\phi_{j}\widehat{u}_{0}(\xi)\|_{L^{p(\cdot)}_{\xi}}\nonumber\\
  &+ \left(\frac{1-e^{-\kappa 2^{2j}\rho_{2}T}}{\kappa2^{2j}\rho_{2}}\right)^{\frac{1}{\rho_{2}}}\left\|2^{js(\cdot)}\phi_{j}\widehat{f}(\tau,\xi)\right\|_{L^{\rho}_{T}(L^{p(\cdot)}_{\xi})},
\end{align}
where $1+\frac{1}{\rho_{1}}=\frac{1}{\rho}+\frac{1}{\rho_{2}}$. Finally, taking $\ell^{r}$-norm into consideration, we conclude that
\begin{align}\label{eq3.6}
  \|u\|_{\mathcal{L}^{\rho_{1}}_{T}(F\dot{B}^{s(\cdot)+\frac{2}{\rho_{1}}}_{p(\cdot),r})}&\leq \left[\sum_{j\in\mathbb{Z}}\left(\frac{1-e^{-\kappa 2^{2j}\rho_{1}T}}{\kappa\rho_{1}}\right)^{\frac{r}{\rho_{1}}}\left(\|2^{js(\cdot)}\phi_{j}\widehat{u}_{0}(\xi)\|_{L^{p(\cdot)}_{\xi}}\right)^{r}\right]^{\frac{1}{r}}\nonumber\\
  &+ \left[\sum_{j\in\mathbb{Z}}\left(\frac{1-e^{-\kappa2^{2j} \rho_{2}T}}{\kappa \rho_{2}}\right)^{\frac{r}{\rho_{2}}}\left(\left\|2^{j(s(\cdot)-2+\frac{2}{\rho})}\phi_{j}\widehat{f}(\tau,\xi)\right\|_{L^{\rho}_{T}(L^{p(\cdot)}_{\xi})}\right)^{r}\right]^{\frac{1}{r}}\nonumber\\
  &\leq C\left(\|u_{0}\|_{ F\dot{B}^{s(\cdot)}_{p(\cdot),r}}+\|f\|_{\mathcal{L}^{\rho}_{T}(F\dot{B}^{s(\cdot)+\frac{2}{\rho}-2}_{p(\cdot),r})}\right).
\end{align}
Thus we get $u\in \mathcal{L}^{\infty}(0,T; F\dot{B}^{s(\cdot)}_{p(\cdot),r}(\mathbb{R}^{n}))\cap \mathcal{L}^{\rho}(0,T;F\dot{B}^{s(\cdot)+\frac{2}{\rho}}_{p(\cdot),r}(\mathbb{R}^{n}))$ and the inequality \eqref{eq3.2}.
\end{proof}

\section{Proof of Theorem \ref{th1.1}}

In order to apply Proposition \ref{pro2.13} to the Navier--Stokes equations \eqref{eq1.1}, we need to get rid of the pressure $\pi$, and for this purpose we apply to \eqref{eq1.1} the Leray projector $\mathbb{P}$ defined by $\mathbb{P}u=u+\nabla(-\Delta)^{-1}\nabla\cdot u$, i.e., $\mathbb{P}$ is the $3\times 3$
matrix pseudo-differential operator with the
symbol $(\delta_{ij}-\frac{\xi_i\xi_j}{|\xi|^2})_{i,j=1}^3$, where
$\delta_{ij}=1$ if $i= j$, and $\delta_{ij}=0$ if $i\neq j$.  Recall that we have $\mathbb{P}(\nabla\pi)=0$, and if a vector field $u$ is divergence free then we have the identity $\mathbb{P}(u)=u$. Thus we rewrite the Navier--Stokes equations \eqref{eq1.1} as
 \begin{equation} \label{eq4.1}
\left\{
\begin{aligned}
& \partial_t u -\Delta u +\mathbb{P} \nabla\cdot(u\otimes u)=\mathbb{P}f,  \\
& \nabla\cdot u=0,\\
& u(0,x) =u_0(x).
\end{aligned}
\right.
\end{equation}
Based on the framework of the Kato's analytical semigroup, we can further rewrite the system \eqref{eq4.1} as an equivalent integral form:
\begin{equation}\label{eq4.2}
  u(t,x)=e^{t\Delta}u_{0}(x)+\int_{0}^{t}e^{(t-\tau)\Delta}\mathbb{P}fd\tau-\int_{0}^{t}e^{(t-\tau)\Delta}\mathbb{P}\nabla\cdot(u\otimes u)d\tau.
\end{equation}
For simplicity, we denote the third term on the right-hand side of \eqref{eq4.2}  as
\begin{equation*}
  \mathcal{B}_{1}(u,u):=\int_{0}^{t}e^{(t-\tau)\Delta}\mathbb{P}\nabla\cdot(u\otimes u)d\tau.
\end{equation*}

Now,  let the assumptions of Theorem \ref{th1.1} be in force, we introduce the solution space $\mathcal{X}_{t}$ as
$$
\mathcal{X}_{t}:=\mathcal{L}^{\rho}(0,\infty; F\dot{B}^{2-\frac{3}{p(\cdot)}+\frac{2}{\rho}}_{p(\cdot),1}(\mathbb{R}^{3}))\cap \mathcal{L}^{1}(0,\infty; F\dot{B}^{\frac{5}{2}}_{2,1}(\mathbb{R}^{3}))\cap \mathcal{L}^{\infty}(0,\infty; F\dot{B}^{\frac{1}{2}}_{2,1}(\mathbb{R}^{3}))
$$
and the norm of the  space $\mathcal{X}_{t}$ is endowed by
\begin{equation*}
   \|u\|_{\mathcal{X}_{t}}:=\max\left\{\|u\|_{\mathcal{L}^{\rho}_{t}( F\dot{B}^{2-\frac{3}{p(\cdot)}+\frac{2}{\rho}}_{p(\cdot),1})}, \|u\|_{\mathcal{L}^{1}_{t}( F\dot{B}^{\frac{5}{2}}_{2,1})}, \|u\|_{\mathcal{L}^{\infty}_{t}(F\dot{B}^{\frac{1}{2}}_{2,1})} \right\}.
\end{equation*}
It can be easily seen that $(\mathcal{X}_{t},  \|\cdot\|_{\mathcal{X}_{t}})$ is a Banach space.
\smallbreak

In the sequel, we shall establish the linear and nonlinear estimates of the integral system \eqref{eq4.2} in the space $\mathcal{X}_{t}$, respectively.
\begin{lemma}\label{le4.1}
Let the assumptions of Theorem \ref{th1.1} be in force. Then for any $u_{0}\in F\dot{B}^{2-\frac{3}{p(\cdot)}}_{p(\cdot),1}(\mathbb{R}^{3})$,
there exists a positive constant $C$ such that
\begin{equation}\label{eq4.3}
   \left\|e^{t\Delta} u_{0}\right\|_{\mathcal{X}_{t}}\leq
   C\left\|u_{0}\right\|_{F\dot{B}^{2-\frac{3}{p(\cdot)}}_{p(\cdot),1}}.
\end{equation}
\end{lemma}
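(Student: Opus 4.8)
The goal is to bound the free heat evolution $e^{t\Delta}u_0$ in each of the three norms comprising $\|\cdot\|_{\mathcal{X}_t}$, and this is exactly what Theorem~\ref{th3.1} is built to deliver, applied with $f\equiv 0$. The strategy is to invoke Theorem~\ref{th3.1} three times with appropriately chosen exponents, each time taking the external force to vanish so that only the $\|u_0\|$ term survives on the right-hand side. The one subtlety is that the three target norms are built over two \emph{different} Fourier--Besov scales: the $\mathcal{L}^\rho$ piece lives in the variable-exponent space $F\dot{B}^{s(\cdot)}_{p(\cdot),1}$ with $s(\cdot)=2-\tfrac{3}{p(\cdot)}$, whereas the $\mathcal{L}^1$ and $\mathcal{L}^\infty$ pieces live in the \emph{constant}-exponent spaces $F\dot{B}^{5/2}_{2,1}$ and $F\dot{B}^{1/2}_{2,1}$. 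So I must feed Theorem~\ref{th3.1} two different base regularities.

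For the variable-exponent piece I set $s(\cdot)=2-\tfrac{3}{p(\cdot)}$, $p(\cdot)$ as given, $r=1$, and read off from \eqref{eq3.2} with $\rho_1=\rho$ (and $f=0$) the bound
\begin{equation*}
\|e^{t\Delta}u_0\|_{\mathcal{L}^{\rho}_t(F\dot{B}^{2-\frac{3}{p(\cdot)}+\frac{2}{\rho}}_{p(\cdot),1})}\leq C\|u_0\|_{F\dot{B}^{2-\frac{3}{p(\cdot)}}_{p(\cdot),1}}.
\end{equation*}
For the two constant-exponent pieces I instead apply Theorem~\ref{th3.1} with constant $p\equiv 2$ and constant regularity $s=\tfrac12$, which is legitimate since constant exponents are a special case of $\mathcal{P}^{\log}_0$. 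Choosing $\rho_1=\infty$ gives directly $\|e^{t\Delta}u_0\|_{\mathcal{L}^\infty_t(F\dot{B}^{1/2}_{2,1})}\lesssim\|u_0\|_{F\dot{B}^{1/2}_{2,1}}$, while choosing $\rho_1=1$ gives $\|e^{t\Delta}u_0\|_{\mathcal{L}^1_t(F\dot{B}^{1/2+2}_{2,1})}=\|e^{t\Delta}u_0\|_{\mathcal{L}^1_t(F\dot{B}^{5/2}_{2,1})}\lesssim\|u_0\|_{F\dot{B}^{1/2}_{2,1}}$, since $\tfrac12+\tfrac{2}{1}=\tfrac52$.

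The remaining task, and the only genuine content beyond citing Theorem~\ref{th3.1}, is to control the constant-exponent datum norm $\|u_0\|_{F\dot{B}^{1/2}_{2,1}}$ by the variable-exponent datum norm $\|u_0\|_{F\dot{B}^{2-3/p(\cdot)}_{p(\cdot),1}}$. This is an embedding of the form $F\dot{B}^{2-3/p(\cdot)}_{p(\cdot),1}\hookrightarrow F\dot{B}^{1/2}_{2,1}$. At each dyadic frequency $2^j$, Bernstein's inequality (Lemma~\ref{le2.7}(i)) converts the variable $L^{p(\cdot)}$ norm of $\widehat{\Delta_j u_0}$, supported in a shell of size $2^j$, into the $L^2$ norm at the cost of $2^{3j(\frac{1}{2}-\frac{1}{p(\cdot)})}$; combined with the regularity shift this matches the exponents, since $\big(2-\tfrac{3}{p(\cdot)}\big)+3\big(\tfrac{1}{p(\cdot)}-\tfrac12\big)=\tfrac12$. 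Because the constraint $2\le p^-\le p(\cdot)\le p^+\le 6$ makes $p(\cdot)\ge 2$ uniformly, the frequency-localized Bernstein step and the summation of the resulting $\ell^1$ sequence in $j$ go through with constants depending only on $p^\pm$; this is the point where the hypotheses on $p(\cdot)$ are actually used. Assembling the three bounds and taking the maximum yields \eqref{eq4.3}.

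I expect the main obstacle to be the careful justification of the frequency-localized comparison between the \emph{variable} $L^{p(\cdot)}$ norm and the \emph{constant} $L^2$ norm of a single dyadic block, since Bernstein-type estimates in variable Lebesgue spaces require the log-H\"older hypothesis on $p(\cdot)$ (entering through Lemma~\ref{le2.4}) rather than mere translation of the classical argument; everything else is a bookkeeping application of the linear heat estimate with $f=0$.
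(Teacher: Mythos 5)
Your overall strategy is sound and is essentially the paper's: Step 1 (the variable-exponent norm) is handled identically, by citing Theorem \ref{th3.1} with $f\equiv 0$, and your exponent arithmetic $\bigl(2-\tfrac{3}{p(\cdot)}\bigr)+3\bigl(\tfrac{1}{p(\cdot)}-\tfrac12\bigr)=\tfrac12$ for passing to the constant-exponent scale is exactly the cancellation the paper exploits. The only organizational difference is that you factor Step 2 as ``embedding of the data $F\dot{B}^{2-3/p(\cdot)}_{p(\cdot),1}\hookrightarrow F\dot{B}^{1/2}_{2,1}$, then the constant-exponent heat estimate,'' whereas the paper estimates $\|2^{j(1/2+2/\rho)}\varphi_j e^{-t|\cdot|^2}\widehat{u}_0\|_{L^\rho_t(L^2_\xi)}$ directly and absorbs the heat-kernel decay into the second H\"older factor. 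These are interchangeable.

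However, the one genuinely nontrivial step of the lemma --- the frequency-localized comparison $2^{j/2}\|\varphi_j\widehat{u}_0\|_{L^2}\lesssim\|2^{j(2-3/p(\cdot))}\varphi_j\widehat{u}_0\|_{L^{p(\cdot)}}$ uniformly in $j$ --- is the step you explicitly defer (``I expect the main obstacle to be\dots''), and you point at the wrong tool for it: Lemma \ref{le2.4} (maximal function/Riesz transforms) plays no role here, and Lemma \ref{le2.7} is stated only for constant exponents, so ``Bernstein with cost $2^{3j(1/2-1/p(\cdot))}$'' is not literally meaningful because that cost is a function of $\xi$ and cannot be pulled out of the norm. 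The paper's actual mechanism is elementary: apply the variable H\"older inequality (Lemma \ref{le2.2}) with the pointwise conjugate splitting $\tfrac12=\tfrac{1}{p(\xi)}+\tfrac{p(\xi)-2}{2p(\xi)}$, keeping the weight $2^{j(2-3/p(\cdot))}$ with the $L^{p(\cdot)}$ factor and putting $2^{j(-3/2+3/p(\cdot))}\varphi_{j+k}$ (times $e^{-t|\cdot|^2}$) into $L^{\frac{2p(\cdot)}{p(\cdot)-2}}_\xi$; then observe that $\bigl(-\tfrac32+\tfrac{3}{p(\xi)}\bigr)\cdot\tfrac{2p(\xi)}{p(\xi)-2}=-3$ identically, so the modular of that factor becomes $\int|\varphi_j(\xi)/\lambda|^{2p(\xi)/(p(\xi)-2)}2^{-3j}\,d\xi$, and the substitution $\xi=2^j\eta$ bounds it by a constant independent of $j$ (this is \eqref{eq4.7}). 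No log-H\"older continuity or maximal-function boundedness is needed for this particular bound --- only the modular definition of the norm and $2\le p^-\le p^+\le 6$. So your proof is not wrong, but as written it asserts rather than proves the only substantive inequality; supplying the H\"older-plus-scaling computation above closes it.
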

\begin{proof}
We split the proof into the following two steps.

\textbf{Step 1}.  Applying Theorem \ref{th3.1}, one can easily see that
\begin{equation}\label{eq4.4}
\|e^{t\Delta} u_{0}\|_{\mathcal{L}^{\rho}_{t}(F\dot{B}^{2-\frac{3}{p(\cdot)}+\frac{2}{\rho}}_{p(\cdot),1})}\lesssim \|u_{0}\|_{ F\dot{B}^{2-\frac{3}{p(\cdot)}}_{p(\cdot),1}}.
\end{equation}

\textbf{Step 2}.  For any $1\leq \rho \leq \infty$, we can derive that
\begin{align}\label{eq4.5}
   \left\|e^{t\Delta} u_{0}\right\|_{\mathcal{L}^{\rho}_{t}(F\dot{B}^{\frac{1}{2}+\frac{2}{\rho}}_{2,1})}
   &= \sum_{j\in \mathbb{Z}}\left\|2^{j(\frac{1}{2}+\frac{2}{\rho})}\varphi_{j}e^{-t|\cdot|^{2}}\widehat{u}_{0}\right\|_{L^{\rho}_{t}(L^{2}_{\xi})}\nonumber\\
   &  \lesssim\sum_{j\in \mathbb{Z}} \sum_{k=-1}^{1}\left\|2^{j(2-\frac{3}{p(\cdot)})}\varphi_{j}\widehat{u}_{0}\right\|_{L^{p(\cdot)}_{\xi}}
   \left\|2^{j(\frac{2}{\rho}-\frac{3}{2}+\frac{3}{p(\cdot)})}\varphi_{j+k}e^{-t|\cdot|^{2}}\right\|_{L^{\rho}_{t}(L^{\frac{2p(\cdot)}{p(\cdot)-2}}_{\xi})}.
\end{align}
Notice that
\begin{align}\label{eq4.6}
   \left\|2^{j(\frac{2}{\rho}-\frac{3}{2}+\frac{3}{p(\cdot)})}\varphi_{j+k}e^{-t|\cdot|^{2}}\right\|_{L^{\rho}_{t}(L^{\frac{2p(\cdot)}{p(\cdot)-2}}_{\xi})}
   &\lesssim  \left\|2^{j\frac{2}{\rho}}e^{-t2^{2j}}\right\|_{L^{\rho}_{t}} \left\|2^{j(-\frac{3}{2}+\frac{3}{p(\cdot)})}\varphi_{j}\right\|_{L^{\frac{2p(\cdot)}{p(\cdot)-2}}_{\xi}}\nonumber\\
   &\lesssim\left\|2^{j(-\frac{3}{2}+\frac{3}{p(\cdot)})}\varphi_{j}\right\|_{L^{\frac{2p(\cdot)}{p(\cdot)-2}}_{\xi}},
\end{align}
and
\begin{align}\label{eq4.7}
   \left\|2^{j(-\frac{3}{2}+\frac{3}{p(\cdot)})}\varphi_{j}\right\|_{L^{\frac{2p(\cdot)}{p(\cdot)-2}}_{\xi}}&= \left\{\lambda>0:\ \int_{\mathbb{R}^{3}}\left|\frac{2^{j(-\frac{3}{2}+\frac{3}{p(\xi)})}\varphi_{j}(\xi)}{\lambda}\right|^{\frac{2p(\xi)}{p(\xi)-2}}d\xi<1\right\}\nonumber\\
   &=\left\{\lambda>0:\ \int_{\mathbb{R}^{3}}\left|\frac{\varphi_{j}(\xi)}{\lambda}\right|^{\frac{2p(\xi)}{p(\xi)-2}}2^{-3j}d\xi<1\right\}\nonumber\\
   &\lesssim \left\{\lambda>0:\ \int_{\mathbb{R}^{3}}\left|\frac{\varphi_{j}(2^{j}\xi)}{\lambda}\right|^{\frac{2p(2^{j}\xi)}{p(2^{j}\xi)-2}}d\xi<1\right\}\nonumber\\
   &\lesssim 1.
\end{align}
Taking the above two estimates \eqref{eq4.6} and \eqref{eq4.7} into
\eqref{eq4.5}, we obtain
\begin{align*}
   \left\|e^{t\Delta} u_{0}\right\|_{\mathcal{L}^{\rho}_{t}(\dot{B}^{\frac{1}{2}+\frac{2}{\rho}}_{2,1})}
      \lesssim\sum_{j\in \mathbb{Z}}\left\|2^{j(2-\frac{3}{p(\cdot)})}\varphi_{j}\widehat{u}_{0}\right\|_{L^{p(\cdot)}_{\xi}}
   \lesssim\|u_{0}\|_{F\dot{B}^{2-\frac{3}{p(\cdot)}}_{p(\cdot),1}}.
\end{align*}
Taking $\rho=1$ and $\rho=\infty$ in above inequality, respectively, we get
\begin{align}\label{eq4.8}
   \left\|e^{t\Delta} u_{0}\right\|_{\mathcal{L}^{1}_{t}(\dot{B}^{\frac{5}{2}}_{2,1})}+ \left\|e^{t\Delta} u_{0}\right\|_{\mathcal{L}^{\infty}_{t}(\dot{B}^{\frac{1}{2}}_{2,1})}
         \lesssim\|u_{0}\|_{F\dot{B}^{2-\frac{3}{p(\cdot)}}_{p(\cdot),1}}.
\end{align}
We have completed the proof of Lemma \ref{le4.1}.
\end{proof}
\smallbreak

\begin{lemma}\label{le4.2}
Let the assumptions of Theorem \ref{th1.1} be in force. Then for any $f\in\mathcal{L}^{1}(0,\infty; F\dot{B}^{2-\frac{3}{p(\cdot)}}_{p(\cdot),1}(\mathbb{R}^{3}))\cap \mathcal{L}^{1}(0,\infty; F\dot{B}^{\frac{1}{2}}_{2,1}(\mathbb{R}^{3}))$, there exists a positive constant $C$ such that
\begin{equation}\label{eq4.9}
   \left\|\int_{0}^{t}e^{(t-\tau)\Delta}\mathbb{P}fd\tau\right\|_{\mathcal{X}_{t}}\leq
   C\left(\|f\|_{\mathcal{L}^{1}_{t}(F\dot{B}^{2-\frac{3}{p(\cdot)}}_{p(\cdot),1})}+\|f\|_{\mathcal{L}^{1}_{t}(F\dot{B}^{\frac{1}{2}}_{2,1})}\right).
\end{equation}
\end{lemma}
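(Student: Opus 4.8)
The plan is to recognize that the Duhamel term $\int_{0}^{t}e^{(t-\tau)\Delta}\mathbb{P}f\,d\tau$ is exactly the solution of the heat equation \eqref{eq3.1} with zero initial data and forcing $\mathbb{P}f$: since $\mathbb{P}$ and $e^{(t-\tau)\Delta}$ are both Fourier multipliers they commute, so this term coincides with the representation \eqref{eq3.3} for $u_0=0$ and right-hand side $\mathbb{P}f$. The whole estimate then reduces to three applications of the linear heat estimate of Theorem \ref{th3.1}, one for each of the three norms comprising $\|\cdot\|_{\mathcal{X}_t}$, combined with the boundedness of the Leray projector.

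First I would dispose of $\mathbb{P}$. Its symbol $P(\xi)=\big(\delta_{ij}-\frac{\xi_i\xi_j}{|\xi|^2}\big)_{i,j}$ is, for each $\xi\neq0$, an orthogonal projection matrix, whence the pointwise bound $|\widehat{\mathbb{P}f}(\xi)|=|P(\xi)\widehat{f}(\xi)|\leq|\widehat{f}(\xi)|$. By the monotonicity of the variable Lebesgue norm this gives $\|\varphi_j\widehat{\mathbb{P}f}\|_{L^{p(\cdot)}}\leq\|\varphi_j\widehat{f}\|_{L^{p(\cdot)}}$ for every $j$ (alternatively one expands $\frac{\xi_i\xi_j}{|\xi|^2}$ as compositions of Riesz transforms and invokes \eqref{eq2.4} of Lemma \ref{le2.4}). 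Consequently $\mathbb{P}$ is bounded on both $F\dot{B}^{2-\frac{3}{p(\cdot)}}_{p(\cdot),1}$ and $F\dot{B}^{\frac12}_{2,1}$, uniformly in time, so that $\|\mathbb{P}f\|_{\mathcal{L}^1_t(F\dot{B}^{2-\frac{3}{p(\cdot)}}_{p(\cdot),1})}\lesssim\|f\|_{\mathcal{L}^1_t(F\dot{B}^{2-\frac{3}{p(\cdot)}}_{p(\cdot),1})}$ and similarly for the $F\dot{B}^{\frac12}_{2,1}$ norm.

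Next I would apply Theorem \ref{th3.1} with $u_0=0$ three times. For the variable-exponent part I take $s(\cdot)=2-\frac{3}{p(\cdot)}$, $r=1$, forcing exponent $\rho=1$ and output exponent $\rho_1=\rho$; since $s(\cdot)+\frac{2}{\rho}-2=s(\cdot)$ with this choice of forcing exponent, the hypothesis is exactly $\mathbb{P}f\in\mathcal{L}^1_t(F\dot{B}^{2-\frac{3}{p(\cdot)}}_{p(\cdot),1})$ and the conclusion controls $\|\cdot\|_{\mathcal{L}^{\rho}_t(F\dot{B}^{2-\frac{3}{p(\cdot)}+\frac{2}{\rho}}_{p(\cdot),1})}$. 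For the two constant-exponent parts I take $p(\cdot)\equiv2$, $s=\frac12$, $r=1$, forcing exponent $\rho=1$: choosing output exponent $\rho_1=1$ yields the $\mathcal{L}^1_t(F\dot{B}^{\frac52}_{2,1})$ norm (as $\frac12+\frac{2}{1}=\frac52$), while $\rho_1=\infty$ yields the $\mathcal{L}^\infty_t(F\dot{B}^{\frac12}_{2,1})$ norm, and in both cases the forcing is measured in $\mathcal{L}^1_t(F\dot{B}^{\frac12}_{2,1})$ since $\frac12+\frac{2}{1}-2=\frac12$. Combining the three bounds with the projector estimates and taking the maximum over the three norms produces \eqref{eq4.9}.

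The argument is routine once the reduction to Theorem \ref{th3.1} is carried out; the only genuine point is the boundedness of $\mathbb{P}$ on the variable Fourier--Besov space, which is where the variable integrability index enters and for which I rely on the pointwise projection bound together with the monotonicity of $\|\cdot\|_{L^{p(\cdot)}}$ (or equivalently on Lemma \ref{le2.4}). One should also check that each triple $(\rho,\rho_1,s)$ used above satisfies the admissibility constraint $\rho_1\in[\rho,\infty]$ required by Theorem \ref{th3.1}, which indeed holds in all three cases.
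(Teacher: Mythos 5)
Your proposal is correct and follows essentially the same route as the paper, whose proof is the one-line observation that the Leray projector is bounded on the Fourier side so that \eqref{eq4.9} follows directly from Theorem \ref{th3.1}. You merely make explicit the details the paper leaves implicit: the pointwise bound $|P(\xi)\widehat{f}(\xi)|\leq|\widehat{f}(\xi)|$ combined with monotonicity of $\|\cdot\|_{L^{p(\cdot)}}$, and the three admissible choices of $(\rho,\rho_{1},s)$ in Theorem \ref{th3.1} needed to cover the three norms in $\mathcal{X}_{t}$.
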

\begin{proof}
Since the Leray projector $\mathbb{P}$ is bounded under the Fourier variables, we can exactly  obtain \eqref{eq4.9} by Theorem \ref{th3.1}.
\end{proof}

\begin{lemma}\label{le4.3}
Let the assumptions of Theorem \ref{th1.1} be in force.  Then
there exists a positive constant $C$ such that
\begin{equation}\label{eq4.10}
   \left\|\mathcal{B}_{1}(u,u)\right\|_{\mathcal{X}_{t}}\leq C
    \|u\|_{\mathcal{X}_{t}}^{2}.
\end{equation}
\end{lemma}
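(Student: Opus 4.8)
The plan is to estimate the bilinear term $\mathcal{B}_1(u,u)=\int_0^t e^{(t-\tau)\Delta}\mathbb{P}\nabla\cdot(u\otimes u)\,d\tau$ by combining the linear heat estimate of Theorem \ref{th3.1} with the product estimates of Lemmas \ref{le2.9} and \ref{le2.10}. The key observation is that $\mathbb{P}\nabla\cdot(u\otimes u)$ is, up to the Fourier-bounded multiplier $\mathbb{P}$ and one derivative, a quadratic term in $u$; since $\mathbb{P}$ acts boundedly on the Fourier side and a derivative costs exactly one unit of regularity (by Lemma \ref{le2.8}(i)), it suffices to control $u\otimes u$ in a Fourier--Besov space one derivative higher than the target. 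Because $\|u\|_{\mathcal{X}_t}$ controls the three norms defining $\mathcal{X}_t$, I would treat each of the three output norms separately, in each case applying Theorem \ref{th3.1} with the appropriate choice of $(\rho,\rho_1)$ to reduce to a product estimate for the forcing term.

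First I would handle the two constant-exponent pieces. For the $\mathcal{L}^1_t(F\dot{B}^{5/2}_{2,1})$ and $\mathcal{L}^\infty_t(F\dot{B}^{1/2}_{2,1})$ norms, Theorem \ref{th3.1} (with $s=1/2$, $p=2$, $r=1$, and $\rho=1$) reduces the bilinear term to bounding $\nabla\cdot(u\otimes u)$ in $\mathcal{L}^1_t(F\dot{B}^{-1/2}_{2,1})$, hence $u\otimes u$ in $\mathcal{L}^1_t(F\dot{B}^{1/2}_{2,1})$. Here I would apply Lemma \ref{le2.9} (or Lemma \ref{le2.10} with $p_1=p_2=2$, $s_1=s_2$ chosen to land at regularity $1/2$) together with a time-integration splitting of the product: one factor is measured in the high-regularity smoothing norm $\mathcal{L}^1_t(F\dot{B}^{5/2}_{2,1})$ and the other in $\mathcal{L}^\infty_t(F\dot{B}^{1/2}_{2,1})$, so that the Chemin--Lerner structure turns the $t$-integral of a product into a product of the two $\mathcal{X}_t$-components. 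This yields the quadratic bound $\|u\|_{\mathcal{X}_t}^2$ for these two pieces.

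Next I would handle the variable-exponent piece $\|\mathcal{B}_1(u,u)\|_{\mathcal{L}^\rho_t(F\dot{B}^{2-3/p(\cdot)+2/\rho}_{p(\cdot),1})}$. Again Theorem \ref{th3.1} reduces matters to estimating $\nabla\cdot(u\otimes u)$ in $\mathcal{L}^1_t(F\dot{B}^{-3/p(\cdot)}_{p(\cdot),1})$, i.e. $u\otimes u$ in $\mathcal{L}^1_t(F\dot{B}^{1-3/p(\cdot)}_{p(\cdot),1})$. The natural tool is Lemma \ref{le2.10} with $p_1=p(\cdot)$, $p_2=2$ (or vice versa), $n=3$: one chooses $s_1,s_2$ so that $s_1+s_2-3(1-\tfrac12)=1-\tfrac{3}{p(\cdot)}$, i.e. $s_1+s_2=\tfrac52-\tfrac3{p(\cdot)}$, with the natural assignment $s_1=2-\tfrac3{p(\cdot)}$ for the variable-exponent factor and $s_2=\tfrac12$ for the $L^2$ factor. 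The constraint $2\le p^-\le p(\cdot)\le p^+\le 6$ is exactly what makes the hypotheses of Lemma \ref{le2.10} hold (for instance $s_1=2-\tfrac3{p(\cdot)}\le 3\min\{1-\tfrac1{p(\cdot)},\tfrac12\}$ needs $p(\cdot)\le 6$, and $s_1+s_2=\tfrac52-\tfrac3{p(\cdot)}>3(1-\tfrac1{p(\cdot)}-\tfrac12)$ needs $p(\cdot)\ge 2$). As before I would distribute the time integration between one factor in the smoothing space and one factor in the $L^\infty_t$-in-time endpoint, so that a product estimate in space combined with Hölder in time produces $\|u\|_{\mathcal{X}_t}^2$.

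The main obstacle will be bookkeeping the mixed variable/constant exponent product estimates so that every index constraint of Lemma \ref{le2.10} is verified uniformly for all admissible $p(\cdot)$, and ensuring that the variable regularity index $s(\cdot)=2-\tfrac{3}{p(\cdot)}$ is compatible with the log-Hölder framework in which Theorem \ref{th3.1} is stated. A subtler technical point is the correct handling of the Chemin--Lerner time norm: since $u\otimes u$ mixes two time-integrabilities, I must invoke Hölder in time with exponents matching $1 = \tfrac1\rho+\tfrac1{\rho'}$ and verify that the resulting estimate closes across all three components of the $\mathcal{X}_t$ norm simultaneously, rather than separately. Once the product estimates are set up with the right index choices, the smoothing gain from Theorem \ref{th3.1} absorbs the single derivative from $\nabla\cdot$, and the quadratic bound \eqref{eq4.10} follows.
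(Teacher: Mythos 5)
Your treatment of the constant-exponent components is essentially the paper's, but the regularity bookkeeping is off: in Theorem \ref{th3.1} with $\rho=1$ the forcing is measured in $F\dot{B}^{s(\cdot)+\frac{2}{\rho}-2}_{p(\cdot),r}=F\dot{B}^{s(\cdot)}_{p(\cdot),r}$, not $F\dot{B}^{s(\cdot)-2}_{p(\cdot),r}$. So to land in $\mathcal{L}^{\infty}_{t}(F\dot{B}^{1/2}_{2,1})\cap\mathcal{L}^{1}_{t}(F\dot{B}^{5/2}_{2,1})$ you must put $\nabla\cdot(u\otimes u)$ in $\mathcal{L}^{1}_{t}(F\dot{B}^{1/2}_{2,1})$, i.e.\ $u\otimes u$ in $\mathcal{L}^{1}_{t}(F\dot{B}^{3/2}_{2,1})$ --- not $F\dot{B}^{1/2}_{2,1}$ as you wrote. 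The ingredients you name (Lemma \ref{le2.10} with $p_1=p_2=2$, one factor in $\mathcal{L}^{1}_{t}(F\dot{B}^{5/2}_{2,1})$, the other in $\mathcal{L}^{\infty}_{t}(F\dot{B}^{1/2}_{2,1})$) do produce exactly $F\dot{B}^{3/2}_{2,1}$ since $\frac52+\frac12-\frac32=\frac32$, so this part is repairable by fixing the arithmetic; it then coincides with the paper's Step~2.

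The genuine gap is in the variable-exponent component. First, the same arithmetic slip: the correct target is $u\otimes u\in\mathcal{L}^{1}_{t}(F\dot{B}^{3-\frac{3}{p(\cdot)}}_{p(\cdot),1})$, not $F\dot{B}^{1-\frac{3}{p(\cdot)}}_{p(\cdot),1}$. Second, and more seriously, you propose to apply Lemma \ref{le2.10} with $p_1=p(\cdot)$, but that lemma is stated and proved only for constant integrability indices; no variable-exponent analogue of it is available in the paper, and proving one is a nontrivial extra step (Young-type convolution inequalities fail in $L^{p(\cdot)}$). Moreover, even granting such a lemma, your scheme needs a factor of $u$ measured in a variable Fourier--Besov norm at a time exponent different from the fixed $\rho$ of the theorem, and $\|u\|_{\mathcal{X}_t}$ only controls the variable norm $\mathcal{L}^{\rho}_{t}(F\dot{B}^{2-\frac{3}{p(\cdot)}+\frac{2}{\rho}}_{p(\cdot),1})$ for that single $\rho$, so the estimate does not close. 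The paper avoids both problems: on each dyadic block it applies H\"older in the frequency variable with exponents $\frac{6p(\cdot)}{6-p(\cdot)}$ and $6$ (this is where $2\leq p(\cdot)\leq 6$ enters), showing $\|2^{-3j(\frac{6-p(\cdot)}{6p(\cdot)})}\varphi_j\|_{L^{6p(\cdot)/(6-p(\cdot))}_{\xi}}\lesssim 1$ uniformly in $j$, thereby reducing $\|u\otimes u\|_{\mathcal{L}^1_t(F\dot{B}^{3-\frac{3}{p(\cdot)}}_{p(\cdot),1})}$ to the \emph{constant}-exponent norm $\|u\otimes u\|_{\mathcal{L}^1_t(F\dot{B}^{5/2}_{6,1})}$, which is then handled by Lemma \ref{le2.9} with $p=6$, $p_1=2$, $p_2=\frac32$ and the embedding $F\dot{B}^{1/2}_{2,1}\hookrightarrow F\dot{B}^{0}_{3/2,1}$, yielding a bound purely in terms of the two constant-exponent components of $\|u\|_{\mathcal{X}_t}$. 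This frequency-side H\"older reduction is the key idea your proposal is missing.
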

\begin{proof}
We split the proof into the following two steps.

\textbf{Step 1}.  According to Theorem \ref{th3.1}, it is clear that
\begin{align}\label{eq4.11}
   \left\|\mathcal{B}_{1}(u,u)\right\|_{\mathcal{L}^{\rho}_{t}(F\dot{B}^{2-\frac{3}{p(\cdot)}+\frac{2}{\rho}}_{p(\cdot),1})}&\lesssim \left\|\nabla\cdot(u\otimes u)\right\|_{\mathcal{L}^{1}_{t}(F\dot{B}^{2-\frac{3}{p(\cdot)}}_{p(\cdot),1})}\nonumber\\&\lesssim \left\|u\otimes u\right\|_{\mathcal{L}^{1}_{t}(F\dot{B}^{3-\frac{3}{p(\cdot)}}_{p(\cdot),1})}.
\end{align}
Furthermore, we can estimate the right-hand side of \eqref{eq4.11} as
\begin{align}\label{eq4.12}
   \left\|u\otimes u\right\|_{\mathcal{L}^{1}_{t}(F\dot{B}^{3-\frac{3}{p(\cdot)}}_{p(\cdot),1})}
   &=\sum_{j\in\mathbb{Z}}\left\|2^{j(3-\frac{3}{p(\cdot)})}\varphi_{j}\mathcal{F}\left(u\otimes u\right)\right\|_{L^{1}_{t}(L^{p(\cdot)}_{\xi})}\nonumber\\
   &\lesssim\sum_{j\in\mathbb{Z}}\sum_{k=-1}^{1}\left\|2^{-3j(\frac{6-p(\cdot)}{6p(\cdot)})}\varphi_{j}\right\|_{L^{\frac{6p(\cdot)}{6-p(\cdot)}}_{\xi}}
   \left\|2^{\frac{5}{2}j}\varphi_{j+k}\mathcal{F}\left(u\otimes u\right)\right\|_{L^{1}_{t}L^{6}_{\xi}}\nonumber\\
  \nonumber\\
   &\lesssim \left\|2^{\frac{5}{2}j}\varphi_{j+k}\mathcal{F}\left(u\otimes u\right)\right\|_{L^{1}_{t}L^{6}_{\xi}}\nonumber\\
   &\approx  \left\|u\otimes u\right\|_{\mathcal{L}^{1}_{t}(F\dot{B}^{\frac{5}{2}}_{6,1})}.
\end{align}
By using Lemma \ref{le2.9} with $p=6$, $p_{1}=2$ and $p_{2}=\frac{3}{2}$, we obtain
\begin{align}\label{eq4.13}
 \left\|u\otimes u\right\|_{\mathcal{L}^{1}_{t}(F\dot{B}^{\frac{5}{2}}_{6,1})}
 &\lesssim  \left\|u\right\|_{\mathcal{L}^{1}_{t}(F\dot{B}^{\frac{5}{2}}_{2,1})}\left\|u\right\|_{\mathcal{L}^{\infty}_{t}(F\dot{B}^{0}_{\frac{3}{2},1})}\nonumber\\
 &\lesssim  \left\|u\right\|_{\mathcal{L}^{1}_{t}(F\dot{B}^{\frac{5}{2}}_{2,1})}\left\|u\right\|_{\mathcal{L}^{\infty}_{t}(F\dot{B}^{\frac{1}{2}}_{2,1})},
\end{align}
where we have used the Sobolev embedding $F\dot{B}^{\frac{1}{2}}_{2,1}(\mathbb{R}^{3})\hookrightarrow F\dot{B}^{0}_{\frac{3}{2},1}(\mathbb{R}^{3})$ in Lemma \ref{le2.8}. Taking the above estimates \eqref{eq4.12} and \eqref{eq4.13} into \eqref{eq4.11}, we obtain
\begin{align}\label{eq4.14}
   \left\|\mathcal{B}_{1}(u,u)\right\|_{\mathcal{L}^{\rho}_{t}(F\dot{B}^{2-\frac{3}{p(\cdot)}+\frac{2}{\rho}}_{p(\cdot),1})}&\lesssim \left\|u\right\|_{\mathcal{L}^{1}_{t}(F\dot{B}^{\frac{5}{2}}_{2,1})}\left\|u\right\|_{\mathcal{L}^{\infty}_{t}(F\dot{B}^{\frac{1}{2}}_{2,1})}.
\end{align}

\textbf{Step 2}.  Applying Theorem \ref{th3.1} and Lemma \ref{le2.10} with $s_{1}=\frac{5}{2}$,  $s_{2}=\frac{1}{2}$ and $p_{1}=p_{2}=2$ (the indices of the time variable obey the law of H\"{o}lder's inequality), we see that
\begin{align}\label{eq4.15}
 \left\|\mathcal{B}_{1}(u,u)\right\|_{\mathcal{L}^{1}_{t}(F\dot{B}^{\frac{5}{2}}_{2,1})}+ \left\|\mathcal{B}(u,u)\right\|_{\mathcal{L}^{\infty}_{t}(F\dot{B}^{\frac{1}{2}}_{2,1})}&\leq \left\|u\otimes u\right\|_{\mathcal{L}^{1}_{t}(F\dot{B}^{\frac{3}{2}}_{2,1})}\nonumber\\
   &\leq C \left\|u\right\|_{\mathcal{L}^{1}_{t}(F\dot{B}^{\frac{5}{2}}_{2,1})}\left\|u\right\|_{\mathcal{L}^{\infty}_{t}(F\dot{B}^{\frac{1}{2}}_{2,1})}.
\end{align}
Putting the above estimates \eqref{eq4.14} and \eqref{eq4.15} together, we get \eqref{eq4.10}.  We have completed the proof of Lemma \ref{le4.3}.
\end{proof}
\smallbreak

Based on the desired linear and nonlinear estimates obtained in Lemmas \ref{le4.1}--\ref{le4.3}, we know that there exist two positive
constants $C_{1}$ and $C_{2}$ such that
\begin{align*}
  \|u\|_{\mathcal{X}_{t}}\leq
 C_{1}\left(\|u_{0}\|_{F\dot{B}^{2-\frac{3}{p(\cdot)}}_{p(\cdot),1}}+\|f\|_{\mathcal{L}^{1}_{t}(F\dot{B}^{2-\frac{3}{p(\cdot)}}_{p(\cdot),1})}
 +\|f\|_{\mathcal{L}^{1}_{t}(F\dot{B}^{\frac{1}{2}}_{2,1})}\right)+C_{2}\|u\|_{\mathcal{X}_{t}}^2.
\end{align*}
Thus if the initial data $u_{0}$  and $f$ satisfy the condition
$$
\|u_{0}\|_{F\dot{B}^{2-\frac{3}{p(\cdot)}}_{p(\cdot),1}}+\|f\|_{\mathcal{L}^{1}_{t}(F\dot{B}^{2-\frac{3}{p(\cdot)}}_{p(\cdot),1})}
+\|f\|_{\mathcal{L}^{1}_{t}(F\dot{B}^{\frac{1}{2}}_{2,1})}\leq \frac{1}{4C_{1}C_{2}},
$$
we can apply Proposition \ref{pro2.13} to get global existence of solution $u\in \mathcal{X}_{t}$ to the equations \eqref{eq1.1}. We have completed the proof of Theorem \ref{th1.1}.

\section{Proof of Theorem \ref{th1.2}}

Notice that the chemical concentration $v$ is determined by the Poisson equation, i.e., the second equation of \eqref{eq1.2}, gives rise to the coefficient $\nabla v$ in the first equation of \eqref{eq1.2},  thus $v$ can be represented as the volume potential of $u$:
\begin{equation*}
v(t,x)=(-\Delta)^{-1}u(t,x)=
\frac{1}{3\omega_{3}}\int_{\mathbb{R}^{3}}\frac{u(t,y)}{|x-y|}dy,
\end{equation*}
where $\omega_{3}$ denotes the surface area of the unit sphere in
$\mathbb{R}^{3}$. Based on the framework of the Kato's analytical semigroup, we can rewrite system \eqref{eq1.2} as an equivalent integral form:
\begin{equation}\label{eq5.1}
  u(t,x)=e^{t\Delta}u_{0}(x)+\int_{0}^{t}e^{(t-\tau)\Delta}fd\tau+\int_{0}^{t}e^{(t-\tau)\Delta}\nabla\cdot(u\nabla(-\Delta)^{-1}u)d\tau.
\end{equation}
For simplicity, we denote the third term on the right-hand side of \eqref{eq5.1}  as
\begin{equation*}
  \mathcal{B}_{2}(u,u):=\int_{0}^{t}e^{(t-\tau)\Delta}\nabla\cdot(u\nabla(-\Delta)^{-1}u)d\tau.
\end{equation*}

Now,  let the assumptions of Theorem \ref{th1.2} be in force, we introduce the solution space $\mathcal{Y}_{t}$ as follows:
$$
\mathcal{Y}_{t}:=\mathcal{L}^{\rho}(0,\infty; F\dot{B}^{1-\frac{3}{p(\cdot)}+\frac{2}{\rho}}_{p(\cdot),1}(\mathbb{R}^{3}))\cap \mathcal{L}^{1}(0,\infty; F\dot{B}^{\frac{3}{2}}_{2,1}(\mathbb{R}^{3}))\cap \mathcal{L}^{\infty}(0,\infty; F\dot{B}^{-\frac{1}{2}}_{2,1}(\mathbb{R}^{3}))
$$
and the norm of the  space $\mathcal{Y}_{t}$ is endowed by
\begin{equation*}
   \|u\|_{\mathcal{Y}_{t}}=\max\left\{\|u\|_{\mathcal{L}^{\rho}_{t}( F\dot{B}^{1-\frac{3}{p(\cdot)}+\frac{2}{\rho}}_{p(\cdot),1})}, \|u\|_{\mathcal{L}^{1}_{t}( F\dot{B}^{\frac{3}{2}}_{2,1})}, \|u\|_{\mathcal{L}^{\infty}_{t}(F\dot{B}^{-\frac{1}{2}}_{2,1})} \right\}.
\end{equation*}
It can be easily seen that $(\mathcal{Y}_{t},  \|\cdot\|_{\mathcal{Y}_{t}})$ is a Banach space.
\smallbreak

In the sequel, we shall establish the linear and nonlinear estimates of the integral equation \eqref{eq5.1} in the space $\mathcal{Y}_{t}$, respectively.
\begin{lemma}\label{le5.1}
Let the assumptions of Theorem \ref{th1.2} be in force. Then for any $u_{0}\in F\dot{B}^{1-\frac{3}{p(\cdot)}}_{p(\cdot),1}(\mathbb{R}^{3})$,
there exists a positive constant $C$ such that
\begin{equation}\label{eq5.2}
   \|e^{t\Delta} u_{0}\|_{\mathcal{Y}_{t}}\leq
   C\|u_{0}\|_{F\dot{B}^{1-\frac{3}{p(\cdot)}}_{p(\cdot),1}}.
\end{equation}
\end{lemma}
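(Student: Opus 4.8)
The plan is to reproduce, with every regularity index lowered by one unit, the two-step argument already used in the proof of Lemma~\ref{le4.1}. The essential observation is that the gap between the variable-exponent regularity $1-\frac{3}{p(\cdot)}$ and the constant $L^{2}$-regularity $-\frac{1}{2}$ equals $\frac{3}{p(\cdot)}-\frac{3}{2}$, which is \emph{exactly} the gap $\frac{1}{2}-\bigl(2-\frac{3}{p(\cdot)}\bigr)$ that appeared in the Navier--Stokes case. Consequently the intermediate frequency-in-time estimates \eqref{eq4.6} and \eqref{eq4.7} transfer verbatim, and no new computation is required.

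First I would handle the variable-exponent component of the $\mathcal{Y}_{t}$-norm by a direct appeal to Theorem~\ref{th3.1}. Taking $f\equiv 0$, $s(\cdot)=1-\frac{3}{p(\cdot)}$ and $r=1$ there yields immediately
\[
\|e^{t\Delta}u_{0}\|_{\mathcal{L}^{\rho}_{t}(F\dot{B}^{1-\frac{3}{p(\cdot)}+\frac{2}{\rho}}_{p(\cdot),1})}\lesssim \|u_{0}\|_{F\dot{B}^{1-\frac{3}{p(\cdot)}}_{p(\cdot),1}},
\]
which controls the first piece of $\|e^{t\Delta}u_{0}\|_{\mathcal{Y}_{t}}$.

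Next I would control the two constant-exponent pieces at once through the auxiliary quantity $\|e^{t\Delta}u_{0}\|_{\mathcal{L}^{\rho}_{t}(F\dot{B}^{-\frac{1}{2}+\frac{2}{\rho}}_{2,1})}$ for general $1\le\rho\le\infty$. After expanding the Fourier--Besov norm and using the support of $\varphi_{j}$, I would split the $L^{2}_{\xi}$ norm by H\"older's inequality along $\frac{1}{2}=\frac{1}{p(\cdot)}+\frac{p(\cdot)-2}{2p(\cdot)}$, factoring out the variable-norm density $\|2^{j(1-\frac{3}{p(\cdot)})}\varphi_{j}\widehat{u}_{0}\|_{L^{p(\cdot)}_{\xi}}$. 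The remaining factor is precisely $\|2^{j(\frac{2}{\rho}-\frac{3}{2}+\frac{3}{p(\cdot)})}\varphi_{j+k}e^{-t|\cdot|^{2}}\|_{L^{\rho}_{t}(L^{\frac{2p(\cdot)}{p(\cdot)-2}}_{\xi})}$, which \eqref{eq4.6}--\eqref{eq4.7} already bound by a constant independent of $j$. Summing the resulting $\ell^{1}$-series over $j$ gives $\|e^{t\Delta}u_{0}\|_{\mathcal{L}^{\rho}_{t}(F\dot{B}^{-\frac{1}{2}+\frac{2}{\rho}}_{2,1})}\lesssim\|u_{0}\|_{F\dot{B}^{1-\frac{3}{p(\cdot)}}_{p(\cdot),1}}$; specializing to $\rho=1$ and $\rho=\infty$ then controls the $\mathcal{L}^{1}_{t}(F\dot{B}^{3/2}_{2,1})$ and $\mathcal{L}^{\infty}_{t}(F\dot{B}^{-1/2}_{2,1})$ pieces respectively, completing the estimate.

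I do not expect a genuine obstacle: the lemma is the Keller--Segel counterpart of Lemma~\ref{le4.1}, and the matching of the regularity gaps makes the argument essentially identical. The only point requiring a word of care is the legitimacy of the H\"older splitting and of the scaling step \eqref{eq4.7}, which rely respectively on $p(\cdot)\ge 2$ (so that the conjugate exponent $\frac{2p(\cdot)}{p(\cdot)-2}$ is meaningful) and on $p(\cdot)\le 6$; both are guaranteed by the standing hypothesis $2\le p^{-}\le p(\cdot)\le p^{+}\le 6$ of Theorem~\ref{th1.2}.
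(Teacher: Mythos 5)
Your proposal matches the paper's proof of Lemma \ref{le5.1} essentially line for line: Step 1 invokes Theorem \ref{th3.1} for the variable-exponent component, and Step 2 performs the H\"older splitting $\frac{1}{2}=\frac{1}{p(\cdot)}+\frac{p(\cdot)-2}{2p(\cdot)}$ and reuses \eqref{eq4.6}--\eqref{eq4.7}, which apply verbatim because the regularity gap $\frac{3}{p(\cdot)}-\frac{3}{2}$ is the same as in Lemma \ref{le4.1}, before specializing to $\rho=1$ and $\rho=\infty$. The argument is correct and takes the same route as the paper.
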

\begin{proof}
We split the proof into the following two steps.

\textbf{Step 1}.  Applying Theorem \ref{th3.1}, one can easily see that
\begin{equation}\label{eq5.3}
\|e^{t\Delta} u_{0}\|_{\mathcal{L}^{\rho}_{t}(F\dot{B}^{1-\frac{3}{p(\cdot)}+\frac{2}{\rho}}_{p(\cdot),1})}\leq C\|u_{0}\|_{ F\dot{B}^{1-\frac{3}{p(\cdot)}}_{p(\cdot),1}}.
\end{equation}

\textbf{Step 2}. For any $1\leq \rho \leq \infty$, we can use the facts \eqref{eq4.6} and \eqref{eq4.7} to derive that
\begin{align}\label{eq5.4}
   \left\|e^{t\Delta} u_{0}\right\|_{\mathcal{L}^{\rho}_{t}(F\dot{B}^{-\frac{1}{2}+\frac{2}{\rho}}_{2,1})}
   &=\sum_{j\in \mathbb{Z}}\left\|2^{j(-\frac{1}{2}+\frac{2}{\rho})}\varphi_{j}e^{-t|\cdot|^{2}}\widehat{u}_{0}\right\|_{L^{\rho}_{t}(L^{2}_{\xi})}\nonumber\\
   &\lesssim\sum_{j\in \mathbb{Z}} \sum_{k=-1}^{1}\left\|2^{j(1-\frac{3}{p(\cdot)})}\varphi_{j}\widehat{u}_{0}\right\|_{L^{p(\cdot)}_{\xi}}
   \left\|2^{j(\frac{2}{\rho}-\frac{3}{2}+\frac{3}{p(\cdot)})}\varphi_{j+k}e^{-t|\cdot|^{2}}\right\|_{L^{\rho}_{t}(L^{\frac{2p(\cdot)}{p(\cdot)-2}}_{\xi})}\nonumber\\
    &\lesssim\sum_{j\in \mathbb{Z}}\left\|2^{j(1-\frac{3}{p(\cdot)})}\varphi_{j}\widehat{u}_{0}\right\|_{L^{p(\cdot)}_{\xi}}\nonumber\\
   &\lesssim\|u_{0}\|_{F\dot{B}^{1-\frac{3}{p(\cdot)}}_{p(\cdot),1}}.
\end{align}
Taking $\rho=1$ and $\rho=\infty$ in above inequality \eqref{eq5.4}, respectively, we get
\begin{align}\label{eq5.5}
   \left\|e^{t\Delta} u_{0}\right\|_{\mathcal{L}^{1}_{t}(\dot{B}^{\frac{3}{2}}_{2,1})}+ \left\|e^{t\Delta} u_{0}\right\|_{\mathcal{L}^{\infty}_{t}(\dot{B}^{-\frac{1}{2}}_{2,1})}
         \lesssim\|u_{0}\|_{F\dot{B}^{1-\frac{3}{p(\cdot)}}_{p(\cdot),1}}.
\end{align}
We have completed the proof of Lemma \ref{le5.1}.
\end{proof}

\begin{lemma}\label{le5.2}
Let the assumptions of Theorem \ref{th1.2} be in force. Then for any $f\in\mathcal{L}^{1}(0,\infty; F\dot{B}^{1-\frac{3}{p(\cdot)}}_{p(\cdot),1}(\mathbb{R}^{3}))\cap \mathcal{L}^{1}(0,\infty; F\dot{B}^{-\frac{1}{2}}_{2,1}(\mathbb{R}^{3}))$, there exists a positive constant $C$ such that
\begin{equation}\label{eq5.6}
   \left\|\int_{0}^{t}e^{(t-\tau)\Delta}fd\tau\right\|_{\mathcal{Y}_{t}}\leq
   C\left(\|f\|_{\mathcal{L}^{1}_{t}(F\dot{B}^{1-\frac{3}{p(\cdot)}}_{p(\cdot),1})}+\|f\|_{\mathcal{L}^{1}_{t}(F\dot{B}^{-\frac{1}{2}}_{2,1})}\right).
\end{equation}
\end{lemma}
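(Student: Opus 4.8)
The plan is to recognize that the term $\int_0^t e^{(t-\tau)\Delta} f\, d\tau$ is precisely the Duhamel (inhomogeneous) part of the solution to the heat equation \eqref{eq3.1} with zero initial data, so that Theorem \ref{th3.1} applies directly with $u_0 = 0$. The only subtlety is that the target norm $\|\cdot\|_{\mathcal{Y}_t}$ is a maximum of three separate Chemin--Lerner norms living in different variable or constant Fourier--Besov scales, so I would apply the linear estimate \eqref{eq3.2} once for each component and then take the maximum.

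First I would handle the variable-exponent component $\mathcal{L}^{\rho}_t(F\dot{B}^{1-\frac{3}{p(\cdot)}+\frac{2}{\rho}}_{p(\cdot),1})$. Here I apply Theorem \ref{th3.1} with $s(\cdot) = 1 - \tfrac{3}{p(\cdot)}$, $r = 1$, and the pair $(\rho,\rho_1) = (1,\rho)$: the hypothesis requires $f \in \mathcal{L}^{1}_t(F\dot{B}^{s(\cdot)+\frac{2}{1}-2}_{p(\cdot),1}) = \mathcal{L}^{1}_t(F\dot{B}^{1-\frac{3}{p(\cdot)}}_{p(\cdot),1})$, which is exactly the first space in the assumption on $f$. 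Estimate \eqref{eq3.2} then gives
\begin{equation*}
\left\|\int_0^t e^{(t-\tau)\Delta} f\, d\tau\right\|_{\mathcal{L}^{\rho}_t(F\dot{B}^{1-\frac{3}{p(\cdot)}+\frac{2}{\rho}}_{p(\cdot),1})} \leq C\,\|f\|_{\mathcal{L}^{1}_t(F\dot{B}^{1-\frac{3}{p(\cdot)}}_{p(\cdot),1})}.
\end{equation*}

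Next I would treat the two constant-exponent components by applying Theorem \ref{th3.1} a second time, now with $p(\cdot) \equiv 2$, $s(\cdot) \equiv -\tfrac{1}{2}$, $r = 1$, and $\rho = 1$ in the source. For the choice $\rho_1 = 1$ one lands in $\mathcal{L}^{1}_t(F\dot{B}^{\frac{3}{2}}_{2,1})$ and for $\rho_1 = \infty$ one lands in $\mathcal{L}^{\infty}_t(F\dot{B}^{-\frac{1}{2}}_{2,1})$, both controlled by $\|f\|_{\mathcal{L}^{1}_t(F\dot{B}^{-\frac{1}{2}}_{2,1})}$, which is the second space in the hypothesis on $f$. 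Combining the three bounds and taking the maximum yields \eqref{eq5.6}. The only point requiring minor care is checking that the shift $s+\tfrac{2}{\rho}-2$ with $\rho=1$ reproduces exactly the regularity indices appearing in the assumption on $f$ in each of the two scales; once that bookkeeping is confirmed, the estimate is an immediate consequence of the linear theory, with no genuine obstacle. This mirrors the proof of Lemma \ref{le4.2}, where the Leray projector was additionally absorbed via its boundedness on the Fourier side; here no projector is present, so the argument is even more direct.
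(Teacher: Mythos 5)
Your proposal is correct and follows exactly the paper's route: the paper's own proof of Lemma \ref{le5.2} consists of the single remark that the estimate ``can be exactly obtained by Theorem \ref{th3.1},'' and your application of \eqref{eq3.2} with $(s(\cdot),\rho)=(1-\tfrac{3}{p(\cdot)},1)$, $\rho_1=\rho$ for the variable-exponent component and with $(s,p,\rho)=(-\tfrac12,2,1)$, $\rho_1\in\{1,\infty\}$ for the constant-exponent components is precisely the bookkeeping the authors leave implicit. Nothing further is needed.
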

\begin{proof}
This can be exactly obtained by Theorem \ref{th3.1}.
\end{proof}

\begin{lemma}\label{le5.3}
Let the assumptions of Theorem \ref{th1.2} be in force.  Then
there exists a positive constant $C$ such that
\begin{equation}\label{eq5.7}
   \left\|\mathcal{B}_{2}(u,u)\right\|_{\mathcal{Y}_{t}}\leq C
    \|u\|_{\mathcal{Y}_{t}}^{2}.
\end{equation}
\end{lemma}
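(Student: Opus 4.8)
The plan is to follow the two-step scheme of Lemma~\ref{le4.3}, splitting the $\mathcal{Y}_t$-norm of $\mathcal{B}_2(u,u)$ into the variable-exponent component $\mathcal{L}^\rho_t(F\dot{B}^{1-\frac{3}{p(\cdot)}+\frac{2}{\rho}}_{p(\cdot),1})$ and the two constant-exponent components $\mathcal{L}^1_t(F\dot{B}^{3/2}_{2,1})$ and $\mathcal{L}^\infty_t(F\dot{B}^{-1/2}_{2,1})$. The one structural novelty compared with the Navier--Stokes nonlinearity $u\otimes u$ is that the Keller--Segel nonlinearity $u\,\nabla(-\Delta)^{-1}u$ is \emph{not symmetric}: the factor $\nabla(-\Delta)^{-1}u$ carries the Fourier multiplier $i\xi/|\xi|^2$, homogeneous of degree $-1$, so on each dyadic shell it gains one derivative. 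Concretely $\|\nabla(-\Delta)^{-1}g\|_{F\dot{B}^{s}_{q,1}}\approx\|g\|_{F\dot{B}^{s-1}_{q,1}}$, and I will use this identification repeatedly to rewrite every norm of $\nabla(-\Delta)^{-1}u$ as a norm of $u$ one regularity index lower.

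For Step~1, I first invoke Theorem~\ref{th3.1} (with forcing exponent $1$ and output exponent $\rho$) together with Lemma~\ref{le2.8}(i) to reduce
\begin{equation*}
\|\mathcal{B}_2(u,u)\|_{\mathcal{L}^\rho_t(F\dot{B}^{1-\frac{3}{p(\cdot)}+\frac{2}{\rho}}_{p(\cdot),1})}\lesssim \|u\,\nabla(-\Delta)^{-1}u\|_{\mathcal{L}^1_t(F\dot{B}^{2-\frac{3}{p(\cdot)}}_{p(\cdot),1})}.
\end{equation*}
Then I pass from the variable exponent to the constant auxiliary exponent exactly as in \eqref{eq4.12}: writing $2^{j(2-\frac{3}{p(\cdot)})}=2^{\frac{3}{2}j}\,2^{j(\frac{1}{2}-\frac{3}{p(\cdot)})}$ and applying Hölder (Lemma~\ref{le2.2}) with $\frac{1}{p(\cdot)}=\frac{1}{6}+\frac{6-p(\cdot)}{6p(\cdot)}$, the variable weight $2^{j(\frac{1}{2}-\frac{3}{p(\cdot)})}\varphi_j$ is bounded in $L^{\frac{6p(\cdot)}{6-p(\cdot)}}_\xi$ by \eqref{eq4.7}, which leaves $\|u\,\nabla(-\Delta)^{-1}u\|_{\mathcal{L}^1_t(F\dot{B}^{3/2}_{6,1})}$. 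On this constant-exponent space I apply Lemma~\ref{le2.9} with $p=6$, $p_1=2$, $p_2=\frac32$ and $s=\frac32$, producing the two Bony terms $\|u\|_{F\dot{B}^{3/2}_{2,1}}\|\nabla(-\Delta)^{-1}u\|_{F\dot{B}^{0}_{3/2,1}}$ and $\|u\|_{F\dot{B}^{0}_{3/2,1}}\|\nabla(-\Delta)^{-1}u\|_{F\dot{B}^{3/2}_{2,1}}$.

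The hard part is the second (asymmetric) Bony term. Using the degree $-1$ gain and the embeddings of Lemma~\ref{le2.8}(ii), namely $F\dot{B}^{-1/2}_{2,1}\hookrightarrow F\dot{B}^{-1}_{3/2,1}$ and $F\dot{B}^{1/2}_{2,1}\hookrightarrow F\dot{B}^{0}_{3/2,1}$, the first term is controlled by $\|u\|_{F\dot{B}^{3/2}_{2,1}}\|u\|_{F\dot{B}^{-1/2}_{2,1}}$, which integrates in time as $\mathcal{L}^1_t\cdot\mathcal{L}^\infty_t$ and is $\lesssim\|u\|_{\mathcal{Y}_t}^2$. The second term, however, reduces to $\|u\|_{F\dot{B}^{1/2}_{2,1}}^2$, and the borderline space $F\dot{B}^{1/2}_{2,1}$ appears in neither factor of $\|\cdot\|_{\mathcal{Y}_t}$. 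I resolve this by choosing the time split $\frac11=\frac12+\frac12$ and interpolating each factor in the Chemin--Lerner scale: from Lemma~\ref{le2.8}(iii) applied pointwise in $\tau$ together with Hölder in time and Cauchy--Schwarz in $j$ one obtains
\begin{equation*}
\|u\|_{\mathcal{L}^2_t(F\dot{B}^{1/2}_{2,1})}\lesssim \|u\|^{1/2}_{\mathcal{L}^1_t(F\dot{B}^{3/2}_{2,1})}\,\|u\|^{1/2}_{\mathcal{L}^\infty_t(F\dot{B}^{-1/2}_{2,1})}\le \|u\|_{\mathcal{Y}_t},
\end{equation*}
so the second term is $\lesssim\|u\|_{\mathcal{Y}_t}^2$ as well. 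I expect this time-interpolation of the borderline $F\dot{B}^{1/2}_{2,1}$ norm to be the only genuinely new ingredient relative to Lemma~\ref{le4.3}.

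For Step~2 the two constant-exponent components are cleaner, since I can use the one-sided estimate Lemma~\ref{le2.10}. Theorem~\ref{th3.1} (taking output exponents $\rho_1=1$ and $\rho_1=\infty$ with forcing exponent $1$) together with Lemma~\ref{le2.8}(i) reduces both $\|\mathcal{B}_2(u,u)\|_{\mathcal{L}^1_t(F\dot{B}^{3/2}_{2,1})}$ and $\|\mathcal{B}_2(u,u)\|_{\mathcal{L}^\infty_t(F\dot{B}^{-1/2}_{2,1})}$ to $\|u\,\nabla(-\Delta)^{-1}u\|_{\mathcal{L}^1_t(F\dot{B}^{1/2}_{2,1})}$. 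I then apply Lemma~\ref{le2.10} with $n=3$, $p_1=p_2=2$, $s_1=\frac32$, $s_2=\frac12$ (whose hypotheses $s_1,s_2\le\frac32$ and $s_1+s_2=2>0$ all hold), which yields $\|u\|_{F\dot{B}^{3/2}_{2,1}}\|\nabla(-\Delta)^{-1}u\|_{F\dot{B}^{1/2}_{2,1}}\approx\|u\|_{F\dot{B}^{3/2}_{2,1}}\|u\|_{F\dot{B}^{-1/2}_{2,1}}$; splitting the time integral as $\mathcal{L}^1_t\cdot\mathcal{L}^\infty_t$ bounds this by $\|u\|_{\mathcal{Y}_t}^2$. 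Combining the two steps gives \eqref{eq5.7}.
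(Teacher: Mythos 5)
Your proof is correct, but your Step 1 takes a genuinely different route from the paper's at the decisive point. After reducing to $\|u\nabla(-\Delta)^{-1}u\|_{\mathcal{L}^{1}_{t}(F\dot{B}^{3/2}_{6,1})}$ exactly as in \eqref{eq5.9}, the paper does \emph{not} apply Lemma \ref{le2.9} to the asymmetric product directly; it first rewrites the nonlinearity in the symmetric divergence form
\begin{equation*}
u\nabla(-\Delta)^{-1}u=-\nabla\cdot\Bigl(\nabla(-\Delta)^{-1}u\otimes\nabla(-\Delta)^{-1}u-\tfrac{1}{2}\bigl|\nabla(-\Delta)^{-1}u\bigr|^{2}I\Bigr),
\end{equation*}
so that Lemma \ref{le2.9} is applied at regularity $s=\tfrac{5}{2}$ to a product of two copies of $\nabla(-\Delta)^{-1}u$. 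The two Bony contributions are then identical, land in $\mathcal{L}^{1}_{t}(F\dot{B}^{5/2}_{2,1})\times\mathcal{L}^{\infty}_{t}(F\dot{B}^{0}_{3/2,1})$, and reduce to $\|u\|_{\mathcal{L}^{1}_{t}(F\dot{B}^{3/2}_{2,1})}\|u\|_{\mathcal{L}^{\infty}_{t}(F\dot{B}^{-1/2}_{2,1})}$ after undoing the degree $-1$ multiplier; the borderline space $F\dot{B}^{1/2}_{2,1}$ never appears. You instead keep the asymmetric product, which forces you to confront the term $\|u\|^{2}_{F\dot{B}^{1/2}_{2,1}}$ and to supply the Chemin--Lerner interpolation $\|u\|_{\mathcal{L}^{2}_{t}(F\dot{B}^{1/2}_{2,1})}\lesssim\|u\|^{1/2}_{\mathcal{L}^{1}_{t}(F\dot{B}^{3/2}_{2,1})}\|u\|^{1/2}_{\mathcal{L}^{\infty}_{t}(F\dot{B}^{-1/2}_{2,1})}$. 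That inequality is valid (apply H\"older in time block by block, $\|\widehat{\Delta_{j}u}\|_{L^{2}_{t}L^{2}}\leq\|\widehat{\Delta_{j}u}\|^{1/2}_{L^{1}_{t}L^{2}}\|\widehat{\Delta_{j}u}\|^{1/2}_{L^{\infty}_{t}L^{2}}$, then Cauchy--Schwarz in $j$; this blockwise argument is cleaner than invoking Lemma \ref{le2.8}(iii) pointwise in $\tau$), so your estimate closes. The trade-off: your version is more robust, since it does not exploit the special algebraic structure of the Keller--Segel nonlinearity and would survive perturbations of the chemoattractant equation, while the paper's symmetrization eliminates the borderline term at no cost. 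Your Step 2 coincides with the paper's, including the choice of indices in Lemma \ref{le2.10}.
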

\begin{proof}We split the proof into the following two steps.

\textbf{Step 1}.  According to Theorem \ref{th3.1} and Lemma \ref{le2.8}, it is clear that
\begin{align}\label{eq5.8}
   \left\|\mathcal{B}_{2}(u,u)\right\|_{\mathcal{L}^{\rho}_{t}(F\dot{B}^{1-\frac{3}{p(\cdot)}+\frac{2}{\rho}}_{p(\cdot),1})}&\lesssim \left\|\nabla\cdot(u\nabla(-\Delta)^{-1}u)\right\|_{\mathcal{L}^{1}_{t}(F\dot{B}^{1-\frac{3}{p(\cdot)}}_{p(\cdot),1})}\nonumber\\&\approx \left\|u\nabla(-\Delta)^{-1}u\right\|_{\mathcal{L}^{1}_{t}(F\dot{B}^{2-\frac{3}{p(\cdot)}}_{p(\cdot),1})}.
\end{align}
Furthermore, we can estimate the right-hand side of \eqref{eq5.8} as
\begin{align}\label{eq5.9}
   \left\|u\nabla(-\Delta)^{-1}u\right\|_{\mathcal{L}^{1}_{t}(F\dot{B}^{2-\frac{3}{p(\cdot)}}_{p(\cdot),1})}
   &=\sum_{j\in\mathbb{Z}}\left\|2^{j(2-\frac{3}{p(\cdot)})}\varphi_{j}\mathcal{F}\left(u\nabla(-\Delta)^{-1}u\right)\right\|_{L^{1}_{t}(L^{p(\cdot)}_{\xi})}\nonumber\\
   &\lesssim\sum_{j\in\mathbb{Z}}\sum_{k=-1}^{1}\left\|2^{-3j(\frac{6-p(\cdot)}{6p(\cdot)})}\varphi_{j}\right\|_{L^{\frac{6p(\cdot)}{6-p(\cdot)}}_{\xi}}
   \left\|2^{\frac{3}{2}j}\varphi_{j+k}\mathcal{F}\left(u\nabla(-\Delta)^{-1}u\right)\right\|_{L^{1}_{t}L^{6}_{\xi}}\nonumber\\
  \nonumber\\
   &\lesssim \left\|2^{\frac{3}{2}j}\varphi_{j+k}\mathcal{F}\left(u\nabla(-\Delta)^{-1}u\right)\right\|_{L^{1}_{t}L^{6}_{\xi}}\nonumber\\
   &\approx  \left\|u\nabla(-\Delta)^{-1}u\right\|_{\mathcal{L}^{1}_{t}(F\dot{B}^{\frac{3}{2}}_{6,1})}.
\end{align}
Notice carefully that the nonlinear term $u\nabla(-\Delta)^{-1}u$ has a nice symmetric structure:
\begin{equation*}
    u\nabla(-\Delta)^{-1}u=-\nabla\cdot\left(\nabla(-\Delta)^{-1}u\otimes\nabla(-\Delta)^{-1}u-\frac{1}{2}\left|\nabla(-\Delta)^{-1}u\right|^{2}I\right),
\end{equation*}
where $\otimes$ is a tensor product, and $I$ is a $3$-th order identity matrix, thus by using Lemma \ref{le2.9} with $p=6$, $p_{1}=2$ and $p_{2}=\frac{3}{2}$, we obtain
\begin{align}\label{eq5.10}
 \left\|u\nabla(-\Delta)^{-1}u\right\|_{\mathcal{L}^{1}_{t}(F\dot{B}^{\frac{3}{2}}_{6,1})}
 &\approx\left\|\nabla(-\Delta)^{-1}u\otimes\nabla(-\Delta)^{-1}u-\frac{1}{2}\left|\nabla(-\Delta)^{-1}u\right|^{2}I\right\|_{\mathcal{L}^{1}_{t}(F\dot{B}^{\frac{5}{2}}_{6,1})}\nonumber\\
 &\lesssim  \left\|\nabla(-\Delta)^{-1}u\right\|_{\mathcal{L}^{1}_{t}(F\dot{B}^{\frac{5}{2}}_{2,1})}\left\|\nabla(-\Delta)^{-1}u\right\|_{\mathcal{L}^{\infty}_{t}(F\dot{B}^{0}_{\frac{3}{2},1})}\nonumber\\
 &\lesssim  \left\|\nabla(-\Delta)^{-1}u\right\|_{\mathcal{L}^{1}_{t}(F\dot{B}^{\frac{5}{2}}_{2,1})}\left\|\nabla(-\Delta)^{-1}u\right\|_{\mathcal{L}^{\infty}_{t}(F\dot{B}^{\frac{1}{2}}_{2,1})}\nonumber\\
  &\lesssim  \left\|u\right\|_{\mathcal{L}^{1}_{t}(F\dot{B}^{\frac{3}{2}}_{2,1})}\left\|u\right\|_{\mathcal{L}^{\infty}_{t}(F\dot{B}^{-\frac{1}{2}}_{2,1})},
\end{align}
where we have used the Sobolev embedding $F\dot{B}^{\frac{1}{2}}_{2,1}(\mathbb{R}^{3})\hookrightarrow F\dot{B}^{0}_{\frac{3}{2},1}(\mathbb{R}^{3})$ in Lemma \ref{le2.8}. Taking the above estimates \eqref{eq5.9} and \eqref{eq5.10} into \eqref{eq5.8}, we obtain
\begin{align}\label{eq5.11}
   \left\|\mathcal{B}_{2}(u,u)\right\|_{\mathcal{L}^{\rho}_{t}(F\dot{B}^{2-\frac{3}{p(\cdot)}+\frac{2}{\rho}}_{p(\cdot),1})}&\lesssim \left\|u\right\|_{\mathcal{L}^{1}_{t}(F\dot{B}^{\frac{3}{2}}_{2,1})}\left\|u\right\|_{\mathcal{L}^{\infty}_{t}(F\dot{B}^{\frac{1}{2}}_{2,1})}.
\end{align}

\textbf{Step 2}.  Applying Theorem \ref{th3.1} and Lemma \ref{le2.10} with $s_{1}=\frac{3}{2}$,  $s_{2}=\frac{1}{2}$ and $p_{1}=p_{2}=2$ (the indices of the time variable obey the law of H\"{o}lder's inequality), we obtain
\begin{align}\label{eq5.12}
 \left\|\mathcal{B}_{2}(u,u)\right\|_{\mathcal{L}^{1}_{t}(F\dot{B}^{\frac{3}{2}}_{2,1})}+\left\|\mathcal{B}_{2}(u,u)\right\|_{\mathcal{L}^{\infty}_{t}(F\dot{B}^{-\frac{1}{2}}_{2,1})}&\leq \left\|\nabla\cdot(u\nabla(-\Delta)^{-1}u)\right\|_{\mathcal{L}^{1}_{t}(F\dot{B}^{-\frac{1}{2}}_{2,1})}\nonumber\\
 &\leq \left\|u\nabla(-\Delta)^{-1}u\right\|_{\mathcal{L}^{1}_{t}(F\dot{B}^{\frac{1}{2}}_{2,1})}\nonumber\\
    &\leq C \left\|u\right\|_{\mathcal{L}^{1}_{t}(F\dot{B}^{\frac{3}{2}}_{2,1})}\left\|\nabla(-\Delta)^{-1}u\right\|_{\mathcal{L}^{\infty}_{t}(F\dot{B}^{\frac{1}{2}}_{2,1})}
    \nonumber\\
    &\leq C \left\|u\right\|_{\mathcal{L}^{1}_{t}(F\dot{B}^{\frac{3}{2}}_{2,1})}\left\|u\right\|_{\mathcal{L}^{\infty}_{t}(F\dot{B}^{-\frac{1}{2}}_{2,1})}.
\end{align}
Putting the above estimates \eqref{eq5.11} and \eqref{eq5.12} together, we get \eqref{eq5.7}.  We have completed the proof of Lemma \ref{le5.3}.
\end{proof}
\smallbreak

Based on the desired linear and nonlinear estimates obtained in Lemmas \ref{le5.1}--\ref{le5.3}, we know that there exist two positive
constants $C_{3}$ and $C_{4}$ such that
\begin{align*}
  \|u\|_{\mathcal{X}_{t}}\leq
 C_{3}\left(\|u_{0}\|_{F\dot{B}^{1-\frac{3}{p(\cdot)}}_{p(\cdot),1}}+\|f\|_{\mathcal{L}^{1}_{t}(F\dot{B}^{1-\frac{3}{p(\cdot)}}_{p(\cdot),1})}
 +\|f\|_{\mathcal{L}^{1}_{t}(F\dot{B}^{-\frac{1}{2}}_{2,1})}\right)+C_{4}\|u\|_{\mathcal{X}_{t}}^2.
\end{align*}
Thus if the initial data $u_{0}$ and $f$ satisfy the condition
$$
\|u_{0}\|_{F\dot{B}^{1-\frac{3}{p(\cdot)}}_{p(\cdot),1}}+\|f\|_{\mathcal{L}^{1}_{t}(F\dot{B}^{1-\frac{3}{p(\cdot)}}_{p(\cdot),1})}
+\|f\|_{\mathcal{L}^{1}_{t}(F\dot{B}^{-\frac{1}{2}}_{2,1})}\leq \frac{1}{4C_{3}C_{4}},
$$
we can apply Proposition \ref{pro2.13} to get global existence of solution $u\in \mathcal{Y}_{t}$ to the system \eqref{eq1.2}. We have completed the proof of Theorem \ref{th1.2}.
\bigskip

\noindent \textbf{Acknowledgements.}
The authors declared that they have no conflict of interest. G. Vergara-Hermosilla thanks to Diego Chamorro for their helpful comments and advises.
J. Zhao was partially supported by the National Natural Science Foundation of China (no. 12361034) and
 the Natural Science Foundation of Shaanxi Province (no. 2022JM-034).



\begin{thebibliography}{100}
\setlength{\itemsep}{-2mm}
\bibitem{AH10}  A. Almeida, P. H\"{a}st\"{o}, Besov spaces with variable smoothness and integrability, J. Funct. Anal. 258 (2010) 1628--1655.

\bibitem{BCD11} H. Bahouri, J.-Y. Chemin, R. Danchin, \textit{Fourier Analysis and Nonlinear Partial Differential
Equations}, Grundlehren der mathematischen Wissenschaften, vol. 343.
Springer, Berlin, 2011.

\bibitem{B81} J.M. Bony, Calcul symbolique et propagation des singularit\'{e}s pour les \'{e}quations aux d\'{e}riv\'{e}es partielles non
lin\'{e}aires, Ann. Sci. Ecole Norm. Sup. 14(4) (1981) 209--246.

\bibitem{BP08} J. Bourgain, N. Pavlovi\'{c},  Ill-posedness
of the Navier--Stokes equations in a critical space in 3D, J.
Funct. Anal. 255 (2008) 2233--2247.

\bibitem{C97} M. Cannone, A generalization of a theorem by Kato on Navier--Stokes equations, Rev. Mat.
Iberoamericana 13 (1997) 515--541.

\bibitem{CV24} D. Chamorro, G. Vergara-Hermosilla, Lebesgue spaces with variable exponent: some applications to the Navier--Stokes equations, Positivity 28(2) (2024)  p. 24.

\bibitem{C92}  J.-Y. Chemin. Remarques sur l'existence globale pour le syst\`{e}me de Navier--Stokes incompressible, SIAM J. Math. Anal. 23(1) (1992) 20--28.


\bibitem{CPZ04}  L. Corrias, B. Perthame, H. Zaag, Global solutions of some chemotaxis and angiogenesis system in high space dimensions, Milan J. Math. 72 (2004)
1--28.

\bibitem{C03} D.V. Cruz-Uribe, The Hardy--Littlewood maximal operator on variable-$L^{p}$ spaces, in: Seminar of Mathematical Analysis (Malaga/Seville, 2002/2003),
147-156, Colecc. Abierta, 64, Univ. Sevilla Secr. Publ. Seville, 2003.

\bibitem{CF13} D.V. Cruz-Uribe, A. Fiorenza, \textit{Variable Lebesgue Spaces: Foundations and Harmonic Analysis}, Birkh\"{a}user/Springer, Heidelberg, 2013.

\bibitem{D04} L. Diening, Maximal function on generalized Lebesgue spaces $L^{p(\cdot)}(\mathbb{R}^{n})$, Math. Inequal. Appl. 7 (2004) 245--253.

\bibitem{DHHR11}  L. Diening, P. Harjulehto, P. H\"{a}st\"{o}, M. Ruzicka, \textit{Lebesgue and Sobolev Spaces with
Variable Exponents}, Lecture Notes in Mathematics 2017, Springer, Heidelberg, 2011.

\bibitem{FK64} H. Fujita, T. Kato,
On the Navier--Stokes initial value problem I, Arch. Rational Mech.
Anal. 16 (1964) 269--315.

\bibitem{I11} T. Iwabuchi, Global well-posedness for Keller--Segel system in Besov type spaces, J. Math. Anal. Appl. 379 (2011) 930--948.

\bibitem{IN13} T. Iwabuchi, M. Nakamura, Small solutions for nonlinear heat equations, the
Navier--Stokes equation and the Keller--Segel system in Besov and Triebel--Lizorkin
spaces, Adv. Differential Equations 18  (2013)  687--736.

\bibitem{IT14} T. Iwabuchi, R. Takada, Global well-posedness and ill-posedness for the Navier--Stokes equations with the Coriolis force in function spaces of Besov
type, J. Funct. Anal. 267(5) (2014) 1321--1337.

\bibitem{K84} T. Kato, Strong $L^{p}$ solutions of the Navier-Stokes equations in $\mathbb{R}^{m}$ with applications to weak
solutions, Math. Z. 187 (1984) 471--480.

\bibitem{KS70} E.F. Keller, L.A. Segel, Initiation of slime mold aggregation viewed as an instability, J. Theoret. Biol. 26 (1970) 399--415.

\bibitem{K09} H. Kempka, 2-Microlocal Besov and Triebel--Lizorkin spaces of variable integrability, Rev. Mat. Complut. 22(1)
(2009) 227--251.

\bibitem{KT01} H. Koch,  D. Tataru, Well-posedness for the Navier--Stokes equations, Adv. Math. 157
(2001)  22--35.

\bibitem{KY11} P. Konieczny, T. Yoneda, On dispersive effect of the Coriolis force for the stationary Navier--Stokes equations, J. Differential Equations 250(10) (2011)
3859--3873.

\bibitem{KR91} O. Kov\'{a}\v{c}ik, J. R\'{a}kosn\'{i}k, On spaces $L^{p(x)}$ and $W^{k, p(x)}$, Czechoslovak Math. J. 41(116) (1991) 592--618.

\bibitem{KS08} H. Kozono, Y. Sugiyama, Local existence and finite time blow-up of solutions in the 2-D Keller--Segel system, J. Evol. Equ. 8 (2008) 353--378.

\bibitem{L02} P.-G. Lemari\'{e}-Rieusset, \textit{Recent Developments in the Navier--Stokes Problem}, Research
Notes in Mathematics, Chapman \& Hall/CRC, 2002.

\bibitem{L34} J. Leray, Sur le mouvement  d'un  liquide  visqueux  emplissant l'espace, Acta
Math. 63 (1934) 193--248.

\bibitem{LW23} Y. Li, W. Wang, Finite-time blow-up and boundedness in a 2D Keller--Segel system with rotation, Nonlinearity
36(1) (2023) 287--318.

\bibitem{LYZ23}  J. Li, Y. Yu, W. Zhu, Ill-posedness issue on a multidimensional chemotaxis equations in the critical Besov spaces,  J. Geom. Anal. 33:84 (2023) 1--22.

\bibitem{N50}  H. Nakano, \textit{Modulared Semi-Ordered Linear Spaces}, Maruzen Co. Ltd., Tokyo, 1950.

\bibitem{N51} H. Nakano, \textit{Topology of Linear Topological Spaces}, Maruzen Co. Ltd., Tokyo, 1951.

\bibitem{NY20}  Y. Nie, J. Yuan, Well-posedness and ill-posedness of a multidimensional chemotaxis system in the
critical Besov spaces, Nonlinear Anal. 196 (2020) 111782.

 \bibitem{NY22} Y. Nie, J. Yuan, Ill-posedness issue for a multidimensional hyperbolic-parabolic model of chemotaxis
in critical Besov spaces $\dot{B}^{\frac{3}{2}}_{2n,1}\times (\dot{B}^{-\frac{1}{2}}_{2n,1})^{n}$, J. Math. Anal. Appl. 505(2) (2022) 125539.

\bibitem{OS10} T. Ogawa, S. Shimizu, End-point maximal regularity and wellposedness of
the two dimensional Keller--Segel system in a critical Besov space,
Math. Z.  264 (2010) 601--628.

\bibitem{O31} W. Orlicz, \"{U}ber konjugierte Exponentenfolgen, Studia Math. 3 (1931) 200--212.

\bibitem{P96} F. Planchon, Global strong solutions in Sobolev or Lebesgue spaces to the incompressible
Navier--Stokes equations in $\mathbb{R}^{3}$, Ann. Inst. H. Poincar\'{e} 13 (1996) 319--336.

\bibitem{R18} S. Ru, Global well-posedness of the incompressible Navier--Stokes
equations in function spaces with variable exponents (in Chinese), Sci. Sin. Math. 48 (2018) 1427--1442.


\bibitem{V241} G. Vergara-Hermosilla, Remarks on variable Lebesgue spaces and fractional Navier--Stokes equations, arXiv:2402.07508v1.

\bibitem{V242} G. Vergara-Hermosilla, On variable Lebesgue spaces and generalized nonlinear heat equations, arXiv:2404.09588v1.

\bibitem{VZ24} G. Vergara-Hermosilla, J. Zhao, On the Cauchy problem for the fractional Keller--Segel system in variable Lebesgue spaces, arXiv:2405.01209v1.

\bibitem{V09} J. Vyb\'{i}ral, Sobolev and Jawerth embeddings for spaces with variable smoothness and integrability, Ann. Acad. Sci.
Fenn. Math. 34 (2) (2009) 529--544.

\bibitem{W15} B. Wang,  Ill-posedness for the Navier--Stokes equations in critical Besov spaces $\dot{B}_{\infty,q}^{-1}$,  Adv. Math.  268  (2015) 350--372.

\bibitem{XF22}W. Xiao, X. Fei, Ill-posedness of a multidimensional chemotaxis system in the critical Besov spaces,
J. Math. Anal. Appl. 514 (2022)  126302.

\bibitem{Y10} T. Yoneda, Ill-posedness of the 3D Navier--Stokes equations in a generalized Besov space near $BMO^{-1}$, J. Funct. Anal. 258(10) (2010) 3376--3387.


\bibitem{ZZ19} W. Zhu, J. Zhao, The optimal temporal decay estimates for the micropolar
fluid system in negative Fourier--Besov spaces,  J. Math. Anal. Appl. 475 (2019) 154--172.
\end{thebibliography}
\end{document}